\documentclass[a4paper,twoside,12pt,reqno]{amsart}
\usepackage[utf8]{inputenc}
\usepackage[T1]{fontenc}
\usepackage[english]{babel}
\usepackage{tgpagella}
%\linespread{1.1}
\usepackage{hyperref}
\usepackage{amscd} 
\usepackage{csquotes}
\usepackage{fancyhdr}
\usepackage{multirow}
\usepackage{cleveref}
\usepackage{comment}

\usepackage{amsthm, amsfonts, amssymb, amsmath, latexsym, enumerate,array,color}
\usepackage{amscd} 
\usepackage[all]{xy}
\usepackage{pstricks,graphicx}
\usepackage{stmaryrd}
\usepackage{hyperref,mdwlist,mathrsfs}
\usepackage{tikz}
\usepackage{pgf,tikz}
\usetikzlibrary{arrows}

%\usepackage[style=alphabetic,backend=biber,maxbibnames=10]{biblatex}
%\addbibresource{biblio.bib}

%\pagestyle{empty}
%\pagestyle{fancy}
%\renewcommand{\chaptermark}[1]{\markboth{#1}{}}
%\renewcommand{\sectionmark}[1]{\markright{\thesection\ #1}}
%\fancyhf{}
% o que estava no original:
%\fancyhead[LE,RO]{\bfseries\thepage}
%\fancyhead[LO]{\bfseries\rightmark}
%\fancyhead[RE]{\bfseries\leftmark}
%\fancyfoot[LE,RO]{\thepage}
%\fancyhead[RO]{\textsc{\nouppercase{}}Pedro Macias Marques}
%\fancyhead[LO]{\textsc{\nouppercase{Geometria e AplicaÃ§Ãµes 2016/2017}}}
%\fancyfoot[LO]{{\nouppercase{}}}

%LINKS-------------------------------------
%(when commenting out, run again the 'index' command)
\usepackage{color}

\usepackage{amssymb}
\usepackage{amsmath}
\usepackage{amsthm}
\usepackage{paralist}

\newtheorem{theorem}{Theorem}[section]
\newtheorem{lemma}[theorem]{Lemma}
\newtheorem{question}[theorem]{Question}
\newtheorem{proposition}[theorem]{Proposition}
\newtheorem{corollary}[theorem]{Corollary}
 
\newtheorem*{proposition*}{Proposition}
\theoremstyle{definition}
\newtheorem{definition}[theorem]{Definition}
\newtheorem{example}[theorem]{Example}
\newtheorem{remark}[theorem]{Remark}
\newtheorem*{notation}{Notation}

%------------------------------------------------
%
% Special Names
%
%------------------------------------------------
\DeclareMathOperator{\Ann}{Ann}
\DeclareMathOperator{\coker}{coker}
\DeclareMathOperator{\Ext}{Ext}

\DeclareMathOperator{\Hom}{Hom}
\DeclareMathOperator{\im}{im}
\DeclareMathOperator{\Tor}{Tor}
\DeclareMathOperator{\Soc}{Soc}
\DeclareMathOperator{\reg}{reg}

%------------------------------------------------
%
%             Symbols in "Fraktur"
%
%------------------------------------------------
\newcommand{\fa}{\mathfrak a}
\newcommand{\fb}{\mathfrak b}
\newcommand{\fm}{\mathfrak m}

%---------------------------------
%
%        letter in math bold
%
%------------------------
\newcommand{\x}{\mathbf{x}}
\newcommand{\y}{\mathbf{y}}

%---------------------------------
%
%        letter with a tilde
%

\newcommand{\tA}{\tilde{A}}
\newcommand{\tfa}{\tilde{\fa}}
\newcommand{\tB}{\tilde{B}}
\newcommand{\tfb}{\tilde{\fb}}
\newcommand{\tC}{\tilde{C}}

\numberwithin{equation}{section}

\newcommand{\AS}[1]{
  {\color{magenta} Alexandra: #1}}
\newcommand{\na}[1]{
  {\color{cyan} #1}}

\title[Betti numbers for connected sums of graded AGA's]{Betti numbers for connected sums of graded Gorenstein Artinian algebras}

\author[]{Nasrin Altafi}
\address{Nasrin Altafi: Department of Mathematics, KTH Royal Institute of Technology, S-100 44 Stockholm, Sweden and Department of Mathematics, Queen's University, 505 Jeffery Hall, University Avenue, Queen's University, Kingston, Ontario, Canada K7L 3N6}
\email{nar3@queensu.ca}

\author[]{Roberta Di Gennaro}
\address{Roberta Di Gennaro: Dipartimento di Matematica e Applicazioni \lq\lq Renato Caccioppoli\rq\rq, Complesso Universitario Monte Sant'Angelo, 
 Universit\`{a} degli Studi di Napoli Federico II, Via Cinthia 
  80126 Napoli, Italy}
\email{digennar@unina.it}

\author[]{Federico Galetto}
\address{Federico Galetto: Department of Mathematics and Statistics, Cleveland State University, 2121 Euclid Avenue, RT 1515
Cleveland, OH 44115-2215, USA }
\email{f.galetto@csuohio.edu}

\author[]{Sean Grate}
\address{Sean Grate: Department of Mathematics and Statistics, Auburn University, 221 Parker Hall, Auburn, AL 36849, USA}
\email{sean.grate@auburn.edu}

  \author[]{Rosa M.\ Mir\'o-Roig}
  \address{Rosa Maria Mir\'o-Roig: Facultat de
  Matem\`atiques i Inform\`atica, Universitat de Barcelona, Gran Via des les
  Corts Catalanes 585, 08007 Barcelona, Spain} \email{miro@ub.edu,  ORCID 0000-0003-1375-6547}

 \author[]{Uwe Nagel} 
 \address{Uwe Nagel: Department of
  Mathematics, University of Kentucky, 715 Patterson Office Tower,
  Lexington, KY 40506-0027, USA}
  \email{uwe.nagel@uky.edu}
  
\author[]{Alexandra Seceleanu}
\address{Alexandra Seceleanu: Department of
  Mathematics, University of Nebraska-Lincoln, 203 Avery Hall, Lincoln, NE 68588, USA}
\email{aseceleanu@unl.edu}

\author[]{Junzo Watanabe}
\address{Department of Mathematics Tokai University, Hiratsuka, Kanagawa 259--1292, Japan}
\email{watanabe.junzo@tokai-u.jp}

\thanks{\hspace{-15pt}  Altafi was supported by Swedish Research Council grant VR2021-00472, Galetto was supported by NSF DMS--2200844, Mir\'o-Roig was partially supported by the grant PID2020-113674GB-I00, Nagel was partially supported by Simons Foundation grant \#636513,   Seceleanu was supported by NSF DMS--2101225.
}

\theoremstyle{definition}

\begin{document}
\maketitle
\begin{abstract} The connected sum construction, which takes as input Gorenstein rings and produces new Gorenstein rings, can be considered as an algebraic analogue for the topological construction having the same name. We determine the graded Betti numbers for connected sums of graded Artinian Gorenstein algebras. Along the way, we find the graded Betti numbers for fiber products of graded rings; an analogous result was obtained in the local case by Geller \cite{G}.  We relate the connected sum construction to the doubling construction, which also produces Gorenstein rings. Specifically, we show that a connected sum of doublings is the doubling of a fiber product ring. 

\end{abstract}

\section{Introduction} 

The connected sum is a topological construction that takes two manifolds to produce a new manifold \cite[p. 7]{Massey}. An algebraic analog of this surgery construction was introduced by H. Ananthnarayan, L. Avra\-mov, and W.F. Moore in their paper \cite{AAM} in the local case. In this paper, we elucidate some properties of this construction in the graded case. 

Let $A$ and $B$ be two graded Artinian Gorenstein (AG) $K$-algebras with the same socle degree $d$, let $T$ be an AG $K$-algebra of socle degree $k<d$, and suppose there are surjective maps $\pi_A\colon A\rightarrow T$, and $\pi_B\colon B\rightarrow T$.  From this data, one forms the fiber product algebra $A\times_TB$ as the categorical pullback of $\pi_A,\pi_B$; the connected sum algebra $A\#_TB$ is the quotient of $A\times_TB$ by a certain principal ideal $\langle (\tau_A,\tau_B)\rangle\subset A\times_TB$.  The connected sum is again an AG $K$-algebra (see Definition \ref{Def_CS}). 
As mentioned, this algebraic connected sum operation for local Gorenstein algebras $A,B$ over a local Cohen-Macaulay algebra $T$ was introduced in \cite{AAM}.

In \cite{G} and  \cite{CGS}, the authors determined the minimal free resolution of a two-factor fiber product $A\times_TB$ of local rings. 
In this paper, we extend  their work to the setting of fiber products of graded rings and generalize it to fiber products involving multiple factors. We also consider connected sums with multiple sumands and we answer the following question:
\begin{question}\label{Q1} Fix $A_1, \ldots, A_r$ graded AG $K$-algebras with the same socle degree.
    What are the graded Betti numbers of their fiber product over $K$?
What are the graded Betti numbers of their connected sum over $K$?
 \end{question}

 Our first series of main results answers the above question. For specific formulas we refer the reader to Theorem \ref{thm:Betti fiber product}, Theorem \ref{thm:Betti conn sum}, Theorem \ref{thm:Betti multifactor conn sum}, Theorem \ref{thm:Betti multifactor conn sum} and their corollaries.

 Celikbas, Laxmi and Weyman solved a particular case of Question \ref{Q1} in \cite[Corollary 6.3]{CLW}. Specifically, they determined  a minimal free resolution of the connected sum of $K$-algebras $A_i
:= K[x_i]/(x_i^{d_i})$ by using the doubling construction (see section \ref{s:doubling}). A second goal of this paper is to  generalize their result and investigate conditions for a connected sum of  AG $K$-algebras $A_1, \dots , A_r$ with the same socle degree to be a doubling. More precisely we ask:

\begin{question} \label{Q2}
 Assume that $A_1,\ldots,A_r$ are graded AG $K$-algebras with the same socle degree. Is the connected sum $A=A_1 \# _K \cdots \# _KA_r$ a doubling? More precisely: if $A_i$ is a doubling of $\Tilde{A_i}$, is $A$ a doubling of $\Tilde{A_1} \times_K\cdots \times_K \Tilde{A_r}$?
\end{question}
We answer the above question in the affirmative in Theorem \ref{thm:doubling}.

Our paper is structured as follows: section \ref{s: background} introduces the necessary background and develops the basic properties of multi-factor fiber products and connected sums, section \ref{sec:Betti numbers} computes the graded Betti numbers for multi-factor fiber products and connected sums, and section \ref{sec:doubling}  analyzes connected sums that arise as doublings of certain fiber products.

\noindent \textbf{Acknowledgement.} The project got started at the  meeting ``Workshop on Lefschetz Properties in Algebra, Geometry, Topology and Combinatorics'', held at the Fields Institute in Toronto, Canada, May 15--19, 2023. The authors would like to thank the Fields Institute and the organizers for the invitation and financial support. Additionally, we thank Graham Denham for asking a question which motivated our work, and Mats Boij for useful discussions.

\section{background}
\label{s: background}
In this section, we fix some notation and recall some basic facts on Artinian Gorenstein (AG) algebras, fiber products, connected sums of graded Artinian algebras, as well as on Macaulay dual generators needed in the sequel.
\smallskip

\subsection{Oriented AG algebras} Throughout this paper, $K$ is an arbitrary field. Given a graded $K$-algebra $A$, its homogeneous maximal ideal is $m_A =\oplus _{i\ge 1} A_i$. A $K$-algebra $A$ is called {\em Artinian} if it is a finite dimensional vector space over $K$. The {\em socle} of an Artinian $K$-algebra $A$ is the ideal $(0 : m_A)$; its {\em socle degree} is the largest integer $d$ such that $A_d\ne 0$. The socle degree of an Artinian $K$-algebra agrees with its Castelnuovo-Mumford regularity, which is denoted by $\reg(A)$. The {\em type} of $A$ is the vector space dimension of its socle. 

The {\em Hilbert series} of a graded  $K$-algebra $ A$ is the generating function $H_A( t) =\sum _{i\ge 0} \dim(A_i)t^i$.
 The {\em Hilbert function} $HF_A$ of a $K$-algebra $A$ is the sequence of coefficients of its
Hilbert series. 

Suppose that $A$ has a presentation $A=R/I$ as a quotient of a graded $K$-algebra $R$. 
The graded Betti numbers of $A$ over $R$ are the integers $\beta_{ij}^R(A)=\dim_K \Tor^R_i(A,K)_j$. These homological invariants are our main focus. The graded Poincar\'{e} series of $A$ over $R$ is the generating function $P^R_A(t,s)=\sum_{i,j} \beta_{ij}^R(A) t^is^j$. If $R$ is regular, then the Poincar\'{e} series is in fact a polynomial.

A graded Artinian $K$-algebra $A$  with socle degree $d$ is said to be  {\em Gorenstein} if its
socle $(0:m_A)$ is a one dimensional $K$-vector space. For any Artinian Gorenstein   graded $K$-algebra  $A$ with socle degree $d$  and for any
non-zero morphism of graded vector spaces $f_A:A\to K(-d)$, known as  an orientation of $A$,
there is a pairing    
\begin{equation}\label{pairing}
A_{i}\times A_{d-i}\to K \text{ defined by } (a_i,a_{d-i})\mapsto f_A(a_ia_{d-i})
\end{equation}
 which 
is non-degenerate. We call the pair $(A,f_A)$ an 
{\em oriented AG $K$-algebra}.

\begin{definition}[{\cite[Lemma 2.1]{IMS}}] Let $(A,f_A)$ and $(T,f_T)$
be two oriented AG $K$-algebras with $\reg(A)=d$ and $\reg(T)=k$, and let $\pi: A \to T$ be a graded map. There exists a unique homogeneous element $\tau_A \in A_{d-k}$  such that $f_A(\tau a)=f_T(\pi (a))$ for all $a\in A$; we call it the {\em Thom class} for $\pi : A \to  T$. 
\end{definition}

\begin{remark}
\label{rem: Thom class}
Restating \cite[Remark 2.8]{IMS}, we have that $\tau_A$ is the image of $1\in T$ under the composite map $T(-k)\cong \Ext^n(T,Q)\to \Ext^n(A,Q)\cong A(-d)$, where the middle map is $\Ext^n(\pi,Q)$.
\end{remark}

\begin{example}
 Let $(A,f_A)$ be an oriented AG $K$-algebra with socle degree $\reg(A)=d$. Consider $(K,f_K)$ where $f_K:K\to K$ is the identity map. Then the Thom class for the canonical projection $\pi:A\to K$ is the unique element $s\in A_d$ such that $f_A(s)=1$.
\end{example}

Note
that the  Thom class for $\pi : A \to  T$ depends not only on the map $\pi $, but also on the orientations chosen for $A$ and $T$.

\subsection{Macaulay dual generators} 
\label{sect:MDG}
Let $Q=K[x_1,\ldots,x_n]$ be a polynomial ring and let $Q'=K[X_1,\ldots,X_n]$ be a divided power algebra, regarded as a $Q$-module with the contraction action 
\[
x_i\circ X_j^k=\begin{cases}X_j^{k-1}\delta_{ij} & \text{if} \ k>0\\ 0 & \text{otherwise}\\ \end{cases}
\]
where $\delta_{ij}$ is the Kronecker delta. We regard $Q$ as a graded $K$-algebra with $\deg X_i=\deg x_i$.

For each degree $i\geq 0$, the action of $Q$ on $Q'$ defines a non-degenerate $K$-bilinear pairing 
\begin{equation}
\label{eq:MDPairing}
    Q_i \times Q'_i \longrightarrow K \text{ with } (f,F) \longmapsto f \circ F.
\end{equation}
This implies that for each $i\geq 0$ we have an isomorphism of $K$-vector spaces $Q'_i\cong \Hom_K(Q_i,K)$ given by $F\mapsto\left\{f\mapsto f\circ F\right\}$.

 It is a classical result of Macaulay \cite{Macaulay} (cf. \cite[Lemma 2.14]{IK}) that an Artinian $K$-algebra $A=Q/I$ is Gorenstein with socle degree $d$ if and only if  $I=\Ann_Q(F)=\{f\in Q\mid f\circ F=0\}$ for some homogeneous polynomial $F\in Q'_d$.  Moreover, this polynomial, termed a {\em Macaulay dual generator} for $A$, is unique up to a scalar multiple. 

A choice of orientation on $A$ corresponds to a choice of Macaulay dual generator. Every orientation on $A$ can be written as the function $f_A:A\to K$ defined by $f_A(g)\mapsto (g\circ F)(0)$ for some Macaulay dual generator $F$ of $A$ (the notation $(g\circ F)(0)$ refers to evaluating the element $g\circ F$ of $Q'$ at $X_i=0$).

\subsection{Fiber product} 
We start by recalling the definition of the fiber product.

\begin{definition} Let $A$, $B$ and $T$ be graded $K$-algebras  and   $\pi _A : A \to 
T$ and $\pi _B : B \to  T$  morphisms of graded $K$-algebras. We define the {\em fiber product} of $A$ and $B$ over $T$ as the
graded $K$-subalgebra of $A\oplus B$:
$$
A \times _T B = \{(a, b) \in A\oplus B \mid  \pi_A(a) = \pi _B(b) \} .
$$
\end{definition}

If $\pi _A$ and $\pi _B$ are surjective, then there is a degree-preserving exact sequence
\begin{equation}\label{exactFP}
    0\to A\times _T B\to A\oplus B \to T \to 0
\end{equation}
which allows to compute the Hilbert series of the fiber product as
\begin{equation}\label{HilbertFP}
 HF_{A \times _T B}( t) = HF_A( t) + HF_B( t)  -  HF_T( t).
\end{equation}

While presentations of arbitrary fiber products can be unruly, the case $T=K$ is best-behaved.

\begin{lemma}\label{lem:fiber product over K}
Let $R = K[x_1,\ldots,x_m]$ and $S = K[y_1,\ldots,y_n]$ be polynomial rings over  $K$ with homogeneous maximal ideals $\x = (x_1,\ldots,x_m)$ and $\y = (y_1,\ldots,y_n)$, respectively. Let  $Q = R \otimes_K S = K[x_1,\ldots,x_m, y_1,\ldots,y_n]$. If $A = R/\fa$ and $B = S/\fb$ have canonical projections $\pi_A:A\to K$ and $\pi_B:B\to K$, then the fiber product over $K$ has presentation
\begin{equation}\label{eq:fiber product presentation}
A\times_K B=\frac{Q}{\x \cap \y  +\fa \ +\fb },
\end{equation}
where in \eqref{eq:fiber product presentation} $\fa, \fb, \x,\y$ denote extensions of the respective ideals to $Q$. 
In particular, if $A$ and $B$ are graded, then $A\times_K B$ is a bigraded algebra with 
\[
[A\times_K B]_{(i,j)}=\begin{cases}
K & \text{ if } (i,j)= (0,0), \\
A_i\oplus B_j & \text{ if } (i,j)\neq (0,0).
\end{cases}
\]
\end{lemma}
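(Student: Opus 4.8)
The plan is to verify the presentation directly from the universal property of the fiber product, then read off the bigrading. First I would observe that $A \times_K B$ is by definition the subalgebra of $A \oplus B$ consisting of pairs $(a,b)$ with $\pi_A(a) = \pi_B(b)$ in $K$; since $\pi_A$ and $\pi_B$ are the canonical projections killing the respective maximal ideals, this condition says precisely that the degree-$0$ components of $a$ and $b$ agree. Thus as a $K$-vector space $A \times_K B \cong K \oplus m_A \oplus m_B$, where the copy of $K$ is the diagonal in $A_0 \oplus B_0$ and $m_A, m_B$ are the homogeneous maximal ideals. This immediately gives the claimed formula for the bigraded pieces $[A\times_K B]_{(i,j)}$, once we know the presentation realizes exactly this vector space structure with the bigrading where $x$-variables carry degree $(*,0)$ and $y$-variables carry degree $(0,*)$.

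Next I would identify the ring $Q/(\x\cap\y + \fa + \fb)$. Working in $Q = K[x_1,\dots,x_m,y_1,\dots,y_n]$, the ideal $\x \cap \y$ is generated by all products $x_i y_j$, so modulo $\x\cap\y$ every monomial involving both an $x$ and a $y$ vanishes; hence $Q/(\x\cap\y)$ has $K$-basis given by $1$ together with the pure $x$-monomials of positive degree and the pure $y$-monomials of positive degree, i.e. $Q/(\x\cap\y) \cong K \oplus \x R \oplus \y S$ as graded vector spaces (identifying $\x R$ with the positive-degree part of $R$, etc.). Further quotienting by the extensions of $\fa$ and $\fb$ then yields $K \oplus m_A \oplus m_B$, since $\fa \subseteq \x R$ and $\fb \subseteq \y S$ act independently on the two summands and do not interact (any element of $\fa$ extended to $Q$ lands in $\x R + \x\cap\y$). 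This matches the vector space description from the first paragraph.

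To finish, I would construct the isomorphism explicitly rather than argue only on dimensions. Define a $K$-algebra map $\varphi\colon Q \to A \times_K B$ by sending $x_i \mapsto (\bar{x_i}, 0)$ and $y_j \mapsto (0, \bar{y_j})$; this is well-defined because $(\bar{x_i},0)\cdot(0,\bar{y_j}) = (0,0)$ and $(\bar{x_i},0),(0,\bar{y_j})$ indeed lie in the fiber product (their degree-$0$ parts are $0$). One checks $\varphi$ is surjective — the image contains $(1,1)$ and all $(\bar x_i,0),(0,\bar y_j)$, which generate $A\times_K B$ — and that $\x\cap\y + \fa + \fb \subseteq \ker\varphi$: products $x_iy_j$ go to $0$, generators of $\fa$ go to $(0,0)$ since they vanish in $A$, similarly for $\fb$. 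The reverse inclusion $\ker\varphi \subseteq \x\cap\y+\fa+\fb$ follows by reducing an arbitrary element modulo $\x\cap\y+\fa+\fb$ to a $K$-linear combination of $1$, pure $x$-monomials, and pure $y$-monomials, and noting such a combination maps to $0$ only if all coefficients vanish, using that $A = R/\fa$ and $B = S/\fb$ are the honest quotients.

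The main obstacle is the bookkeeping in the reverse inclusion $\ker\varphi\subseteq \x\cap\y+\fa+\fb$: one must be careful that the extensions of $\fa$ and $\fb$ to $Q$ really do cut things down to exactly $m_A$ and $m_B$ on the two summands and produce no further relations mixing the two. This is where the hypothesis that $\pi_A,\pi_B$ are the \emph{canonical} projections (so $T = K$ and the only identification forced is on the degree-$0$ part) is essential; for general $T$ the analogous computation fails, which is exactly why the lemma is stated only for $T = K$. The bigrading statement is then a formal consequence: $Q$ is bigraded with $\deg x_i = (\deg x_i, 0)$ and $\deg y_j = (0,\deg y_j)$, the ideal $\x\cap\y+\fa+\fb$ is bihomogeneous, and the computation above shows the graded pieces are as claimed.
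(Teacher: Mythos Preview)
Your argument is correct. The explicit map $\varphi\colon Q \to A\times_K B$, the verification of surjectivity, and the two inclusions for $\ker\varphi$ are all sound; the reduction step for $\ker\varphi\subseteq \x\cap\y+\fa+\fb$ works exactly because modulo $\x\cap\y$ the algebra $Q$ splits as $K\oplus m_R\oplus m_S$ and the extensions of $\fa,\fb$ only touch their respective summands (here you implicitly use $\fa\subseteq\x$, $\fb\subseteq\y$, which is automatic since $A,B$ are nonzero $K$-algebras).

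The paper's own proof takes a different, much shorter route: it does not argue the presentation at all but simply cites \cite[Proposition~3.12]{IMS}, then observes that the defining ideal is bihomogeneous for the natural bigrading of $Q$, and finally reads off the graded pieces from the short exact sequence $0\to A\times_K B\to A\oplus B\to K\to 0$ interpreted as a sequence of bigraded vector spaces. Your approach is more elementary and self-contained---you actually \emph{prove} the presentation rather than importing it---while the paper's approach is terser and leans on the exact sequence~\eqref{exactFP} rather than a direct basis computation for the graded components. Both reach the same conclusion; yours has the advantage of not depending on an external reference, and the explicit $\varphi$ you wrote down is also what underlies the $r$-factor generalization in Lemma~\ref{lem: multifactor FP presentation}.
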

\begin{proof}
The presentation of the fiber product is given in \cite[Proposition 3.12]{IMS}. The fact that the fiber product is bigraded follows from noticing that the relations in \eqref{eq:fiber product presentation} are homogeneous with respect to natural  bigrading of $Q$. Finally, the formula for the graded components of  $A\times_K B$ follows from \eqref{exactFP}, which can be interpreted as an exact sequence of bigraded vector spaces.
\end{proof}

\begin{example}\label{ex1} Consider the standard graded complete intersection algebras 
\[
A = \frac{K[x,y,z]}{(x^3,y^4,z^4)} \ \text{ and } B = \frac{K[u,v]}{(u^5,v^5)}.
\]
 Their Hilbert functions are given by
\begin{eqnarray*}
HF_A &=& (1,3,6,9, 10, 9,6,3,1) \ \text{ and }\\
HF_B &=& (1,2,3,4,5 , 4,3,2,1). 
\end{eqnarray*}
Set $R=K[x,y,z]$ and $S=K[u,v]$. The minimal free resolutions of $A$ and $B$ are the Koszul complexes
$$
0 \to R(-11) \to R(-7)^2\oplus R(-8)\to R(-4)^2\oplus R(-3)\to R \to A \to 0,
$$
and
$$ 0\to S(-10)\to S(-5)^2\to S \to B \to 0.
$$
The fiber product $C=A\times _K B$ of $A$ and $B$ and its Hilbert function are
$$C= K[x,y,z,u,v]/(xu,xv,yu,yv,zu,zv,x^3,y^4,z^4,u^5,v^5),$$
and
$$HF_C=(1,5, 9, 13, 15, 13, 9, 5,2).$$
%$$
%0\to R(-13)^2 \to RS(-5) \oplus RS(-8)^2\oplus RS(-9)^2 \oplus RS(-10)\oplus RS(-12)^5 \to
%$$
%$$
%RS(-4)^5\oplus RS(-5)  \oplus RS(-6)^2 \oplus RS(-7)^6\oplus RS(-8)^4 \oplus RS(-9)^2\oplus %RS(-11)^4 \to
%$$
%$$ RS(-3)^9\oplus RS(-4)^2  \oplus RS(-5)^4 \oplus RS(-6)^6\oplus RS(-7)^2 \oplus RS(-8)\oplus %RS(-10) 
%$$
%$$\to RS(-2)^6\oplus RS(-3) \oplus RS(-4)^2 \oplus  RS(-5)^2\to RS\to C \to 0$$
The Betti table of $C$ as a $K[x,y,z,u,v]$-module is shown in \Cref{tab: betti1}.
\begin{table}[ht]
    \centering
$$    \begin{tabular}{c|cccccc}
        & 0 & 1 & 2&3&4&5\\
       \hline
       total & 1 & 11 & 25&24&11&2\\
       \hline
       0: & 1 & . & . & . & . & .\\
      1: & . & 6 & 9 & 5 & 1 & .\\
      2: & . & 1 & 2 & 1 & . & .\\
      3: & . & 2 & 4 & 2 & . & .\\
      4: & . & 2 & 6 & 6 & 2 & .\\
      5: & . & . & 2 & 4 & 2 & .\\
      6: & . & . & 1 & 2 & 1 & .\\
      7: & . & . & . & . & . & .\\
      8: & . & . & 1 & 4 & 5 & 2
% Fred: these below appear to be incorrect, I replaced with the ones above      
%       0: & 1 & .  & .& .&.&. \\
%       1: & . & 6 & 9&5&1&. \\
%       2:& . & 1 & 2&1&.&.\\
%       3: & . & 2 & 4&2&.&. \\
%       5: & .& 2&6 & 6&2&. \\
%       6:&.&.&2&4&2&.\\
%       7:&.&.&1&2&1&.\\
%       8:&.&.&1&4&5&2
    \end{tabular}
  $$
  \caption{Betti table of $C$ in \Cref{ex1}}
    \label{tab: betti1}
\end{table}

  Note that $C$ is an Artinian level $K$-algebra of type 2, i.e., all the elements of its socle have the same degree and the socle has dimension 2.
\end{example}

Recall that an Artinian $K$-algebra $A$ has the {\em strong Lefschetz property} (SLP) if there exists a linear form $\ell$ such that the multiplication map
$\times \ell^k : A_i \rightarrow A_{i+k}$
has maximal rank (i.e, it is injective or surjective) for all $i$ and $k$. It is known that if $A$ and $B$ are two AG $K$-algebras with the same socle degree, and both have the SLP, then $A \times_K B$ also has the SLP \cite[Proposition 5.6]{IMS}.

We also consider multi-factor fiber products, which we now define.

\begin{definition} 
\label{def: multi factor FP}
Let $A_1,\ldots, A_r$ and $T$ be graded $K$-algebras  and let  $\pi _i : A_i \to 
T$  morphisms of graded $K$-algebras. We define the  {\em fiber product} of $A_1,\dots,A_r$ over $T$ as
\begin{multline}
    A_1\times_T  \cdots \times_T A_r=\\
    \{(a_1, \ldots, a_r) \in A_1\oplus \cdots \oplus A_r \mid  \pi_i(a_i) = \pi_j(a_j), 1\leq i,j\leq r\} .
\end{multline}
\end{definition}

\begin{remark}\label{rem:multi-factor FP as iterated FP}
The multi-factor fiber product construction coincides with i\-te\-ra\-tively applying the two-factor fiber product construction to the list $A_1,\ldots, A_r$ a total of $r-1$ times, that is
\[
A_1\times_T  \cdots \times_T A_r=\left((A_1\times_T A_2)\times_T \cdots \right )\times_T A_r.
\]
\end{remark}

 We will need the following generalizations of equations \eqref{exactFP} and  \eqref{eq:fiber product presentation}, which describe a presentation for fiber products with arbitrary many summands over the residue field. 

\begin{lemma} \label{lem: multifactor FP presentation}
Let $R_1,\dots, R_r$ be polynomial rings over $K$ with maximal ideals $\mathbf{x}_1,\dots,\mathbf{x}_r$,
and let $Q=R_1\otimes_K \dots \otimes_K R_r$.
Suppose $A_i = R_i / \fa_i$, for some homogeneous ideal $\fa_i$ of $R_i$.
For $r\geq 2$, $A_1 \times_K \cdots \times_K A_r \cong Q/J$
with 
\[
J = \fa_1 +\cdots+\fa_r +  \sum_{1\leq i \neq j\leq r} (\x_i \cap \x_j )
\]
and there is an exact sequence of graded $Q$-modules
 \begin{equation}
 \label{exactFPr}
0 \to A_1\times_K\cdots \times_KA_r \to A_1\oplus\cdots \oplus A_r \to K^{r-1} \to 0. 
\end{equation}
\end{lemma}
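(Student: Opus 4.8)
The plan is to prove both assertions by induction on $r$, using the two-factor fiber product results (Lemma \ref{lem:fiber product over K} and the exact sequence \eqref{exactFP}) as the base case and leveraging Remark \ref{rem:multi-factor FP as iterated FP} to reduce the inductive step to a single two-factor fiber product.

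First I would establish the presentation. The base case $r=2$ is exactly Lemma \ref{lem:fiber product over K}. For the inductive step, write $C = A_1 \times_K \cdots \times_K A_{r-1}$, which by the inductive hypothesis has presentation $C \cong Q'/J'$ where $Q' = R_1 \otimes_K \cdots \otimes_K R_{r-1}$ and $J' = \fa_1 + \cdots + \fa_{r-1} + \sum_{1\le i\ne j\le r-1}(\x_i \cap \x_j)$. By Remark \ref{rem:multi-factor FP as iterated FP}, $A_1\times_K\cdots\times_K A_r = C \times_K A_r$, so I apply Lemma \ref{lem:fiber product over K} with $C$ in the role of $A$ and $A_r$ in the role of $B$; the polynomial ring presenting $C$ is $Q'$ with maximal ideal $\x_1 + \cdots + \x_{r-1}$ (the image of $\fm_{Q'}$), and the polynomial ring presenting $A_r$ is $R_r$ with maximal ideal $\x_r$. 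The key observation is that the ``cross-term'' ideal from the two-factor formula, namely $(\x_1+\cdots+\x_{r-1})\cap \x_r$, is generated (modulo the already-present monomial relations in $J'$) by the products $x \cdot y$ with $x$ a variable of some $R_i$ ($i<r$) and $y$ a variable of $R_r$; this is exactly $\sum_{i<r}(\x_i \cap \x_r)$ after extension to $Q$. Combining this with $J'$ and $\fa_r$ yields $J$ as claimed. The one point requiring care — and I expect this to be the main obstacle — is checking that $(\x_1+\cdots+\x_{r-1})\cap \x_r$, computed in the quotient $C$ (where many monomials already vanish), lifts back to precisely $\sum_{i<r}(\x_i\cap\x_r)$ in $Q$ and does not pick up or lose generators; this amounts to a careful bookkeeping argument with monomial ideals, using that $\x_i\cap\x_j = \x_i\x_j$ for $i\ne j$ in a tensor product of polynomial rings on disjoint variable sets.

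For the exact sequence \eqref{exactFPr}, I would again induct, using \eqref{exactFP} for the base case. With $C = A_1\times_K\cdots\times_K A_{r-1}$ and the inductive hypothesis giving $0 \to C \to A_1\oplus\cdots\oplus A_{r-1}\to K^{r-2}\to 0$, I apply \eqref{exactFP} to the two-factor fiber product $C\times_K A_r$ to get $0 \to C\times_K A_r \to C \oplus A_r \to K \to 0$. Splicing these two short exact sequences — or more directly, observing that $\dim_K(A_1\oplus\cdots\oplus A_r) - \dim_K(A_1\times_K\cdots\times_K A_r) = r-1$ in each graded degree except degree $0$, where the fiber product contributes a single copy of $K$ matching one summand — produces the desired four-term exact sequence with $K^{r-1}$ in the penultimate spot. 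Concretely, the surjection $A_1\oplus\cdots\oplus A_r \to K^{r-1}$ can be taken to be $(a_1,\ldots,a_r)\mapsto (\pi_1(a_1)-\pi_2(a_2), \pi_2(a_2)-\pi_3(a_3), \ldots, \pi_{r-1}(a_{r-1})-\pi_r(a_r))$, whose kernel is visibly the multi-factor fiber product by Definition \ref{def: multi factor FP}, and surjectivity follows since each $\pi_i$ is surjective. This last description makes the exact sequence transparent and independent of the induction, so I would likely present it directly and use the inductive/iterated viewpoint only for the presentation statement.

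Finally I would note that the bigrading present in the two-factor case (Lemma \ref{lem:fiber product over K}) generalizes to a multigrading by $\mathbb{Z}^r$, with $[A_1\times_K\cdots\times_K A_r]_{(i_1,\ldots,i_r)}$ vanishing unless at most one $i_k$ is nonzero, but since the statement as written only asserts the presentation and the exact sequence, this remark is not strictly needed for the proof.
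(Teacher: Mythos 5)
Your proposal is correct and follows essentially the same route as the paper: induction on $r$ using the two-factor presentation of Lemma \ref{lem:fiber product over K} together with Remark \ref{rem:multi-factor FP as iterated FP}, and a direct description of the surjection $A_1\oplus\cdots\oplus A_r\to K^{r-1}$ by differences of the projections $\pi_i$ (the paper subtracts $\pi_1(a_1)$ from each coordinate rather than taking consecutive differences, which is immaterial). The step you flag as the main obstacle is not one: the two-factor lemma is applied at the level of the ambient polynomial rings $Q'$ and $R_r$, so the cross term is the ideal $(\x_1+\cdots+\x_{r-1})\cap\x_r$ of $Q$ itself, and the identity $(\x_1+\cdots+\x_{r-1})\cap\x_r=\sum_{i<r}(\x_i\cap\x_r)$ is an immediate monomial computation on disjoint variable sets, with no interference from $J'$.
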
 

\begin{proof}
This follows by induction on $r$ with \eqref{eq:fiber product presentation} settling the case $r=2$. 

Setting $Q'=R_1\otimes_K \cdots \otimes_K R_{r-1}$, consider the ideal of $Q'$
\[
J'= \fa_1+\cdots+\fa_{r-1}+  \sum_{1\leq i < j\leq r-1} (\x_i \cap \x_j ).
\]
Applying \eqref{eq:fiber product presentation} to $A_1 \times_K \cdots \times_K A_{r-1} \cong Q'/J'$, we get
\begin{equation*}
A_1 \times_K \cdots \times_K A_r \cong Q'/J'  \times_K R_i/\fa_r \cong Q/J,
\end{equation*}
where the ideal $J$ is given by 
\begin{eqnarray*}
J &=& J'+\fa_r+\x_r\cap (\x_1+\cdots +\x_{r-1}) \\
 &=&  \fa_1+\cdots+\fa_{r-1} + \fa_r +  \sum_{1\leq i < j\leq r-1} (\x_i \cap \x_j ) + \sum_{1\leq i \leq r-1} (\x_i \cap \x_r )\\
 &=& \fa_1 +\cdots+\fa_r +  \sum_{1\leq i \neq j\leq r} (\x_i \cap \x_j ). 
\end{eqnarray*}
This yields the claimed presentation. 

By Definition \ref{def: multi factor FP}, we have that $A_1\times_K\cdots \times_KA_r$ is a $K$-subalgebra of $A_1\oplus\cdots \oplus A_r$. The inclusion $A_1\times_K\cdots \times_KA_r\subseteq A_1\oplus\cdots \oplus A_r$ gives the first nonzero map in \eqref{exactFPr}, while the second map can be defined by 
\[
(a_1,\ldots, a_r)\mapsto (\pi_2(a_2)-\pi_1(a_1), \pi_3(a_3)-\pi_1(a_1), \ldots, \pi_r(a_r)-\pi_1(a_1)),
\]
where the maps $\pi_i:A_i\to K$ are the canonical projections.
The claim regarding exactness of  \eqref{exactFPr} follows from \Cref{def: multi factor FP}.
\end{proof}

\subsection{Connected sum} \label{setup CS}
%\begin{setup}
% Fred: setup environment undefined
Let $(A,f_A)$, $(B,f_B)$ and $(T,f_B)$
be oriented AG $K$-algebras with
 $\reg(A)=\reg(B)=d$ and $\reg(T)=k$ and let 
$\pi_A : A \to  T$ and $\pi_B : B \to T$ be surjective graded $K$-algebra morphisms with Thom classes $\tau_A\in A_{d-k}$ and $\tau_B\in B_{d-k}$, respectively.
We assume that $\pi_A(\tau_A) = \pi_B(\tau_B)$, so that $(\tau_A,\tau_B) \in A \times _T B$.
%\end{setup}

\begin{definition}\label{Def_CS}
    The {\em connected sum} of the oriented AG $K$-algebras $A$ and $B$ over $T$ is the quotient ring of the fiber product $A\times _T B$ by the principal
ideal generated by the pair of Thom classes $(\tau _A, \tau _B)$, i.e.
$$ A \#  _TB =
(A \times _T B)/
\langle (\tau _A, \tau _B) \rangle .$$
\end{definition}

Note that this definition depends on $\pi_A$, $ \pi_B$ and the orientations on $A$ and  $B$.

By \cite[Lemma 3.7]{IMS} the connected sum is  characterized by the following exact sequence of vector spaces:
\begin{equation}\label{exactCS}
    0 \to  T(k - d) \to  A \times _T B \to  A\# _T B \to  0.
\end{equation}

Therefore, the Hilbert series of the connected sum satisfies
\begin{equation}\label{HilbertCS}
    HF_{A\# _T B}( t) = HF_A( t) + HF_B( t)- (1 + t^{d-k})HF_T( t).
\end{equation}

We recall the following characterization of connected sums.

\begin{theorem}[{\cite[Theorem 4.6]{IMS}}] \label{thm: dual generator CS}
Let $Q=K[x_1,\ldots,x_n]$ be a polynomial ring, and let $Q'=K[X_1,\ldots,X_n]$ be its dual ring (a divided power algebra).
	Let $F,G\in Q'_d$ be two linearly independent homogeneous forms of degree $d$, and suppose that there exists $\tau\in Q_{d-k}$ (for some $k<d$) satisfying
	\begin{enumerate}[(a)]
		\item $\tau\circ F=\tau\circ G\neq 0$, and 
		\item $\Ann(\tau\circ F=\tau\circ G)=\Ann(F)+\Ann(G)$.
	\end{enumerate} 
	Define the oriented AG $K$- algebras 
	$$A=\frac{Q}{\Ann(F)}, \ B=\frac{Q}{\Ann(G)}, \ T=\frac{Q}{\Ann(\tau\circ F=\tau\circ G)},$$
	and let $\pi_A\colon A\rightarrow T$ and $\pi_B\colon B\rightarrow T$ be the natural projection maps. Then there are $K$-algebra isomorphisms
	\begin{equation}\label{eq: conn sum Macaulay dual}
 A\times_TB\cong \frac{Q}{\Ann(F)\cap\Ann(G)}, \ \ A\#_TB\cong \frac{Q}{\Ann(F-G)}.
 \end{equation}
	Conversely, every connected sum $A\#_T B$ of graded AG $K$- algebras with the same socle degree over a graded AG $K$-algebra $T$ arises in this way.
\end{theorem}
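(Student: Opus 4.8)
The plan is to carry out everything inside the single polynomial ring $Q$, using the Macaulay duality machinery of \S\ref{sect:MDG}. For the forward direction I would first record the elementary consequences of (a): since $\tau\circ F\in Q\circ F$ and $\tau\circ F=\tau\circ G\in Q\circ G$, one has $\Ann(F),\Ann(G)\subseteq\Ann(\tau\circ F)$, so the projections $\pi_A,\pi_B$ are well defined. With the orientations $f_A,f_B,f_T$ coming from $F,G,\tau\circ F$ as in \S\ref{sect:MDG}, unwinding the defining property $f_A(\tau_A a)=f_T(\pi_A a)$ shows directly that $\tau_A$ (resp. $\tau_B$) is just the image of $\tau\in Q$ in $A$ (resp. $B$); hence $\pi_A(\tau_A)=\pi_B(\tau_B)$ is the image of $\tau$ in $T$ and the connected sum is defined. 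The crucial upshot is that a single element $\tau\in Q$ represents both Thom classes.

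Next I would identify $A\times_TB$ with $Q/(\Ann(F)\cap\Ann(G))$ by a Chinese Remainder argument: the map $\delta\colon Q\to A\oplus B$, $q\mapsto(q+\Ann(F),\,q+\Ann(G))$, has image inside $A\times_TB$ and kernel $\Ann(F)\cap\Ann(G)$, and it surjects onto $A\times_TB$ precisely because (b) guarantees that two elements of $Q$ which agree in $T$ differ by an element of $\Ann(F)+\Ann(G)$ and can therefore be corrected to a common preimage. Since $(\tau_A,\tau_B)=\delta(\tau)$ is the class of $\tau$, passing to the quotient yields $A\#_TB\cong Q/\big((\Ann(F)\cap\Ann(G))+(\tau)\big)$.

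It then remains to prove the equality of ideals $(\Ann(F)\cap\Ann(G))+(\tau)=\Ann(F-G)$, which is the technical heart of the matter. The inclusion $\subseteq$ is immediate from (a): $\Ann(F)\cap\Ann(G)$ annihilates $F-G$, and $(h\tau)\circ(F-G)=h\circ(\tau\circ F-\tau\circ G)=0$ for all $h$. For $\supseteq$, take $q\in\Ann(F-G)$ and put $w=q\circ F=q\circ G$. Condition (b), together with the double-annihilator property of the pairing \eqref{eq:MDPairing}, translates into $Q\circ(\tau\circ F)=(Q\circ F)\cap(Q\circ G)$; since $w$ lies in the right-hand side we may write $w=(p\tau)\circ F$, and by (a) also $w=(p\tau)\circ G$, so $q-p\tau$ annihilates both $F$ and $G$, giving $q\in(\Ann(F)\cap\Ann(G))+(\tau)$. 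This proves $A\#_TB\cong Q/\Ann(F-G)$ (and re-proves that $A\#_TB$ is AG with dual generator $F-G$, nonzero because $F,G$ are linearly independent). I expect this $\supseteq$ inclusion — seeing that the common value $w$ actually lies in the cyclic submodule generated by $\tau\circ F$ — to be the main obstacle; the rest is formal.

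For the converse, given oriented AG algebras $A,B,T$ with surjections $\pi_A,\pi_B$ and $\pi_A(\tau_A)=\pi_B(\tau_B)$, I would choose a polynomial ring $Q$ with a graded surjection $\rho\colon Q\twoheadrightarrow A\times_TB$. Composing with the two projections presents $A=Q/\Ann(F)$ and $B=Q/\Ann(G)$ for suitable $F,G\in Q'_d$ (scaled to match the given orientations), with $T=Q/\Ann(\tau\circ F)$ and natural projections equal to $\pi_A,\pi_B$; moreover $A\times_TB\cong Q/(\Ann(F)\cap\Ann(G))$ since $\rho$ is surjective. Here $F$ and $G$ must be linearly independent, because $\Ann(F)=\Ann(G)$ would force $A\times_TB\cong A$, contradicting $\dim_K(A\times_TB)=\dim_KA+\dim_KB-\dim_KT$ from \eqref{exactFP} together with $\reg(T)<\reg(A)$. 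Choosing $\tau\in Q_{d-k}$ with $\rho(\tau)=(\tau_A,\tau_B)$ makes $\tau$ a common lift of both Thom classes, so the same computation as above shows $\tau\circ F=\tau\circ G$ is the (nonzero) dual generator of $T$, which is (a). Finally (b) follows from a dimension count: by \eqref{exactFP}, $\dim_K\big((Q\circ F)\cap(Q\circ G)\big)=\dim_KA+\dim_KB-\dim_K(A\times_TB)=\dim_KT=\dim_K Q\circ(\tau\circ F)$, and since $Q\circ(\tau\circ F)\subseteq(Q\circ F)\cap(Q\circ G)$ the two submodules coincide, whence $\Ann(F)+\Ann(G)=\Ann(\tau\circ F)$ by duality. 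The forward direction then applies and identifies the connected sum with $Q/\Ann(F-G)$, completing the proof.
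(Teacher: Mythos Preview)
The paper does not give its own proof of this statement: it is quoted verbatim as \cite[Theorem~4.6]{IMS} and used as a black box. So there is nothing in the present paper to compare your argument against.

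That said, your proposal is a correct and self-contained proof. The key technical point you isolate---that condition (b) is equivalent, via the perfect pairing \eqref{eq:MDPairing}, to the equality $Q\circ(\tau\circ F)=(Q\circ F)\cap(Q\circ G)$, and that this is exactly what is needed to write any $q\in\Ann(F-G)$ as $p\tau$ plus an element of $\Ann(F)\cap\Ann(G)$---is precisely the heart of the matter, and your verification of it is sound. The Chinese Remainder identification of $A\times_TB$ with $Q/(\Ann(F)\cap\Ann(G))$ using (b) for surjectivity, the identification of both Thom classes with the image of the single element $\tau$, and the dimension count establishing (b) in the converse direction are all correct. Your argument is essentially the one given in \cite{IMS}.
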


In particular, when $T=K$ the polynomials $F$ and $G$ in the above theorem are polynomials expressed in disjoint sets of variables. The connected sum $A\#_K B$ is a graded $K$-algebra, but it is not bigraded.
Moreover, it is shown in \cite[Proposition 5.7]{IMS} that if $A$ and $B$ satisfy the SLP and they have the same socle degree, then $A\#_K B$ also satisfies the SLP.

\begin{example} \label{ex2} We will now build the connected sum of the  standard graded complete intersection $K$-algebras $A=K[x,y,z]/(x^3,y^4,z^4)$ and $B=K[u,v]/(u^5,v^5)$ described in Example \ref{ex1}. The connected sum $D = A\#_K B$ is isomorphic to
$$K[x,y,z,u,v]/(xu,xv,yu,yv,zu,zv,x^3,y^4,z^4,u^5,v^5,x^2y^3z^3+u^4v^4).$$
Its Hilbert function is
$$HF_D=(1,5, 9, 13, 15, 13, 9, 5,1) $$
and its Betti table is given in \Cref{tab: betti2}.
\begin{table}[ht]
    \centering
$$    \begin{tabular}{c|cccccc}
        & 0 & 1 & 2&3&4&5\\
       \hline
       total & 1 & 12 & 29&29&12&1\\
       \hline
       0: & 1 & .  & .& .&.&. \\
       1: & . & 6 & 9&5&1&. \\
       2:& . & 1 & 2&1&.&.\\
       3: & . & 2 & 4&2&.&. \\
       4: & . & 2 & 6 & 6 & 2 & .\\
       5: & .& .& 2&4 & 2&. \\
       6:&.&.&1&2&1&.\\
       7:&.&1&5&9&6&.\\
       8:&.&.&.&.&.&1
    \end{tabular}
$$    \caption{Betti table of $D$ in \Cref{ex2}}
    \label{tab: betti2}
\end{table}

%$$
%0\to R(-13) \to RS(-5) \oplus RS(-8)^2\oplus RS(-9)^2 \oplus RS(-10)\oplus RS(-11)^6 \to
%$$
%$$
%RS(-4)^5\oplus RS(-5)  \oplus RS(-6)^2 \oplus RS(-7)^6\oplus RS(-8)^4 \oplus RS(-9)^2\oplus %RS(-10)^9 \to
%$$
%$$ RS(-3)^9\oplus RS(-4)^2  \oplus RS(-5)^4 \oplus RS(-6)^6\oplus RS(-7)^2 \oplus RS(-8)\oplus %RS(-9)^5 
%$$
%$$\to RS(-2)^6\oplus RS(-3) \oplus RS(-4)^2 \oplus  RS(-5)^2\oplus RS(-8)\to RS\to D \to 0.$$
So, $D$ is an AG $K$-algebra with socle degree 8.
\end{example}

An important feature of the connected sum of AG $K$-algebras is that it is also an AG $K$-algebra with the same socle degree as $A$ and $B$ (see \cite[Lemma 3.8]{IMS} or \cite[Theorem 1]{AAM}), in contrast to the fiber product which is an algebra of type two, hence not Gorenstein.

As before, we consider multi-factor connected sums. The multi-factor connected sum construction defined below coincides with ite\-ra\-tively applying the two-factor construction to the list $A_1,\ldots, A_r$ a total of $r-1$ times. In order to define this, we need to define an appropriate orientation and find the Thom class of a connected sum.

\begin{lemma}\label{lem: orientation CS}
Consider the setup of \S \ref{setup CS} and denote by $\tau_A$ and $\tau_B$ the Thom classes of $\pi_A$ and $\pi_B$ respectively. Then $A\#_TB$ is an oriented AG $K$-algebra with orientation $f:A\#_TB\to K$ defined by $f(a,b)=f_A(a)-f_B(b)$. 

Moreover, provided that $\pi_A(\tau_A)=0$, the surjective morphism
\[
\pi:A\#_T B\to T \text{ with } \pi(a,b)=\pi_A(a) =\pi_B(b)
\]
has Thom class $\tau=(\tau_A,0)$.
\end{lemma}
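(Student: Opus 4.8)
The plan is to verify the two assertions directly from the defining exact sequences and the characterizing property of the Thom class. For the first assertion, I would start from the exact sequence \eqref{exactCS}, which identifies $A\#_TB$ with the quotient of $A\times_TB$ by the one-dimensional space spanned by $(\tau_A,\tau_B)$ sitting in degree $d-k$. I would first check that $f:A\times_TB\to K(-d)$ defined on the fiber product by $(a,b)\mapsto f_A(a)-f_B(b)$ is well-defined and graded, then show it descends to $A\#_TB$ by verifying it kills $\langle(\tau_A,\tau_B)\rangle$: for $(c,d)\in A\times_TB$ one has $f((\tau_A,\tau_B)\cdot(c,d)) = f_A(\tau_A c)-f_B(\tau_B d) = f_T(\pi_A(c))-f_T(\pi_B(d))$, which vanishes because $\pi_A(c)=\pi_B(d)$ by definition of the fiber product. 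The defining property of the Thom class $f_A(\tau_A a)=f_T(\pi_A(a))$ is exactly what makes this work. Next I must check that the induced map $\bar f$ on $A\#_TB$ is nonzero, equivalently that $f$ does not vanish identically on the top graded piece $(A\#_TB)_d$; since $(A\#_TB)_d \cong A_d = B_d$ (both one-dimensional, and the socle degree is preserved), nondegeneracy of $f_A$ forces $\bar f$ to be a nonzero functional $A\#_TB\to K(-d)$, hence an orientation.

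For the second assertion, I would use the characterization of the Thom class as the unique homogeneous element $\tau\in(A\#_TB)_{d-k}$ with $\bar f(\tau\cdot w) = f_T(\pi(w))$ for all $w\in A\#_TB$, where $\pi$ is the stated surjection to $T$. One first checks $\pi$ is well-defined: it is induced by the map $A\times_TB\to T$, $(a,b)\mapsto\pi_A(a)=\pi_B(b)$, so I need this to kill $(\tau_A,\tau_B)$, i.e. $\pi_A(\tau_A)=0$ — which is the standing hypothesis. Then I claim $\tau=(\tau_A,0)$, first noting $(\tau_A,0)$ does lie in $A\times_TB$ precisely because $\pi_A(\tau_A)=0=\pi_B(0)$. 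To verify the Thom-class identity, take $w=(a,b)$ with $\pi_A(a)=\pi_B(b)$; then $\bar f((\tau_A,0)\cdot(a,b)) = \bar f(\tau_A a, 0) = f_A(\tau_A a) - f_B(0) = f_A(\tau_A a) = f_T(\pi_A(a)) = f_T(\pi(w))$, again invoking the defining property of $\tau_A$ for $\pi_A$. Uniqueness of the Thom class then gives $\tau=(\tau_A,0)$.

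The one point requiring genuine care — and the main obstacle — is the bookkeeping of orientations and the well-definedness of the various maps on the quotient $A\#_TB$ rather than on $A\times_TB$: I must consistently track that the orientation $f_T$ on $T$ used implicitly to define $\tau_A$ and $\tau_B$ is the same one throughout, that degrees match ($\tau_A\in A_{d-k}$, so $(\tau_A,0)\in(A\#_TB)_{d-k}$ as required), and that the element $(\tau_A,0)$ is not itself zero in $A\#_TB$, i.e. is not a scalar multiple of the class $(\tau_A,\tau_B)$ we quotiented by — this holds since those two elements have distinct second coordinates unless $\tau_B=0$, and even then one can argue via the grading and nonvanishing of $\tau_A$ in $A$. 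Everything else is a formal manipulation using the exact sequences \eqref{exactCS} and the definition of the Thom class recalled at the start of the section.
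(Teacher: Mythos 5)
Your argument is correct, and for the Thom-class assertion it is essentially the paper's own computation: both verify $f(\tau g)=f_T(\pi(g))$ for $g=(a,b)$ via $f(\tau_A a,0)=f_A(\tau_A a)=f_T(\pi_A(a))$ and invoke uniqueness. For the first assertion, however, you take a genuinely different route. The paper does not check anything on the quotient directly: it invokes Theorem \ref{thm: dual generator CS}, which says the Macaulay dual generator of $A\#_TB$ is $F-G$ when $F,G$ are the dual generators corresponding to $f_A,f_B$, and then reads off the orientation as $g\mapsto(g\circ(F-G))(0)=f_A(a)-f_B(b)$; this simultaneously packages the Gorenstein property and the nonvanishing of the orientation. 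You instead verify by hand that $(a,b)\mapsto f_A(a)-f_B(b)$ kills the ideal $\langle(\tau_A,\tau_B)\rangle$ (using the defining property of the Thom classes together with $\pi_A(c)=\pi_B(d)$) and is nonzero in the socle degree, which is more elementary and has the virtue of making explicit the well-definedness of both $f$ and $\pi$ on the quotient --- points the paper's proof glosses over --- at the cost of having to import the fact that $A\#_TB$ is AG of socle degree $d$ from the earlier citations rather than getting it for free from the dual-generator description. Two minor remarks: your nonvanishing argument is cleanest if phrased as $\bar f$ being nonzero because $f(s_A,0)=f_A(s_A)\neq 0$ for a socle generator $s_A\in A_d$ (note $(s_A,0)\in A\times_TB$ since $T_d=0$); and the final worry about $(\tau_A,0)$ not vanishing in $A\#_TB$ is unnecessary, since the Thom-class identity plus uniqueness already identifies the image of $(\tau_A,0)$ as the Thom class regardless.
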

\begin{proof}
Recall from \Cref{thm: dual generator CS} that  if the Macaulay dual generators of $A$ and $B$ are $F$ and $G$ respectively (chosen to correspond to the given orientations $f_A$ and $f_B$), then the Macaulay dual generator of $A\#_T B$ is $F-G$. This defines an orientation by 
\[
g\mapsto \left(g\circ(F-G)\right)(0)=(g\circ F)(0) -(g\circ G)(0).
\]
If $g=(a,b)\in A\#_T B$, then $(g\circ F)(0)=f_A(a)$ and $(g\circ G)(0)=f_B(b)$.

To establish the claim regarding the Thom class we verify that
\[
f(\tau g)=f_T(\pi(g)) \text{ for all } g\in A\#_TB.
\]
With $g=(a,b)$, we have $\tau g=(\tau_A,0)(a,b)=(\tau_A a,0)$ since 
\[
f(\tau g)=f_A(\tau_Aa)=f_A(\pi_A(a))=f(\pi(g)).
\]
\end{proof}

We establish the convention that every connected sum in this paper will be oriented according to the orientation $f$ in Lemma \ref{lem: orientation CS}.

\begin{definition} 
\label{def: multi factor CS}
Let $A_1,\ldots, A_r$ and $T$ be graded AG $K$-algebras with socle degrees $\reg(A_i)=d$ and $\reg(T)=k$ and let  $\pi _i : A_i \to 
T$ be morphisms of graded $K$-algebras with Thom classes $\tau_1, \ldots, \tau_r$, respectively such that $\pi_i(\tau_i)=0$. We define the multi-factor {\em connected sum} $A_1\#_T\cdots \#_T A_r$ by
\begin{multline}
A_1\#_T\cdots \#_T A_r=\\
A_1\times_T \cdots \times_T A_r/\langle (\tau_1, 0,\ldots, 0,\tau_i, 0,\ldots, 0)\mid 2\leq i\leq r\rangle.    
\end{multline}
\end{definition}

\begin{remark}\label{rem:multi-factor CS as iterated CS}
Note that the above definition coincides with iterating the two-factor connected sum construction. Indeed, repeatedly applying Lemma \ref{lem: orientation CS} shows that the Thom class of the iterated connected sum
\[
((A_1\#_T A_2)\#_T A_3)\cdots \#_T A_{i-1}
\]
is $(\tau_1, 0,\ldots,0)$. Thus, to obtain 
\[
(((A_1\#_T A_2)\#_T A_3)\cdots \#_T A_{i-1})\#_T A_i,
\]
one goes modulo $\langle (\tau_1, 0,\ldots,0,\tau_i)\rangle$.
\end{remark}

\begin{lemma}
In the setup of Definition \ref{def: multi factor CS}, there is a short exact sequence
\begin{equation}\label{eq:ses multifactor CS}
0\to T(d-k)^{r-1}\xrightarrow{\alpha} A_1\times_T \cdots \times_T A_r \to A_1\#_T\cdots \#_T A_r \to 0,
\end{equation}
where the map $\alpha$ sends the generator of the $i$-th summand $T$ to the image of $(\tau_1, 0,\ldots,0, \tau_{i+1}, 0,\ldots, 0)$ in $A_1\times_T \cdots \times_T A_r$.
\end{lemma}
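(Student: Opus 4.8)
The plan is to exhibit $\alpha$ as an injective map of graded modules over the fiber product $C:=A_1\times_T\cdots\times_T A_r$ whose image is exactly the kernel of the canonical surjection $C\to A_1\#_T\cdots\#_T A_r$; the short exact sequence~\eqref{eq:ses multifactor CS} then follows formally. First I would record that $(a_1,\ldots,a_r)\mapsto\pi_i(a_i)$ is a well-defined graded surjection $\pi_C\colon C\to T$ (by Definition~\ref{def: multi factor FP}), so that $T(d-k)^{r-1}$ becomes a graded $C$-module via $\pi_C$. By Definition~\ref{def: multi factor CS}, $C\to A_1\#_T\cdots\#_T A_r$ is the quotient by the ideal $I$ generated by the elements $\tau^{(i)}:=(\tau_1,0,\ldots,0,\tau_{i+1},0,\ldots,0)$ for $1\le i\le r-1$, which do lie in $C$ precisely because $\pi_i(\tau_i)=0$. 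Hence it suffices to check that $\alpha$ is a well-defined injection of graded $C$-modules with image $I$.

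The key technical input I would establish is the identity $\Ann_{A_i}(\tau_i)=\ker\pi_i$ for every $i$. The inclusion $\ker\pi_i\subseteq\Ann_{A_i}(\tau_i)$ follows from non-degeneracy of the pairing~\eqref{pairing}: if $\pi_i(a)=0$, then for all $a'\in A_i$ we have $f_{A_i}(\tau_i a a')=f_T(\pi_i(a a'))=f_T(\pi_i(a)\pi_i(a'))=0$, so $\tau_i a$ pairs to zero with every element of $A_i$ and hence vanishes; the reverse inclusion is the mirror statement, using surjectivity of $\pi_i$ and non-degeneracy of the pairing on $T$. A consequence is that $a\tau_i$ depends only on $\pi_i(a)$, equivalently $(\ker\pi_C)\tau^{(i)}=0$ in $C$, and this is exactly what makes the assignment $e_i\mapsto\tau^{(i)}$ on the generator $e_i$ of the $i$-th summand $T$ extend to a $C$-linear map $\alpha$. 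Since $\pi_C$ is surjective, $T(d-k)^{r-1}=\sum_i C\cdot e_i$, so $\im\alpha=\sum_i C\cdot\tau^{(i)}=I$; this gives exactness at $C$, while exactness at the right end is surjectivity of the quotient map.

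For injectivity I would argue directly. Suppose $\alpha\bigl(\sum_i t_i e_i\bigr)=0$ and choose lifts $\tilde t_i=(b^{(i)}_1,\ldots,b^{(i)}_r)\in C$ with $\pi_C(\tilde t_i)=t_i$. Then $\sum_i\tilde t_i\tau^{(i)}\in C$ has $j$-th coordinate equal to $b^{(j-1)}_j\tau_j$ for each $2\le j\le r$, since among the summands only the one with $i=j-1$ reaches slot $j$. Its vanishing forces $b^{(j-1)}_j\in\Ann_{A_j}(\tau_j)=\ker\pi_j$, hence $t_{j-1}=\pi_C(\tilde t_{j-1})=\pi_j(b^{(j-1)}_j)=0$; letting $j$ run over $2,\ldots,r$ gives $t_1=\cdots=t_{r-1}=0$, so $\alpha$ is injective. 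The internal degree shift recorded in~\eqref{eq:ses multifactor CS} is then dictated, exactly as in the two-factor sequence~\eqref{exactCS}, by the fact that $\alpha$ carries the generator $1$ of each summand $T$ to the degree-$(d-k)$ element $\tau^{(i)}$.

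I expect the main obstacle to be the well-definedness of $\alpha$, i.e.\ knowing that $a\tau_i$ is determined by $\pi_i(a)$; this rests entirely on the annihilator computation $\Ann_{A_i}(\tau_i)=\ker\pi_i$, and once it is available the remaining steps are routine bookkeeping. An alternative route is induction on $r$, splicing~\eqref{exactCS} for $(A_1\#_T\cdots\#_T A_{r-1})\#_T A_r=A_1\#_T\cdots\#_T A_r$ with the pushforward of the inductive sequence along $A_1\times_T\cdots\times_T A_{r-1}\twoheadrightarrow A_1\#_T\cdots\#_T A_{r-1}$ (via Remarks~\ref{rem:multi-factor FP as iterated FP} and~\ref{rem:multi-factor CS as iterated CS}); this produces an extension $0\to T(d-k)^{r-2}\to I\to T(d-k)\to 0$, but one still has to check that it splits compatibly with the prescribed $\alpha$, which is no shorter than the direct argument above.
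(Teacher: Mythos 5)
Your proposal is correct and follows essentially the same route as the paper: exactness at the middle and right ends is read off from Definition \ref{def: multi factor CS}, and injectivity of $\alpha$ reduces, coordinate by coordinate, to injectivity of the Gysin-type map $T(d-k)\to A_i$ sending $1\mapsto\tau_i$. The only difference is that the paper cites \cite[Lemma 2.6]{IMS} for that injectivity, whereas you re-derive it (together with the well-definedness of $\alpha$ as a map of modules over the fiber product) from the identity $\Ann_{A_i}(\tau_i)=\ker\pi_i$, proved via the non-degenerate pairing and the defining property of the Thom class — a correct, self-contained substitute for the citation.
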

\begin{proof}
It is clear from Definition \ref{def: multi factor CS} that the connected sum is the cokernel of $\alpha$. It remains to justify that this map is injective. This follows from \cite[Lemma 2.6]{IMS}, where it is shown that the Gysin map $\iota:T(d-k)\to A_i$ that satisfies $\iota(1)=\tau_i$ is injective.
\end{proof}

%As we have seen in this last example the connected sum of the oriented AG algebras $A$ and $B$ over $T$ is a Gorenstein algebra. This is always true . Indeed, we have:
%
%\begin{proposition} Let $(A,f_A)$, $(B,f_B)$ and $(T,f_T)$ be oriented AG
%algebras with socle degrees $d$, $d$, and $k$, respectively, and let $\pi _A : A \to  T$ and $\pi _B : B \to  T$ be
%epimorphisms with Thom classes $\tau _A\in A_{d-k}$ and $\tau _B \in B_{d-k}$. Then the connected
%sum $A\# _T B$ is a (not necessarily standard) graded Artinian Gorenstein $K$-algebra.
%    \end{proposition}
%\begin{proof}
%    See \cite[Lemma3.8]{IMS}.
%\end{proof}
%
%\begin{proposition} Let $A$ and  $B$ two AG
%algebras with the same socle degree. If $A$ and $B$  satisfy the SLP then its connected sum $A\# _K B$ also satisfies the SLP.
%    \end{proposition}
%\begin{proof}
%    See \cite[Proposition 5.7]{IMS}.
%\end{proof}

\subsection{Doubling}
\label{s:doubling}

Let us start by recalling the doubling construction and some basic facts needed later on.

\begin{definition}
Set $R= K[x_1,\ldots,x_n]$. The \emph{canonical module} of a graded $R$-module $M$ is $\omega_M = \Ext^{n - \dim M}_R (M, R)$. 
\end{definition}

For example, one has $\omega_K \cong K$.

\begin{definition}\label{doubling} \cite[Section 2.5]{KKRSSY} Let $J\subset R$ be a homogeneous ideal of codimension $c$, such that $R/J$ is Cohen-Macaulay and $\omega_{R/J}$ is its canonical module. Furthermore, assume that $R/J$ satisfies the condition $G_0$ (i.e., it is Gorenstein at all minimal
primes). %\cite{M}, \cite{H94}). 
Let $I$ be an ideal of codimension $c + 1$. $I$ is called a {\em doubling } of $J$ via $\psi$ if there exists  a short exact sequence of $R/J$ modules
\begin{equation}\label{eq:doubling}
0 \rightarrow \omega_{R/J}(-d)\stackrel{\psi}{\rightarrow} R/J \rightarrow R/I\rightarrow 0.
\end{equation}
By \cite[Proposition 3.3.18]{BV}, if $I$ is a doubling, then $R/I$ is a Gorenstein ring. 
\end{definition}

Doubling plays an important role in the theory of Gorenstein liaison. Indeed, in \cite{KMMNP}, doubling is used to produce suitable Gorenstein divisors on arithmetically Cohen-Macaulay subschemes in several foundational constructions.
It is not true that every
Artinian Gorenstein ideal of codimension $c + 1$ is a doubling of some codimension $c$ ideal (see, for instance, 
\cite[Example 2.19]{KKRSSY}).

Moreover, the mapping cone of $\psi$ in \eqref{eq:doubling} gives a resolution of $R/I $. If it is minimal, then
one can read off the Betti table of $R/I$ from the Betti table of $R/J$. This mapping cone is the direct sum of the minimal free resolution $F_{\bullet }$ of $R/J$ with its dual (reversed) complex $\Hom(F_{\bullet},R)$ which justifies the terminology of "doubling".

\begin{lemma}
  \label{lem:facts}
  Let  $C = R/J$ be a Cohen-Macaulay $K$-algebra. Then: 
\begin{itemize}
\item[(a)]$\reg \omega_C = \dim C$; and

\item[(b)] if $C$ is a doubling of a Cohen-Macaulay $K$-algebra $\tilde{C}$, i.e., $\dim \tilde{C} = \dim C + 1$ and there is an exact sequence of graded $R$-modules
\[
0 \to \omega_{\tC} (-t) \to \tC \to C \to 0,
\]
then $ t = \reg C - \dim C$. 
\end{itemize}

\end{lemma} 

\begin{proof}
(a) Consider a graded minimal free resolution of $C$ as an $R$-module
\[
0 \to F_c \to \cdots \to F_1 \to R \to C \to 0,
\]
where $c = \dim R- \dim C$ denotes the codimension.
Dualizing and then shifting it, we obtain a graded minimal free resolution of $\omega_C$ of the form
\[
0 \to R(-\dim R) \to F_1^*(-\dim R) \to  \cdots \to  F_c^* (-\dim R)  \to \omega_C \to 0. 
\]
Claim (a) follows using the characterization of regularity by graded Betti numbers. 

(b) The $\Tor$ sequence of the given short exact sequence begins 
\[
0 \to \Tor_c^R (C, K) \to \Tor_{c-1}^R (\omega_{\tC}, K) (-t) \to \cdots .
\] 
Since $C$ is a Gorenstein algebra,  $\Tor_c^R (C, K)$ is concentrated in degree $c + \reg C$. The above resolution of $\omega_C$ shows that  $\Tor_{c-1}^R (\omega_{\tC}, K) (-t)$ is concentrated in degree $t+\dim R$.
It follows that $c + \reg C = t +\dim R$, which proves Claim (b). 
\end{proof}

\section{Graded Betti numbers of the Connected Sum} \label{sec:Betti numbers}
\subsection{Two Summands}

Let $R = K[x_1,\ldots,x_m]$ and $S = K[y_1,\ldots,y_n]$ be polynomial rings over  $K$ with the standard grading, and let $\x = (x_1,\ldots,x_m)$ and $\y = (y_1,\ldots,y_n)$ denote the homogeneous maximal ideals of $R$ and $S$, respectively. Set  
\[
Q = R \otimes_K S \cong K[x_1,\ldots,x_m, y_1,\ldots,y_n].
\]

Let $A = R/\fa$ and $B = S/\fb$ be standard graded  $K$-algebras. %\AS{we don't need $A$ and $B$ to be Gorenstein, Artinian, or of the same socle degree until we discuss the connected sum.}
We will assume $\fa_1 = \fb_1 = 0$, that is, the ideals $\fa$ and $\fb$ do not contain any non-zero linear forms.  Note that $R\times_K S=Q/(\x\cap \y)$ by Lemma \ref{lem:fiber product over K}. We start by determining the Betti numbers of this ring. %We also assume that $\fa$ and $\fb$ are not zero, i.e., $A$ and $B$ are not polynomial rings. 
Note that the ideal $\x \cap \y$ of $Q$ is a so-called {\em Ferrers ideal} and admits a minimal graded free cellular resolution that is supported on the join of two simplices (see \cite{CN}).
We show other useful fact about this resolution below. 

Henceforth, we set $\binom{a}{b}=0$ if $b>a$.

\begin{lemma}
   \label{lem:Betti product ideal}
The ideal $\x \cap \y$ of $Q$ has a 2-linear minimal free graded resolution over $Q$ and, for $i \ge 1$, we have 
\begin{multline*}
[\Tor_i^Q( Q/\x \cap \y, K)]_{i+1}\cong  \\
\coker \big ( [\Tor^Q_{i+1} (Q/\x, K) \oplus \Tor^Q_{i+1} (Q/\y, K)]_{i+1} \to [\Tor^Q_{i+1} (K, K)]_{i+1}  \big ). 
\end{multline*}
In particular, one has 
\begin{equation*} 
   \label{eq:explict Betti product}
\dim_K [\Tor_i^Q( Q/\x \cap \y, K)]_{i+1} = \binom{m+n}{i+1} - \binom{m}{i+1} - \binom{n}{i+1} 
\end{equation*}
and $P^Q_{Q/\x \cap \y}(t,s)%=t^{-1}[(1+st)^{m+n}-(1+st)^m-(1+st)^n+1]+1$.
=t^{-1}[(1+st)^{m}-1][(1+st)^n-1]+1$.
\end{lemma}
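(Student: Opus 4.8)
The plan is to work with the Taylor-type / Koszul structure of the ideal $\x \cap \y$ in $Q$. First I would record the short exact sequence of $Q$-modules
\[
0 \to Q/(\x \cap \y) \to Q/\x \oplus Q/\y \to K \to 0,
\]
which is just the $r=2$ case of the fiber-product sequence \eqref{exactFP} applied to $R \times_K S = Q/(\x\cap\y)$, rewritten so that $Q/\x$, $Q/\y$, $K = Q/(\x+\y)$ are all viewed as $Q$-modules. Note $\x$ and $\y$ are generated by disjoint sets of variables, so $Q/\x \cong S$, $Q/\y \cong R$, and their minimal free resolutions over $Q$ are Koszul complexes on $x_1,\dots,x_m$ and $y_1,\dots,y_n$ respectively; likewise $K$ is resolved by the full Koszul complex on all $m+n$ variables. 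In particular $\Tor^Q_i(Q/\x,K)$ is concentrated in degree $i$ with dimension $\binom{m}{i}$, and similarly for the others.

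Next I would run the long exact sequence in $\Tor^Q(-,K)$ associated to the short exact sequence above. Because all three of $Q/\x$, $Q/\y$, $K$ have linear resolutions (generated in degree $0$ with $i$-th syzygies in degree $i$), a degree-by-degree inspection shows the connecting maps interleave things cleanly: in internal degree $j$, $[\Tor^Q_i(Q/\x\oplus Q/\y,K)]_j$ and $[\Tor^Q_i(K,K)]_j$ are nonzero only for $j=i$. Writing out the piece of the long exact sequence around $\Tor_i$ in internal degree $i+1$ gives
\[
[\Tor^Q_{i+1}(Q/\x\oplus Q/\y,K)]_{i+1} \to [\Tor^Q_{i+1}(K,K)]_{i+1} \to [\Tor^Q_i(Q/(\x\cap\y),K)]_{i+1} \to [\Tor^Q_i(Q/\x\oplus Q/\y,K)]_{i+1},
\]
and the last term vanishes (it lives in internal degree $i$, not $i+1$), which yields exactly the claimed cokernel description. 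The same degree count shows $[\Tor^Q_i(Q/(\x\cap\y),K)]_j = 0$ for $j \ne i+1$ when $i \ge 1$ (and the generators are the $mn$ quadrics $x_ky_l$), so the resolution is $2$-linear.

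For the explicit dimension count I would identify the map $[\Tor^Q_{i+1}(Q/\x,K)]_{i+1}\oplus[\Tor^Q_{i+1}(Q/\y,K)]_{i+1} \to [\Tor^Q_{i+1}(K,K)]_{i+1}$ with the obvious inclusion $\bigwedge^{i+1}\langle x\rangle \oplus \bigwedge^{i+1}\langle y\rangle \hookrightarrow \bigwedge^{i+1}\langle x,y\rangle$ of exterior powers (this is where one uses that the three resolutions are compatible Koszul complexes); it is injective, so the cokernel has dimension $\binom{m+n}{i+1}-\binom{m}{i+1}-\binom{n}{i+1}$. Summing $\dim_K[\Tor^Q_i]_{i+1}\,t^i s^{i+1}$ over $i\ge 1$ and adding the $i=0$ term $1$ gives the generating function identity: $\sum_{i\ge 0}\binom{m+n}{i+1}t^i s^{i+1} = t^{-1}((1+st)^{m+n}-1)$ and similarly for the $m$- and $n$-terms, and $(1+st)^{m+n} = (1+st)^m(1+st)^n$, so the three binomial sums combine into $t^{-1}\big[(1+st)^m-1\big]\big[(1+st)^n-1\big]$ after the cross terms cancel against the single subtractions; adding $1$ gives the stated formula. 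The only real subtlety — hence the step I would be most careful about — is checking that the connecting homomorphism in the long exact sequence is surjective onto the correct graded piece (equivalently that the map of exterior powers is the natural inclusion and is injective), so that the cokernel genuinely computes the Tor and there are no hidden contributions in other internal degrees; the rest is bookkeeping with binomial coefficients.
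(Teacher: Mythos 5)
Your proposal is correct and follows essentially the same route as the paper: the Mayer--Vietoris sequence $0 \to Q/\x\cap\y \to Q/\x \oplus Q/\y \to K \to 0$, its long exact sequence in $\Tor^Q(-,K)$, and degree bookkeeping using the linearity of the Koszul resolutions of $Q/\x$, $Q/\y$, and $K$, yielding the cokernel description and then the binomial/Poincar\'e formulas. The only difference is cosmetic: the paper quotes the $2$-linearity from the theory of Ferrers ideals and leaves the final count as a direct verification, whereas you derive the linearity from the same sequence and make explicit the identification of the comparison map with the inclusion of exterior powers $\bigwedge^{i+1}\langle x\rangle \oplus \bigwedge^{i+1}\langle y\rangle \hookrightarrow \bigwedge^{i+1}\langle x,y\rangle$, whose injectivity indeed justifies both the vanishing in internal degree $i$ and the dimension formula.
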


\begin{proof}
For brevity, let us write $\Tor_i^Q (M)$ instead of $\Tor_i^Q (M, K)$ for a $Q$-module $M$. The first part is well-known (see, e.g., \cite[Theorem 2.1]{CN}). It also follows from the Mayer-Vietoris sequence
\[
0 \to Q/\x \cap \y \to Q/\x \oplus Q/\y \to Q/\x+\y \to 0. 
\]
Observe that $Q/\x+\y \cong K$ as a $Q$-module. Thus, for any integer $i \ge 0$, the induced long exact sequence in $\Tor$ gives 
\begin{gather*} 
[\Tor_{i+1}^Q( Q/\x \cap \y)]_{i+1} \to   [\Tor_{i+1}^Q (Q/\x) \oplus \Tor_{i+1}^Q (Q/\y)]_{i+1} \to \\
[\Tor_{i+1}^Q (K)]_{i+1} 
\to [\Tor_i^Q( Q/\x \cap \y)]_{i+1} \to [\Tor_{i}^Q (Q/\x) \oplus \Tor_{i}^Q (Q/\y)]_{i+1}. 
\end{gather*}
The left-most module in this sequence is zero because $\x \cap \y$ has a 2-linear resolution. Similarly, the right-most module is zero because $\x$ and $\y$ have linear resolutions. Thus, the second claim follows.
The last claim can be verified directly based on the second. 
\end{proof}

The following result gives a formula to compute the graded Betti numbers of the fiber product $A\times_K B$. 

\begin{notation}Given a power series $P(t,s)=\sum c_{ij}s^jt^i\in\mathbb{Z}[s][[t]]$, we set $\widetilde{P}(t,s)=\sum\limits_{j>i} c_{ij}s^jt^i$ to be the sum of the terms of $P(t,s)$ having $j>i$.
\end{notation}

\begin{theorem}
    \label{thm:Betti fiber product}

For any integer $i\ge 1$, there is an isomorphism of graded $K$-vector spaces
\begin{multline*} \label{eq:Betti fiber product}
 [\Tor_i^Q (A \times_K B, K)]_j \\
\cong \begin{cases}
0 & \text{ if $j \le i$}, \\[2pt]
[\Tor_i^Q (A , K)]_{i+1} \oplus  [\Tor_i^Q (B , K)]_{i+1}  & \\
 \oplus   [\Tor_i^Q( Q/\x \cap \y, K)]_{i+1} & \text{ if $j = i+1$}, \\[2pt]
  [\Tor_i^Q (A , K)]_{j} \oplus [\Tor_i^Q (B , K)]_{j}  & \text{ if $j \ge i+2$}. 
\end{cases}
\end{multline*}
In particular, one has 
\[
P^Q_{A \times_K B}(t,s)=\widetilde{P}^Q_{A} (t,s)+\widetilde{P}^Q_{B}(t,s) + P^Q_{Q/\x \cap \y}(t,s) 
\]and
\[
\reg (A \times_K B) = \max\{\reg A ,\reg B\}.
\]
\end{theorem}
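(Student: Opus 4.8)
The plan is to combine the short exact sequence \eqref{exactFPr} (in the two-factor case $r=2$) with the explicit presentation \eqref{eq:fiber product presentation} and the computation of $\Tor$ for the Ferrers ideal $\x\cap\y$ from Lemma \ref{lem:Betti product ideal}. First I would apply $\Tor^Q(-,K)$ to the short exact sequence
\[
0\to A\times_K B\to A\oplus B\to K\to 0
\]
to obtain a long exact sequence relating $\Tor_i^Q(A\times_K B,K)$, $\Tor_i^Q(A,K)\oplus\Tor_i^Q(B,K)$, and $\Tor_i^Q(K,K)$. The crucial input is the structure of the connecting homomorphism $\Tor_{i+1}^Q(K,K)\to\Tor_i^Q(A\times_K B,K)$: I would argue that, after identifying $A$ and $B$ with their images under the natural surjections $Q\to R\to A$ and $Q\to S\to B$ (which factor through the presentation in Lemma \ref{lem:fiber product over K}), the connecting map sees precisely the generators $\x\cap\y$ of the presenting ideal that are \emph{not} already among the generators of $\fa+\fb$. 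This is where the hypothesis $\fa_1=\fb_1=0$ enters: it guarantees that no linear relations are shared, so that in homological degree $1$ one genuinely picks up all the quadratic Ferrers generators.

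The key step is a degree-by-degree analysis. For $j\ge i+2$: the map $\Tor_i^Q(A,K)\oplus\Tor_i^Q(B,K)\to\Tor_i^Q(K,K)$ is zero in degrees $\ge i+2$ because $\Tor_i^Q(K,K)$ is concentrated in degree $i$ (the Koszul complex), and likewise $\Tor_{i+1}^Q(K,K)_j=0$ for $j\ge i+2$; so the long exact sequence degenerates and gives the claimed isomorphism $[\Tor_i^Q(A\times_K B,K)]_j\cong[\Tor_i^Q(A,K)]_j\oplus[\Tor_i^Q(B,K)]_j$. For $j\le i$: both $[\Tor_i^Q(A,K)]_j$ and $[\Tor_i^Q(B,K)]_j$ vanish since $A=R/\fa$ and $B=S/\fb$ have no linear forms in their defining ideals, hence their resolutions over $Q$ have no generators in internal degree $\le i$ at homological step $i$ beyond the Koszul part—more precisely $\fa,\fb$ being generated in degrees $\ge 2$ forces $[\Tor_i^Q(A,K)]_j=0$ for $j<i+1$ when $i\ge 1$; combined with $[\Tor_i^Q(K,K)]_j=0$ for $j\ne i$ and $[\Tor_{i+1}^Q(K,K)]_{j}=0$ for $j\le i$, the long exact sequence forces $[\Tor_i^Q(A\times_K B,K)]_j=0$. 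The interesting case $j=i+1$ requires showing the long exact sequence breaks into short exact sequences
\[
0\to [\Tor_{i+1}^Q(K,K)]_{i+1}\xrightarrow{\ \delta\ }[\Tor_i^Q(A\times_K B,K)]_{i+1}\to[\Tor_i^Q(A,K)]_{i+1}\oplus[\Tor_i^Q(B,K)]_{i+1}\to 0
\]
and identifying the image of $\delta$ with $[\Tor_i^Q(Q/\x\cap\y,K)]_{i+1}$. For the latter I would invoke Lemma \ref{lem:Betti product ideal}, whose statement already expresses exactly this $\Tor$ as the cokernel of $[\Tor_{i+1}^Q(Q/\x,K)\oplus\Tor_{i+1}^Q(Q/\y,K)]_{i+1}\to[\Tor_{i+1}^Q(K,K)]_{i+1}$; matching this cokernel with the image of $\delta$ amounts to checking that the relevant maps in the two long exact sequences (for $A\times_K B$ versus for $Q/\x\cap\y$) are compatible, which follows from functoriality once one notes $\Tor_{i+1}^Q(Q/\x,K)_{i+1}=\Tor_{i+1}^Q(R,K)_{i+1}=0$... here I must instead route through $\Tor_{i+1}^Q(R/\x, K)=\Tor_{i+1}^Q(K,K)$ carefully: the point is that the maps $A\to K$ and $Q/\x\cap\y\to K$ both factor the augmentation, so the induced maps on $\Tor$ in the splice degree agree.

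Finally, the two ``In particular'' assertions are formal consequences. The Poincar\'e series identity is obtained by summing the three cases over all $i,j$: the contributions with $j\ge i+1$ from $A$ and from $B$ assemble into $\widetilde P^Q_A(t,s)+\widetilde P^Q_B(t,s)$ (the tilde precisely discards the $j\le i$ terms, which the theorem shows do not contribute, and the $j=i$ Koszul-type terms of $A$ and $B$ are absorbed into the $Q/\x\cap\y$ term), while the $j=i+1$ Ferrers contributions assemble into $P^Q_{Q/\x\cap\y}(t,s)$; one has to be slightly careful to check the bookkeeping of the degree-$0$ and the linear-strand terms so that nothing is double-counted, using the explicit formula $P^Q_{Q/\x\cap\y}(t,s)=t^{-1}[(1+st)^m-1][(1+st)^n-1]+1$ from Lemma \ref{lem:Betti product ideal}. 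The regularity statement follows by reading off the top nonzero internal degree: by the case analysis, $[\Tor_i^Q(A\times_K B,K)]_j\ne 0$ for some $j-i=p$ iff either $[\Tor_i^Q(A,K)]_j\ne 0$ or $[\Tor_i^Q(B,K)]_j\ne 0$ (for $p\ge 2$) or $p=1$, and since $\reg A,\reg B\ge 1$ the maximum projective-minus-nothing shift is $\max\{\reg A,\reg B\}$. \textbf{The main obstacle} I anticipate is the precise identification of the connecting homomorphism's image in degree $j=i+1$ with $[\Tor_i^Q(Q/\x\cap\y,K)]_{i+1}$, i.e., verifying the compatibility of the two long exact sequences via a commutative diagram; everything else is degree bookkeeping in the Koszul complex.
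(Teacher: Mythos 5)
Your overall skeleton matches the paper's: apply $\Tor^Q(-,K)$ to the sequence $0\to A\times_K B\to A\oplus B\to K\to 0$, use that $\Tor_i^Q(K,K)$ is concentrated in degree $i$ for the range $j\ge i+2$, and feed Lemma \ref{lem:Betti product ideal} into the degree $j=i+1$ analysis. However, two of your key steps are genuinely wrong, and both stem from confusing Betti numbers over $R$ (resp.\ $S$) with Betti numbers over $Q$. First, your vanishing argument for $j\le i$ rests on the claim that $[\Tor_i^Q(A,K)]_j=0$ for $j\le i$ because $\fa$ has no linear generators; this is false over $Q$, since as a $Q$-module $A\cong Q/(\fa+\y)$ and the linear forms $\y$ give $\beta^Q_{i,i}(A)\ge\binom{n}{i}\neq 0$ (e.g.\ $\beta^Q_{1,1}(A)=n$). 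With that term nonzero, the long exact sequence at $j=i$ only yields an injection of $[\Tor_i^Q(A\times_K B)]_i$ into a nonzero space, not vanishing. The paper avoids this entirely: it deduces $[\Tor_i^Q(A\times_K B)]_j=0$ for $j\le i$ directly from the presentation $A\times_K B\cong Q/(\x\cap\y+\fa+\fb)$, whose defining ideal has initial degree two, so the minimal resolution has no terms on or below the diagonal.

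Second, in the case $j=i+1$ your displayed short exact sequence, with the full $[\Tor_{i+1}^Q(K,K)]_{i+1}$ injecting via the connecting map $\delta$, cannot hold: the preceding map $\gamma\colon [\Tor_{i+1}^Q(A)\oplus\Tor_{i+1}^Q(B)]_{i+1}\to[\Tor_{i+1}^Q(K)]_{i+1}$ is nonzero (its source is $[\Tor_{i+1}^Q(Q/\y)\oplus\Tor_{i+1}^Q(Q/\x)]_{i+1}$ precisely because $\fa_1=\fb_1=0$), so only $\coker\gamma$ embeds into $[\Tor_i^Q(A\times_K B)]_{i+1}$. A dimension count makes this unavoidable: Lemma \ref{lem:Betti product ideal} gives $\dim[\Tor_i^Q(Q/\x\cap\y)]_{i+1}=\binom{m+n}{i+1}-\binom{m}{i+1}-\binom{n}{i+1}$, which is strictly smaller than $\dim[\Tor_{i+1}^Q(K)]_{i+1}=\binom{m+n}{i+1}$ in general. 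The same confusion reappears when you assert $[\Tor_{i+1}^Q(Q/\x,K)]_{i+1}=0$; in fact it is $\Lambda^{i+1}(K^m)$ sitting in degree $i+1$. The paper's route through your acknowledged ``main obstacle'' is exactly this: use $\fa_1=\fb_1=0$ to identify $\gamma$ degreewise with the map $[\Tor_{i+1}^Q(Q/\x)\oplus\Tor_{i+1}^Q(Q/\y)]_{i+1}\to[\Tor_{i+1}^Q(K)]_{i+1}$ coming from the Mayer--Vietoris sequence for $\x$ and $\y$, so that $\coker\gamma\cong[\Tor_i^Q(Q/\x\cap\y)]_{i+1}$ by Lemma \ref{lem:Betti product ideal}, and then the remaining piece of the long exact sequence (whose right end dies because $[\Tor_i^Q(K)]_{i+1}=0$) splits as graded vector spaces. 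Your ``In particular'' bookkeeping and regularity remarks are fine once the three cases are correctly established, but as written the proof of the two delicate cases does not go through.
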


\begin{proof}
 Again, we write $\Tor_i^Q (M)$  instead of $\Tor_i^Q (M, K)$ for any $Q$-module $M$. 
Consider the exact sequence \eqref{exactFP} of graded $Q$-modules
\[
0 \to A \times_K B \to A \oplus B \to K \to 0. 
\]
Its long exact sequence in $\Tor$ gives exact sequences 
\begin{equation}
   \label{tor sequence fiber}
\Tor_{i+1}^Q (K) \to \Tor_i^Q (A \times_K B) \to \Tor_i^Q (A) \oplus \Tor_i^Q (B) \to \Tor_i^Q (K). 
\end{equation} 

As a $Q$-module, $K$ is resolved by a Koszul complex, which shows in particular $[\Tor_i^Q (K)]_j \neq 0$ if and only if $0 \le i = j \le m+n$. Considering the sequence \eqref{tor sequence fiber} in degree $j$, we conclude that 
\[
[\Tor_i^Q (A \times_K B, K)]_j  \cong [\Tor_i^Q(A , K)]_{j} \oplus [\Tor_i^Q (B , K)]_{j}  
\]
if $j \ge i+2$. 

Using the fact that $A \times_K B \cong Q/(\x \cap \y, \fa, \fb)$--as in \eqref{eq:fiber product presentation}--and that the initial degree of $(\x \cap \y, \fa, \fb)$ is two since $\fa$ and $\fb$ do not contain any linear forms by assumption, we see that for $j \le i$,
\[
[\Tor_i^Q(A \times_K B, K)]_j  =0 .  
\]

It remains to consider $[\Tor_i^Q (A \times_K B, K)]_{i+1}$. To this end, we use a longer part of the exact $\Tor$ sequence and consider it in degree $i+1$: 
\begin{align*}
& [\Tor_{i+1}^Q (A) \oplus \Tor_{i+1}^Q (B)]_{i+1}   \stackrel{\gamma}{\longrightarrow} [\Tor_{i+1}^Q (K)]_{i+1} \to\\
&  \hspace*{1cm} [\Tor_i^Q (A \times_K B)]_{i+1} \to [\Tor_i^Q (A) \oplus \Tor_i^Q (B)]_{i+1} \to [\Tor_i^Q (K)]_{i+1}. 
\end{align*}
The right-most module in this sequence is zero because $K$ has a linear resolution as a $Q$-module.
Note that there are isomorphisms of graded $Q$-modules $A \cong Q/(\fa, \y)$ and $B = Q/(\x, \fb)$. Since $\fa_1 = \fb_1 = 0$, it follows that 
\[
[\Tor_{i+1}^Q (A) \oplus \Tor_{i+1}^Q (B)]_{i+1} \cong [\Tor_{i+1}^Q (Q/\y) \oplus \Tor_{i+1}^Q (Q/\x)]_{i+1}.\]
Therefore, the cokernel of the map $\gamma$ is equal to the cokernel of the map $ [\Tor_{i+1}^Q (Q/\x) \oplus \Tor_{i+1}^Q (Q/\y)]_{i+1} \to [\Tor_{i+1}^Q (K)]_{i+1}$. By \Cref{lem:Betti product ideal}, the latter is isomorphic to $[\Tor_i^Q( Q/\x \cap \y, K)]_{i+1}$. Hence, the above sequence proves the claim for $[\Tor_i^Q (A \times_K B, K)]_{i+1}$. 

The claim regarding the Poincar\'{e} series follows by taking the dimensions of the $\Tor$ modules and using the isomorphism in the first part of the theorem. 
\end{proof}

\begin{remark}
The previous result allows us to write the Betti numbers of the fiber product in terms of those of the summands:
\begin{equation*} 
%   \label{eq:Betti fiber product}
\beta^Q_{i,j} (A \times_K B) =
    \begin{cases}
    0 & \text{ if } j \le i \\[2pt]
    \beta^Q_{i,i+1} (A) + \beta^Q_{i,i+1} (B) + \beta^Q_{i,i+1} ( Q/\x \cap \y) & \text{ if } j = i+1 \\[2pt]
     \beta^Q_{i,j} (A) + \beta^Q_{i,j} (B)  & \text{ if } j \ge i+2. 
    \end{cases}
\end{equation*}
\end{remark}

In \Cref{thm:Betti fiber product}, we have  computed the graded Betti numbers of
the fiber product $A\times_K B$  in terms of the graded Betti numbers of $A$ and $B$ as $Q$-modules and we will now convert these formulas into formulas depending only on $[\Tor_i^R (A, K)]_j$ and $[\Tor_i^S (B, K)]_j$.

\begin{notation}
We set $[x]_+=\max\{0,x\}$. 
\end{notation}

\begin{proposition}\label{betaTbetaR} 
The identities 
\begin{eqnarray*}
  P^Q_A(t,s) &=& P^R_A(t,s)\cdot (1+st)^{\dim S}, \text{ and} \\
  P^Q_B(t,s) &=& P^S_B(t,s)\cdot (1+st)^{\dim R}
\end{eqnarray*}  yield
\begin{eqnarray*}
  \widetilde{P}^Q_A(t,s) &=& (P^R_A(t,s)-1)\cdot (1+st)^{\dim S}, \text{ and} \\
  \widetilde{P}^Q_B(t,s) &=& (P^S_B(t,s)-1)\cdot (1+st)^{\dim R}.
\end{eqnarray*}
Thus, with $m=\dim R$ and $n=\dim S$, we have
\begin{eqnarray*}
\beta _{i,j}^Q(A) &=& \sum _{\ell =0}^{\min(i,m)-1} \binom{n}{[i-m]_++\ell}\beta_{\min(i,m)-\ell,j-\ell-[i-m]_+}^R(A), \text{ and} \\
\beta _{i,j}^Q(B) &=& \sum _{\ell =0}^{\min(i,n)-1} \binom{m}{[i-n]_++\ell}\beta_{\min(i,n)-\ell,j-\ell-[i-n]_+}^S(B).
\end{eqnarray*}

\end{proposition}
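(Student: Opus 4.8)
The plan is to deduce the Poincar\'e series identities from a standard flat base change, then extract the coefficient-level formulas by expanding the binomial factor. First I would recall that $Q = R \otimes_K S$ is flat over $R$, so that $\Tor_i^Q(A, K) \cong \Tor_i^R(A, K) \otimes_K \Tor_{\bullet}^S(S/\y, K)$ via a K\"unneth-type decomposition; more concretely, if $F_\bullet$ is the minimal graded free resolution of $A = R/\fa$ over $R$, then $F_\bullet \otimes_R Q$ is the minimal graded free resolution of $A \otimes_R Q = Q/\fa Q$ over $Q$, and tensoring further with the Koszul complex on $y_1, \dots, y_n$ resolves $Q/(\fa, \y) = A$ (viewed as a $Q$-module). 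Taking the tensor product of these two complexes and passing to Poincar\'e series gives $P^Q_A(t,s) = P^R_A(t,s)\cdot(1+st)^{\dim S}$, since the Koszul complex on $n$ variables contributes the factor $(1+st)^n$; the formula for $B$ is symmetric.

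Next, for the tilde versions, I would observe that $\widetilde{P}$ simply deletes the terms with $j \le i$; since $A = R/\fa$ with $\fa_1 = 0$, the only term of $P^R_A(t,s)$ with $j \le i$ is the constant term $1$ (coming from $\Tor_0$), and multiplying $1$ by $(1+st)^{\dim S}$ produces only terms with $j = i$ (the diagonal of the Koszul complex). Hence all terms of $P^R_A(t,s)\cdot(1+st)^{\dim S}$ with $j \le i$ come precisely from the summand $1\cdot(1+st)^{\dim S}$, and subtracting it yields $\widetilde{P}^Q_A(t,s) = (P^R_A(t,s) - 1)\cdot(1+st)^{\dim S}$. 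This requires checking that $(P^R_A(t,s) - 1)\cdot(1+st)^{\dim S}$ has no terms with $j \le i$, which follows because every monomial $s^j t^i$ of $P^R_A - 1$ has $j > i$ (minimality plus $\fa_1 = 0$) and multiplying by a monomial $s^\ell t^\ell$ from the Koszul factor preserves the strict inequality.

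Finally, to get the explicit formula for $\beta_{i,j}^Q(A)$, I would extract the coefficient of $s^j t^i$ in $P^R_A(t,s)\cdot(1+st)^{\dim S} = \sum_{k,l}\beta_{k,l}^R(A)s^l t^k \cdot \sum_{\ell=0}^{n}\binom{n}{\ell}s^\ell t^\ell$. Matching exponents forces $k + \ell = i$ and $l + \ell = j$, so $\beta_{i,j}^Q(A) = \sum_\ell \binom{n}{\ell}\beta_{i-\ell, j-\ell}^R(A)$, where $\ell$ ranges over the values making both subscripts valid. The reindexing to get the stated bounds is the one bookkeeping point: since $\beta_{k,l}^R(A) = 0$ for $k > m = \dim R$ (as $R$ is a polynomial ring in $m$ variables), we need $i - \ell \le m$, i.e.\ $\ell \ge i - m$, so the lowest relevant value is $\ell = [i-m]_+$; writing $\ell = [i-m]_+ + \ell'$ with $\ell' \ge 0$ and noting $i - \ell = \min(i,m) - \ell'$ gives exactly the displayed sum with $\ell'$ running from $0$ to $\min(i,m) - 1$ (the top value being dictated by $\beta_0^R(A) = 1$ sitting in homological degree $0$, so $i - \ell = \min(i,m)-\ell' \ge 1$). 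The symmetric computation handles $B$.

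I do not expect any genuine obstacle here: the content is entirely the flat base change isomorphism together with careful tracking of the Koszul factor. The only place demanding attention is the index juggling in the last step — correctly identifying the range of $\ell'$ and verifying that the substitution $\min(i,m) - \ell$ versus $i - \ell$ behaves the same way in both the cases $i \le m$ and $i > m$ — but this is routine once the generating-function identity is in hand.
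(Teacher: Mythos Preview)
Your proposal is correct and follows essentially the same argument as the paper: obtain the $Q$-resolution of $A$ by tensoring the base-changed $R$-resolution with the Koszul complex on the $y$-variables to get the Poincar\'e-series factorization, observe that the only $j\le i$ terms arise from the constant $1$ times $(1+st)^n$ to obtain the tilde identity, and then extract coefficients with the same case split $i\le m$ versus $i>m$ (which you repackage via $\min(i,m)$ and $[i-m]_+$). The paper's proof is terser, but the ideas and the bookkeeping are identical.
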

\begin{proof}
Since $Q$ is a free $R$-module, we have $P^Q_{Q/\fa}(t,s)=P^R_{R/\fa}(t,s)=P^R_A(t,s)$. We know $A=Q/(\fa +\y)$, and $\Tor_i^K(Q/\fa,Q/\y)=0$ for $i\geq 1$, so the minimal free resolution of $A$ as a $Q$-module is obtained  by tensoring  the minimal free resolutions of $Q/\fa$ and $Q/\y$. This justifies the identity 
\[
P^Q_A(t,s)=P^Q_{Q/\fa}(t,s)\cdot P^Q_{Q/\y}(t,s)=P^R_A(t,s)\cdot (1+st)^{\dim S}.
\]
Since the terms with equal exponents for $s$ and $t$ in $P^Q_A(t,s)$ arise  from the constant term of the first factor multiplied with the second factor, removing this term yields $\widetilde{P}^Q_A(t,s)$.
\begin{comment}
We tensor over $K$ the minimal free resolution of $A$ as $R$-module:
$$
0\to F_{m} \to F_{m-1} \to \cdots \to F_2\to F_1\to F_0=R\to A\to 0
$$
where $F_i=\oplus _{j}R(-j)^{\beta _{i,j}^R(A)}$  and $\beta _{i,j}^R(A)=\dim _K[Tor_i^R(A,K)]_j$ 
with the Koszul complex resolving $S/\y$ over $S$:
$$
0\to S(-n)\to S(-n+1)^n\to \cdots \to S(-2)^{{n\choose 2}} \to S(-1)^n\to S \to K=S/\y \to 0
$$
and we obtain the minimal free resolution of $A$ over $Q=R\otimes _KS$:
$$
0\to H_{m+n} \to H_{m+n-1} \to \cdots \to H_2\to H_1\to H_0=T\to A\to 0
$$
where for any $i$, $1\le i \le n+m$, we have $\beta _{i,j}^Q(A)=\dim _K[\Tor_i^Q(A,K)]_j$ and 

$$\begin{array}{rcl}H_i & = & \bigoplus _{j}Q(-j)^{\beta _{i,j}^Q(A)} \\
&= & \bigoplus _{p,q\ge 0\atop p+q=i}F_p\otimes _KS(-q)^{{n\choose q}}=\bigoplus _{p,q\ge 0\atop p+q=i}\big( \oplus _j Q(-j-q)^{{n\choose q}\beta _{p,j}^R(A)}\big)\end{array}. $$
\end{comment}
Therefore, using the convention $\binom{n}{a}=0$ if $a>n$, we conclude that
$$
\beta _{i,j}^Q(A)=\begin{cases} \sum _{\ell =0}^{i-1} \binom{n}{\ell}\beta_{i-\ell,j-\ell}^R(A) \text{ if } 1\le i \le m, \text{ and } \\  \\
\sum _{\ell = 0}^{m-1}  \binom{n}{i-m+\ell}\beta_{m-\ell,j-i+m-\ell}^R(A)   \text{ if } m<i\le m+n.
\end{cases}
$$
The claims regarding $B$ are justified similarly. 
\begin{comment}
Similarly one obtains $\beta _{i,j}^Q(B)=\dim _K[\Tor_i^Q (B, K)]_j$ from $\beta _{i,j}^S(B)=\dim _K[\Tor_i^S (B, K)]_j$ using the Koszul complex resolving $R/\x$ over $R$.
\end{comment}
\end{proof}

Combining Lemma \ref{lem:Betti product ideal}, Theorem \ref{thm:Betti fiber product} and Proposition \ref{betaTbetaR}, we get:
\begin{corollary} With the above notation, we have 
\begin{equation}
\label{eq:Geller}
\begin{split}
    P^Q_{A \times_K B}(t,s)={}& (P^R_{A}(t,s)-1)\cdot(1+st)^{n}  +(P^S_{B}(t,s)-1)\cdot (1+st)^{m} +\\ 
    & +t^{-1}[(1+st)^{m}-1][(1+st)^n-1]+1,
\end{split}
\end{equation}
that is,
\begin{multline*}
\beta^Q_{i,j} (A \times_K B) =\\
    \begin{cases}
   0 & \text{ if } j \le i, (i,j)\neq(0,0), \\
 1 &\text{ if } i=j=0, \\[.25cm]
    \begin{array}{l}
    \sum _{\ell =0}^{\min(i,m)-1} \binom{n}{[i-m]_++\ell}\beta_{\min(i,m)-\ell,i+1-\ell-[i-m]_+}^R(A)+\\
    \sum _{\ell =0}^{\min(i,n)-1} \binom{m}{[i-n]_++\ell}\beta_{\min(i,n)-\ell,i+1-\ell-[i-n]_+}^S(B) \\
    +\binom{m+n}{i+1} - \binom{m}{i+1} - \binom{n}{i+1}
    \end{array}& \text{ if } j = i+1, \\[.5cm]
     \begin{array}{l} \sum _{\ell =0}^{\min(i,m)-1} \binom{n}{[i-m]_++\ell}\beta_{\min(i,m)-\ell,j-\ell-[i-m]_+}^R(A)\\
     +\sum _{\ell =0}^{\min(i,n)-1} \binom{m}{[i-n]_++\ell}\beta_{\min(i,n)-\ell,j-\ell-[i-n]_+}^S(B) \end{array} & \text{ if } j \ge i+2. 
    \end{cases}
\end{multline*}
\end{corollary}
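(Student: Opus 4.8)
The plan is to combine the three results cited in the statement---Lemma~\ref{lem:Betti product ideal}, Theorem~\ref{thm:Betti fiber product}, and Proposition~\ref{betaTbetaR}---in the obvious way, and then verify that the resulting formula is correctly reorganized into cases. First I would record the Poincar\'e series identity by substituting the formulas of Proposition~\ref{betaTbetaR} for $\widetilde{P}^Q_A(t,s)$ and $\widetilde{P}^Q_B(t,s)$, and the formula of Lemma~\ref{lem:Betti product ideal} for $P^Q_{Q/\x\cap\y}(t,s)$, into the Poincar\'e series equation of Theorem~\ref{thm:Betti fiber product}; this is a direct substitution and yields \eqref{eq:Geller} immediately. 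Then I would extract the coefficient-wise statement: comparing coefficients of $s^jt^i$ on both sides of \eqref{eq:Geller} gives $\beta^Q_{i,j}(A\times_K B)$ as a sum of the corresponding coefficients of the three summands.

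The bulk of the work is then bookkeeping: unwinding what each of the three summands contributes in each of the three coefficient ranges $j\le i$ (with the $(0,0)$ exception), $j=i+1$, and $j\ge i+2$. For the term $t^{-1}[(1+st)^m-1][(1+st)^n-1]+1$, I would note that $[(1+st)^m-1][(1+st)^n-1]$ only has terms $s^at^a$ with $a\ge 2$, so after dividing by $t$ its coefficient of $s^jt^i$ is nonzero only when $j=i+1$, where it equals $\binom{m+n}{i+1}-\binom{m}{i+1}-\binom{n}{i+1}$ (this is exactly the ``In particular'' clause of Lemma~\ref{lem:Betti product ideal}), while the added $1$ accounts for the $(i,j)=(0,0)$ case. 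For $(P^R_A(t,s)-1)(1+st)^n$, subtracting $1$ removes precisely the $\beta^Q_{0,0}$ contribution, and then I would read off the coefficient of $s^jt^i$ using the expansion of $(1+st)^n$ together with the relation $\beta^Q_{i,j}(A)=\sum_\ell\binom{n}{[i-m]_++\ell}\beta^R_{\min(i,m)-\ell,\,j-\ell-[i-m]_+}(A)$ from Proposition~\ref{betaTbetaR}; this is already the double-sum expression appearing in the statement, and the analogous computation handles $B$. Matching these against the case division in Theorem~\ref{thm:Betti fiber product}---which tells us that in degrees $j\ge i+2$ only the $A$ and $B$ terms survive, in degree $j=i+1$ all three survive, and in degrees $j\le i$ everything vanishes except $\beta_{0,0}=1$---produces exactly the displayed formula for $\beta^Q_{i,j}(A\times_K B)$.

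The one genuinely delicate point, rather than a true obstacle, is checking index consistency in the case $j=i+1$: one must confirm that plugging $j=i+1$ into the general $j\ge i+2$ double sums for $A$ and $B$ gives exactly the two double sums written in the $j=i+1$ branch (with $j$ replaced by $i+1$), so that the only \emph{new} contribution in degree $i+1$ is the binomial term $\binom{m+n}{i+1}-\binom{m}{i+1}-\binom{n}{i+1}$ coming from $Q/\x\cap\y$. This is immediate once one observes that the formulas of Proposition~\ref{betaTbetaR} are uniform in $j$. Since all three ingredients are already proved, the proof is essentially a one-line appeal: ``Combining Lemma~\ref{lem:Betti product ideal}, Theorem~\ref{thm:Betti fiber product}, and Proposition~\ref{betaTbetaR}, and comparing coefficients, yields the stated formulas.'' I would write it at roughly that level of detail, perhaps spelling out the coefficient extraction for the $Q/\x\cap\y$ term since that is where the case distinction $j=i+1$ originates.
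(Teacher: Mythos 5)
Your proposal is correct and matches the paper, which derives this corollary exactly as you do: by substituting the formulas of Proposition~\ref{betaTbetaR} and Lemma~\ref{lem:Betti product ideal} into the Poincar\'e series identity of Theorem~\ref{thm:Betti fiber product} and reading off coefficients case by case. The only nitpick is your aside that subtracting $1$ from $P^R_A$ ``removes precisely the $\beta^Q_{0,0}$ contribution''---it in fact removes the whole diagonal $(1+st)^n$ (all $\beta^Q_{a,a}(A)$ terms), which is exactly what $\widetilde{P}^Q_A$ requires, so the conclusion is unaffected.
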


\begin{remark}
The identity \eqref{eq:Geller} extends \cite[Theorem 1.1]{G} to the graded case. It can be checked that the two results agree upon substituting $s=1$ in \eqref{eq:Geller}.
\end{remark}

We now turn to the graded Betti numbers of the connected sum. AG $K$-algebras with socle degree two have been classified by \cite{S}. Explicitly, if $A = R/\fa$ has $h$-vector $(1, n, 1)$ then, up to isomorphism, one has 
\[
\fa = (x_i x_j \; \mid \; 1 \le i \neq j \le n) + (x_1^2 -  x_2^2,\ldots,x_1^2-x_n^2).
\]
The graded minimal free resolution of $A$ as $R$-module has the form 
\[
0 \to R(-n-2) \to R(-n)^{\beta_{n-1}} \to \cdots \to R(-2)^{\beta_1} \to R \to A \to 0, 
\]
and a straightforward computation gives us
 $$\beta_i=\beta _{n-1-i}= i\binom{n}{i+1}+(n-i)\binom{n}{n-i+1} $$ 
for $1\le i \le n-1$.
Thus, it is harmless to consider AG $K$-algebras whose socle degree is at least three. 

\begin{theorem}
    \label{thm:Betti conn sum}
Assume that $A$ and $B$ are AG $K$-algebras such that $\reg(A)=\reg(B)=e\ge 3$. 
%Then, for any integer  $i \ge 0$, the graded Betti numbers of the connected sum $A \#_K B$ over the polynomial ring $Q$ with $\dim(Q)=m+n$ are given  by 
% Fred: as in the theorem for the fiber product, we are not talking about Betti numbers
% Fred: dim(Q)=m+n is a standing hypothesis throughout the section (recalled in the proof)
For any integer $i\ge 1$, there is an isomorphism of graded $K$-vector spaces
\begin{multline*} 
   \label{eq:Betti conn sum}
[\Tor_i^Q (A \#_K B, K)]_j  \cong\\
\begin{cases}
0 & \text{ if } j \le i \text{ and } (i, j) \neq (0, 0), \\[2pt]
K & \text{ if } (i, j) = (0, 0), \\[2pt]
[\Tor_i^Q (A , K)]_{i+1} \oplus [\Tor_i^Q (B , K)]_{i+1}  & \\
 \oplus  [\Tor_i^Q( Q/\x \cap \y, K)]_{i+1} & \text{ if } j = i+1, \\[2pt]
 [\Tor_{i}^Q (A , K)]_{j} \oplus [\Tor_{i}^Q (B , K)]_{j}  & \text{ if } i+2 \le j \le i + e -2, \\[2pt]
 %
 %[\Tor_{m+n-i}^Q (A , K)]_{m+n-i+1} \\
  %\oplus [\Tor_{m+n-i}^Q (B , K)]_{m+n-i+1}  & \\
  [\Tor_{i}^Q (A , K)]_{j} \oplus [\Tor_{i}^Q (B , K)]_{j} & \\
 \oplus  [\Tor_{m+n-i}^Q( Q/\x \cap \y, K)]_{m+n-i+1}& \text{ if } j = i+ e - 1, \\[2pt] 
 K & \text{ if } (i, j) = (m+n, e+m+n), \\[2pt]
 0 & \text{ if } j \ge e + i \text{ and} \\
 & (i, j) \neq (m+n, e+m+n);
\end{cases}
\end{multline*}
equivalently,
\begin{multline*}
P^Q_{A\#_KB}(t,s)=\widetilde{P}_A^Q(t,s)+\widetilde{P}_B^Q(t,s)+P^Q_{Q/\x\cap\y}(t,s)\\
+s^{m+n+e}t^{m+n}P^Q_{Q/\x\cap \y}(t^{-1},s^{-1}).
\end{multline*}
% AS: actually we use this in the proof. 
%In particular, one recovers that the socle degrees of $A$, $B$ and $A \#_K B$ coincide. 
\end{theorem}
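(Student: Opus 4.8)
The plan is to combine the defining short exact sequence of the connected sum with the Gorenstein self-duality of its minimal free resolution over $Q$. Taking $T=K$, $k=0$ and $d=e$ in \eqref{exactCS} produces a short exact sequence of graded $Q$-modules
\[
0 \longrightarrow K(-e) \longrightarrow A\times_K B \longrightarrow A\#_K B \longrightarrow 0 ,
\]
in which the first map is multiplication by the Thom class. Since $K$ is resolved over $Q$ by the Koszul complex on $x_1,\dots,x_m,y_1,\dots,y_n$, one has $[\Tor_i^Q(K(-e),K)]_j=0$ unless $j=i+e$. Applying $\Tor_\bullet^Q(-,K)$ and reading the long exact sequence in a fixed internal degree $j$, the only spots where the $K(-e)$ term can contribute are the pieces $[\Tor_i^Q(K(-e),K)]_j$ and $[\Tor_{i-1}^Q(K(-e),K)]_j$, which are nonzero only for $j=i+e$ and $j=i+e-1$ respectively. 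Hence for every $j\le i+e-2$ there is an isomorphism $[\Tor_i^Q(A\#_K B,K)]_j\cong[\Tor_i^Q(A\times_K B,K)]_j$. Together with the fact that $A\#_K B$ is cyclic over $Q$ (so $\beta^Q_{0,0}=1$), feeding this into \Cref{thm:Betti fiber product} produces precisely the claimed values of $[\Tor_i^Q(A\#_K B,K)]_j$ for $j\le i$, for $j=i+1$, and for $i+2\le j\le i+e-2$.

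The ranges $j\ge i+e-1$ I would handle by duality. The ring $A\#_K B$ is a graded Artinian Gorenstein quotient of the polynomial ring $Q$ in $m+n$ variables with socle degree $e$, and its defining ideal is generated in degrees $\ge 2$: the generators $x_iy_j$ of $\x\cap\y$ are quadrics, $\fa$ and $\fb$ contain no linear forms, and the Thom-class relation has degree $e\ge 3$. Thus $A\#_K B$ has embedding dimension $m+n$, so $\mathrm{pd}_Q(A\#_K B)=m+n$ and its minimal free resolution over $Q$ is self-dual:
\[
\beta^Q_{i,j}(A\#_K B)=\beta^Q_{m+n-i,\;m+n+e-j}(A\#_K B).
\]
The same numerical self-duality holds for $A$ and for $B$ as $Q$-modules, since $A\cong Q/(\fa+\y)$ and $B\cong Q/(\x+\fb)$ are quotients of $Q$ by Gorenstein ideals of height $m+n$ with socle degree $e$; thus $\beta^Q_{i,j}(A)=\beta^Q_{m+n-i,\,m+n+e-j}(A)$ and likewise for $B$.

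Given $j\ge i+e-1$, set $i'=m+n-i$ and $j'=m+n+e-j$, so that $j'\le i'+1\le i'+e-2$ and the first step (together with the trivial value at $(0,0)$) applies to $[\Tor_{i'}^Q(A\#_K B,K)]_{j'}$. Transporting the result back through the three self-dualities: for $j=i+e-1$ (where $j'=i'+1$) one obtains $\beta^Q_{i,j}(A\#_K B)=\beta^Q_{i,j}(A)+\beta^Q_{i,j}(B)+\beta^Q_{m+n-i,\,m+n-i+1}(Q/\x\cap\y)$; for $j=i+e$ (where $j'=i'$) the first step returns $0$ unless $i'=0$, i.e.\ $(i,j)=(m+n,e+m+n)$, where the $\Tor$ is $K$; and for $j\ge i+e+1$ (where $j'<i'$) it returns $0$. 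This supplies all remaining cases of the stated isomorphism, and summing the resulting isomorphisms over all $(i,j)$ yields the asserted Poincar\'e series identity.

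I expect the self-duality input to be the real obstacle. The vanishing $\beta^Q_{i,i+e}(A\#_K B)=0$ for $i<m+n$ cannot be extracted from the long exact sequence alone — the connecting homomorphism at that spot is not visibly surjective — and genuinely uses that the resolution of $A\#_K B$ is self-dual, which in turn rests on the modest but essential observation that the defining ideal of $A\#_K B$ has initial degree at least two. Everything else is routine bookkeeping with the three duality identities and \Cref{thm:Betti fiber product}.
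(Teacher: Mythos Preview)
Your proof is correct and follows essentially the same approach as the paper: both use the long exact $\Tor$ sequence arising from $0\to K(-e)\to A\times_K B\to A\#_K B\to 0$ together with \Cref{thm:Betti fiber product} to handle $j\le i+e-2$, and then invoke the Gorenstein self-duality of $A\#_K B$, $A$, and $B$ over $Q$ to treat the remaining degrees $j\ge i+e-1$ (the hypothesis $e\ge 3$ ensuring these two ranges do not overlap). The only cosmetic difference is that the paper cites $\reg(A\#_K B)=e$ directly for the vanishing at $j\ge i+e+1$, whereas you obtain it via duality from the vanishing at $j'<i'$; also, your remark about the initial degree of the defining ideal is harmless but unnecessary, since Auslander--Buchsbaum already forces $\mathrm{pd}_Q(A\#_K B)=m+n$ for any Artinian quotient.
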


\begin{proof}
For a $Q$-module $M$, we write $\Tor_i^Q(M)$  instead of $\Tor_i^Q (M, K)$. 
Consider the exact sequence \eqref{exactCS} of graded $Q$-modules
\[
0  \to K (- e) \to A \times_K B  \to A  \#_K B  \to 0. 
\]
Its long exact $\Tor$ sequence gives exact sequences 
\begin{multline*}
[\Tor_i^Q (K)]_{j-e} \to [\Tor_i^Q (A \times_K B)]_j \to\\
[\Tor_i^Q (A \#_K B)]_j \to [\Tor_{i-1}^Q (K)]_{j - e}. 
\end{multline*}
Since $\Tor_i^Q (K)$ is concentrated in degree $i$ we conclude that 
\[
[\Tor_i^Q (A \#_K B)]_j  \cong [\Tor_i^Q (A \times_K B)]_j  
\] 
if $j \notin \{e + i - 1, e + i\}$. Combined with \Cref{thm:Betti fiber product}, this determines $[\Tor_i^Q (A \#_K B)]_j$ if $j \le e + i-2$. 

Using the fact that  $\reg (A  \#_K B) = \reg A = \reg B=e$ which can be deduced from \eqref{HilbertCS}, we know that $[\Tor_i^Q (A \#_K B)]_j = 0$ if $j \ge e + i + 1$. It remains to determine $[\Tor_i^Q (A \#_K B)]_j$ if $j \in \{e + i - 1,e + i\}$. To this end we utilize the fact that $A \#_K B$ is Gorenstein. Thus, its graded minimal free resolution is symmetric. In particular, since $\dim Q=m+n$, one has
\[
[\Tor_i^Q (A \#_K B)]_j  \cong [\Tor_{m+n-i}^Q (A \#_K B)]_{e + m+n - j}.
\]
Similarly, for $A$ and $B$ we have
$
\Tor_i^Q (A)_j  \cong [\Tor_{m+n-i}^Q (A)]_{e + m+n - j}$  and 
$\Tor_i^Q (B)_j  \cong [\Tor_{m+n-i}^Q (B)]_{e + m+n - j}$.

Combined with \Cref{thm:Betti fiber product} and using $e \ge 3$, which implies that the degrees $e+i-1, e+i$ are not self-dual under the isomorphisms given above, the claim regarding $\Tor$ modules follows. 

The Poincar\'e series formula follows from the above considerations and the identities
\begin{eqnarray*}
&\sum_{i=0}^{m+n} \beta_{m+n-i, m+n-i+1}(Q/\x\cap\y)t^{i}s^{i+e-1} \\
=& \sum_{j=0}^{m+n} \beta_{j, j+1}(Q/\x\cap\y)  t^{m+n-j}s^{m+n-j+e-1}\\
=&t^{m+n}s^{m+n+e}\sum_{j=0}^{m+n} \beta_{j, j+1}(Q/\x\cap\y) t^{-j}s^{-j-1}\\
=&t^{m+n}s^{m+n+e} P^Q_{Q/\x\cap\y}(t^{-1},s^{-1}).
\end{eqnarray*}
\end{proof}

Using again Lemma \ref{lem:Betti product ideal} and Proposition \ref{betaTbetaR}, we can convert Theorem \ref{thm:Betti conn sum} into explicit formulas depending only on 
the Betti numbers of $A$ as an $R$-module and of $B$ as an $S$-module.

\begin{corollary} With the above notation we have:

\hspace{2em}\hspace{-2em}
   \label{eq:Betti conn sum}
$\beta _{i,j}^Q(A  \#_K B)= \nonumber\\
  \begin{cases}
0 & \text{ if } j\le i, (i, j) \neq (0, 0), \\[.4cm]
1 & \text{ if } (i, j) = (0, 0), \\[2pt]
 \sum _{\ell =0}^{min(i,m)-1} \binom{n}{[i-m]_+ +\ell}\beta_{min(i,m)-\ell,i+1-\ell-[i-m]_+}^R(A)\\+ \sum _{\ell =0}^{min(i,n)-1} \binom{m}{[i-n]_+ +\ell}\beta_{min(i,n)-\ell,i+1-\ell-[i-n]_+}^S(B) \\ +\binom{m+n}{i+1} - \binom{m}{i+1} - \binom{n}{i+1}
     & \text{ if } j = i+1, \\[.4cm]
\sum _{\ell =0}^{min(i,m)-1} \binom{n}{[i-m]_+ +\ell}\beta_{min(i,m)-\ell,j-\ell-[i-m]_+}^R(A)\\+ \sum _{\ell =0}^{min(i,m)-1} \binom{m}{[i-n]_+ +\ell}\beta_{min(i,n)-\ell,j-\ell-[i-n]_+}^S(B)   & \text{ if } i+2 \le j \le i + e -2, \\[.5cm]
 \sum _{\ell =0}^{min(i,m)-1} \binom{n}{[i-m]_+ +\ell}\beta_{min(i,m)-\ell,i+1-\ell-[i-m]_+}^R(A)\\+ \sum _{\ell =0}^{min(i,n)-1} \binom{m}{[i-n]_+ +\ell}\beta_{min(i,n)-\ell,i+1-\ell-[i-n]_+}^S(B) \\+ \binom{m+n}{m+n-i-1} - \binom{m}{m-i-1} - \binom{n}{n-i-1} & \text{ if } i\leq m+n, j = i+ e - 1, \\[2pt] 
 1 & \text{ if } (i, j) = (m+n, e+m+n), \\[.4cm]
 0 & \text{ if } j \ge e + i \text{ and} \\
 & (i, j) \neq (m+n, e+m+n).
\end{cases}
$
\end{corollary}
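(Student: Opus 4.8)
The plan is to obtain each $\beta_{i,j}^Q(A\#_KB)=\dim_K[\Tor_i^Q(A\#_KB,K)]_j$ by reading off the corresponding graded vector space from \Cref{thm:Betti conn sum} and then rewriting the summands that occur there in intrinsic terms: using \Cref{betaTbetaR} to convert $\beta_{\bullet,\bullet}^Q(A)$ and $\beta_{\bullet,\bullet}^Q(B)$ into $\beta_{\bullet,\bullet}^R(A)$ and $\beta_{\bullet,\bullet}^S(B)$, and using \Cref{lem:Betti product ideal} to evaluate the contributions coming from $Q/\x\cap\y$. Indeed, inspecting \Cref{thm:Betti conn sum}, every case presents $[\Tor_i^Q(A\#_KB,K)]_j$ as a direct sum whose summands lie among $K$, $[\Tor_i^Q(A,K)]_j$, $[\Tor_i^Q(B,K)]_j$, $[\Tor_i^Q(Q/\x\cap\y,K)]_{i+1}$, and $[\Tor_{m+n-i}^Q(Q/\x\cap\y,K)]_{m+n-i+1}$, and these are precisely the quantities whose dimensions are computed by the two cited results (with $m=\dim R$ and $n=\dim S$). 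I would proceed case by case, following the order in \Cref{thm:Betti conn sum}.

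The degenerate cases are immediate: $[\Tor_i^Q(A\#_KB,K)]_j=0$ when $j\le i$ and $(i,j)\neq(0,0)$, and when $j\ge e+i$ and $(i,j)\neq(m+n,e+m+n)$; and this space equals $K$, of dimension $1$, when $(i,j)=(0,0)$ or $(i,j)=(m+n,e+m+n)$. Here one should observe that the hypothesis $e\ge 3$ is exactly what forces the internal degrees $i+1$, $i+e-1$, $i+e$ to be pairwise distinct, so the cases of \Cref{thm:Betti conn sum} do not overlap; when $e=3$ the middle band $i+2\le j\le i+e-2$ is simply empty.

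For $i+2\le j\le i+e-2$ one has $\beta_{i,j}^Q(A\#_KB)=\beta_{i,j}^Q(A)+\beta_{i,j}^Q(B)$, and plugging in the two identities from \Cref{betaTbetaR} evaluated at this $j$ gives the stated double sums over $\ell$. For $j=i+1$ one adds the term $\dim_K[\Tor_i^Q(Q/\x\cap\y,K)]_{i+1}=\binom{m+n}{i+1}-\binom{m}{i+1}-\binom{n}{i+1}$ supplied by \Cref{lem:Betti product ideal}, the $A$- and $B$-contributions being as above, specialized to $j=i+1$. For $j=i+e-1$ one adds instead the ``dual'' term $\dim_K[\Tor_{m+n-i}^Q(Q/\x\cap\y,K)]_{m+n-i+1}$; evaluating \Cref{lem:Betti product ideal} at homological index $m+n-i$ and simplifying the resulting binomial coefficients --- using the convention that $\binom{a}{b}=0$ for $b<0$ or $b>a$, together with $\binom{a}{b}=\binom{a}{a-b}$ --- yields the displayed binomial expression. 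Outside the range $i\le m+n$ in this last case there is nothing to prove, since then $m+n-i<0$ and the projective dimensions of $A$ and $B$ over $Q$ are at most $m+n$, so all three contributions vanish.

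The whole argument is a direct combination of \Cref{thm:Betti conn sum}, \Cref{betaTbetaR}, and \Cref{lem:Betti product ideal}, so no real obstacle arises. The one place demanding care --- and where an error is easiest to make --- is the $j=i+e-1$ case, where $Q/\x\cap\y$ contributes ``dualized'' in homological degree $m+n-i$ rather than $i$: one must select the correct binomial simplification and remain consistent about out-of-range binomials. I would cross-check this case numerically against \Cref{ex2} before finalizing the formulas.
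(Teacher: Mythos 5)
Your proposal is exactly the paper's own argument: the corollary is obtained by taking dimensions in \Cref{thm:Betti conn sum} and substituting Proposition~\ref{betaTbetaR} for the $A$- and $B$-contributions and \Cref{lem:Betti product ideal} for the contributions of $Q/\x\cap\y$, including the dual term $\dim_K[\Tor^Q_{m+n-i}(Q/\x\cap\y,K)]_{m+n-i+1}$ in the case $j=i+e-1$. The one caveat concerns precisely the step you flagged: evaluating \Cref{lem:Betti product ideal} at homological index $m+n-i$ gives $\binom{m+n}{m+n-i+1}-\binom{m}{m+n-i+1}-\binom{n}{m+n-i+1}=\binom{m+n}{i-1}-\binom{m}{i-n-1}-\binom{n}{i-m-1}$, which is \emph{not} the binomial expression printed in the statement, and in that same case the internal degrees in the $A$- and $B$-sums should be $j-\ell-[i-m]_+$ and $j-\ell-[i-n]_+$ rather than $i+1-\ell-\cdots$. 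So your planned cross-check against \Cref{ex2} is indeed warranted --- for instance at $(i,j)=(1,8)$ there the true value is $\beta_{4,5}(Q/\x\cap\y)=1$ while the printed binomials give $6$ --- but it would expose typos in the displayed formulas, not a flaw in your derivation.
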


%\vskip 2mm
%\noindent
\subsection{Arbitrary many summands}
For $i = 1,\ldots,r$, consider standard graded polynomial rings $R_i=K[x_{i,1},\ldots, x_{i,n_i}]$ with irrelevant maximal ideals $\x_i = (x_{i,1},\ldots, x_{i,n_i})$.
%For every $i=1,\dots ,r$ we consider a standard graded ring $A_i = R_i/\fa_i$. %\AS{we don't need $A_i$ to be Artinian Gorenstein or have the same regularity until we take connected sums.}
Also, let $Q =  R_1 \otimes_K \cdots \otimes_K R_r$. In the following, we abuse notation to write %$\fa_i$ and
$\x_i$ to also denote the extensions of these ideals to ideals of $Q$.

\begin{lemma}
   \label{lem:annihilator}
Consider ideals $I_1,\ldots,I_r$ of a commutative ring $P$ and the map $\varphi \colon P/\bigcap_{j=1}^r I_j \stackrel{\varphi}{\longrightarrow} \bigoplus_{j=1}^r P/I_j$, where $\varphi$ maps the image of $p \in P$  in $P/\bigcap_{j=1}^r I_j$ onto the image of $(p,\ldots,p) \in P^r$ in $\bigoplus_{j=1}^r P/I_j$. The annihilator of $\coker \varphi$ as a $P$-module is $\bigcap_{i=1}^r \big ( I_i + \bigcap_{j \neq i} I_j \big )$. 

\end{lemma}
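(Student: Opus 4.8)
The plan is to identify $\coker\varphi$ explicitly and then compute its annihilator directly. First I would observe that $\varphi$ is injective (its kernel is the image of $\bigcap_j I_j$, which is zero in $P/\bigcap_j I_j$), so we have the short exact sequence
\[
0 \to P/\bigcap_{j=1}^r I_j \xrightarrow{\varphi} \bigoplus_{j=1}^r P/I_j \to \coker\varphi \to 0.
\]
A clean way to understand $\coker\varphi$ is via the comparison map to the ``iterated difference'' map $\psi$ appearing in the commented-out generalized Mayer--Vietoris sequence: namely $\psi\colon \bigoplus_{j=1}^r P/I_j \to \bigoplus_{j=2}^r P/(I_1+I_j)$ sending the image of $(p_1,\dots,p_r)$ to the image of $(p_1-p_2,\dots,p_1-p_r)$. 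Since $\im\varphi \subseteq \ker\psi$, the map $\psi$ factors through $\coker\varphi$; but I do not want to rely on $\psi$ being surjective (which needs extra hypotheses). Instead I would work with $\coker\varphi$ directly.

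Next I would compute the annihilator. An element $a \in P$ annihilates $\coker\varphi$ if and only if for every $(p_1,\dots,p_r) \in P^r$ the tuple $(ap_1,\dots,ap_r)$ lies in $\im\varphi + \bigoplus_j I_j$, i.e.\ there is $q \in P$ with $ap_j - q \in I_j$ for all $j$. Taking $p_i = 1$ and $p_j = 0$ for $j \neq i$ (and ranging over $i$), one sees $a \in \Ann(\coker\varphi)$ forces, for each $i$, the existence of $q_i$ with $a - q_i \in I_i$ and $q_i \in I_j$ for all $j \neq i$; that is, $a \in I_i + \bigcap_{j\neq i} I_j$ for every $i$, hence $a \in \bigcap_{i=1}^r\big(I_i + \bigcap_{j\neq i} I_j\big)$. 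Conversely, if $a$ lies in this intersection, then for each $i$ write $a = \alpha_i + \beta_i$ with $\alpha_i \in I_i$ and $\beta_i \in \bigcap_{j\neq i} I_j$, and given any $(p_1,\dots,p_r)$, I would exhibit the required $q = \sum_{i=1}^r \beta_i p_i$: modulo $I_i$ one has $\beta_k p_k \equiv 0$ for $k \neq i$ (since $\beta_k \in I_i$) while $\beta_i p_i \equiv a p_i$ (since $a - \beta_i = \alpha_i \in I_i$), so $q \equiv a p_i \pmod{I_i}$ for every $i$, which is exactly the condition that $(ap_1,\dots,ap_r) - \varphi(q) \in \bigoplus_j I_j$. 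This shows $a \in \Ann(\coker\varphi)$.

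The main subtlety — the step I expect to require the most care — is the converse direction: checking that the single element $q = \sum_i \beta_i p_i$ simultaneously works for all $r$ congruences, which hinges on the key fact that $\beta_k \in I_i$ whenever $k \neq i$, a consequence of $\beta_k \in \bigcap_{j\neq k} I_j$. I would also make sure the argument is stated at the level of a general commutative ring $P$ (no Noetherian or finiteness hypotheses are needed) and note that no assumption such as $I_i + I_j = P$ is required, in contrast to the commented-out Mayer--Vietoris lemma. Once both inclusions are established, the equality $\Ann(\coker\varphi) = \bigcap_{i=1}^r\big(I_i + \bigcap_{j\neq i} I_j\big)$ follows, completing the proof.
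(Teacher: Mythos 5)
Your proof is correct and follows essentially the same route as the paper: the forward inclusion is obtained by testing $a$ on the images of the standard generators $(0,\dots,1,\dots,0)$, and the reverse inclusion from the decomposition $a=\alpha_i+\beta_i$ with $\alpha_i\in I_i$, $\beta_i\in\bigcap_{j\neq i}I_j$. Your explicit lift $q=\sum_i\beta_i p_i$ for an arbitrary tuple merely unrolls the paper's reduction to checking the generators, so nothing needs to be changed.
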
 

\begin{proof}
First, we prove the inclusion $\Ann (\coker \varphi) \subseteq \bigcap_{i=1}^r \big ( I_i + \bigcap_{j \neq i} I_j \big )$. Consider $a \in \Ann (\coker \varphi)$ and let $m$ be the image of $(1,0,\dots,0)\in P^r$ in $\bigoplus_{j=1}^r P/I_j$, so that $am \in \im \varphi$. Then there exists $p\in \bigcap_{j\neq 1} I_j$ such that $a-p\in I_1$, which shows that $a\in I_1 +\bigcap_{j\neq 1} I_j$. Repeating the argument with $2\leq i\leq r$ proves the desired inclusion.

For the reverse inclusion, observe that every $a\in I_1 +\bigcap_{j\neq 1} I_j$ annihilates the image of $(1,0,\dots,0)\in P^r$ in $\coker \varphi$, and similarly for $2\leq i\leq r$.
% The inclusion $\Ann (\coker \varphi) \subseteq \bigcap_{i=1}^r \big ( I_i + \bigcap_{j \neq i} I_j \big )$ is straight-forward. For the reverse inclusion, consider the annihilator of the image of the $j$-th canonical generator of $P^r$ in $\coker \varphi$.  
\end{proof} 

\Cref{lem:annihilator} will be applied to the following family of ideals.

\begin{lemma}
   \label{lem:special ideals}
Define the ideals $I_j$ of $Q$ by 
\[
I_j = \x_1 + \cdots + \widehat{\x}_j + \cdots + \x_r
\] 
so that $Q/I_j \cong R_j$. Then one has
\[
\sum_{1\leq i \neq j\leq r} \x_i \cap \x_j = \bigcap_{j=1}^r I_j. 
\]
\end{lemma}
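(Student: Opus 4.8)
The plan is to prove the two-sided ideal equality
\[
\sum_{1\leq i \neq j\leq r} \x_i \cap \x_j = \bigcap_{j=1}^r I_j
\]
by working with the monomial generators, since all ideals involved are generated by variables (or products/intersections thereof) in the polynomial ring $Q = R_1\otimes_K\cdots\otimes_K R_r$. Write $V_k = \{x_{k,1},\ldots,x_{k,n_k}\}$ for the set of variables coming from $R_k$. Then $\x_k = (V_k)$, and $I_j = \sum_{k\neq j}\x_k = (\,\bigcup_{k\neq j} V_k\,)$. The first observation I would record is that $\x_i\cap\x_j$, for $i\neq j$, is generated by the pairwise products $\{x_{i,a}x_{j,b} : a,b\}$; this is the standard fact that the intersection of two monomial ideals is generated by the pairwise least common multiples of their generators, and here $\mathrm{lcm}(x_{i,a},x_{j,b}) = x_{i,a}x_{j,b}$ since the variables are distinct. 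Hence the left-hand side is the monomial ideal generated by $\{x_{i,a}x_{j,b} : i\neq j\}$.

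Next I would compute the right-hand side. Since $I_j$ is generated by a subset of the variables, $\bigcap_{j=1}^r I_j$ is again a monomial ideal, and a monomial $u$ lies in $\bigcap_j I_j$ iff for every $j$ the monomial $u$ is divisible by some variable in $\bigcup_{k\neq j}V_k$, i.e. iff the support of $u$ meets $\bigcup_{k\neq j}V_k$ for every $j$. Equivalently, letting $S(u) = \{k : \mathrm{supp}(u)\cap V_k\neq\emptyset\}$ be the set of ``blocks'' appearing in $u$, the condition is that $S(u)\not\subseteq\{j\}$ for every $j$, which is exactly $|S(u)|\geq 2$. So $\bigcap_{j=1}^r I_j$ is the monomial ideal consisting of all monomials involving variables from at least two distinct blocks $R_i, R_j$. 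For the inclusion $\subseteq$: each generator $x_{i,a}x_{j,b}$ with $i\neq j$ involves two blocks, so it lies in every $I_{\ell}$ (for each $\ell$, at least one of $i,j$ differs from $\ell$), hence in the intersection; so the left-hand side is contained in the right-hand side. For $\supseteq$: any monomial $u$ with $|S(u)|\geq 2$ picks two distinct blocks $i,j\in S(u)$, so $u$ is divisible by some $x_{i,a}x_{j,b}$, placing $u$ in $\x_i\cap\x_j\subseteq$ LHS. This gives equality.

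There is essentially no hard step here; the only thing to be careful about is the monomial-ideal bookkeeping, in particular justifying that intersections of monomial ideals (and of ideals generated by variables) are themselves monomial and that membership can be checked on monomials. If one prefers to avoid invoking monomial-ideal generalities, I would note the alternative: the equality can be proved by induction on $r$ using the two-factor identity $\x_r\cap(\x_1+\cdots+\x_{r-1}) + \big(\sum_{1\le i\ne j\le r-1}\x_i\cap\x_j\big) = \sum_{1\le i\ne j\le r}\x_i\cap\x_j$ (trivial) combined with an analysis of how the $I_j$ for the $r$-factor case relate to those for the $(r-1)$-factor case. I expect the direct monomial argument of the previous paragraph to be cleaner, so that is the route I would write up, reducing everything to the two elementary containments established above.
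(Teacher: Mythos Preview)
Your proof is correct. The direct monomial argument works exactly as you describe: both sides are monomial ideals, and a monomial lies in either side precisely when its support meets at least two of the variable blocks $V_k$.

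The paper takes the alternative route you sketch at the end, namely induction on $r$. The base case $r=2$ is immediate, and for the inductive step the paper writes $I_j=\tilde I_j+\x_r$ for $1\le j\le r-1$ (where $\tilde I_j$ denotes the corresponding ideal built from $\x_1,\ldots,\x_{r-1}$), computes
\[
\bigcap_{j=1}^r I_j=\Bigl(\bigcap_{j=1}^{r-1}\tilde I_j+\x_r\Bigr)\cap I_r,
\]
applies the inductive hypothesis to $\bigcap_{j=1}^{r-1}\tilde I_j$, and then distributes the intersection with $I_r$ using $\x_i\cap\x_j\subseteq I_r$ for $i,j<r$. Your approach is more direct and arguably cleaner for this particular lemma, since it avoids the modular-law manipulations and makes the combinatorial content transparent. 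The inductive argument has the minor advantage of not explicitly invoking facts about monomial ideals (membership tested on monomials, intersections generated by lcm's), though these facts are entirely standard.
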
 

\begin{proof}
We proceed by induction on $r \ge 2$, the base case follows immediately from the definitions.

Assume the statement holds for $r-1$, i.e.,
\[\sum_{1\leq i \neq j\leq r-1} \x_i \cap \x_j = \bigcap_{j=1}^{r-1} \tilde{I}_j,\]
where $\tilde{I}_j = \x_1 + \cdots + \widehat{\x}_j + \cdots + \x_{r-1}$. Then we have
\begin{equation*}
    \begin{split}
        &\bigcap_{j=1}^r I_j = \bigcap_{j=1}^{r-1} I_j \cap I_r =
        \bigcap_{j=1}^{r-1} (\tilde{I}_j +\x_r) \cap I_r =
        \left(\bigcap_{j=1}^{r-1} \tilde{I}_j +\x_r \right) \cap I_r \\
        =&\left(\sum_{1\leq i \neq j\leq r-1} \x_i \cap \x_j +\x_r \right) \cap I_r =
        \sum_{1\leq i \neq j\leq r-1} \x_i \cap \x_j
        +\sum_{1\leq i \leq r-1} \x_i \cap \x_r
    \end{split}
\end{equation*}
because $\x_i \cap \x_j \subset I_r$ whenever $1\leq i\neq j\leq r-1$.
\end{proof}
Combining \Cref{lem:annihilator} with \Cref{lem:special ideals}, we obtain: 

\begin{corollary}\label{2-linear_cor}
There is an exact sequence of graded $Q$-modules 
\begin{equation}
\label{eq:2-linear_cor}
0 \to Q/\left(\sum_{1\leq i \neq j\leq r} \x_i \cap \x_j\right) \to \bigoplus_{j=1}^r R_j \to K^{r-1} \to 0. 
\end{equation}
\end{corollary}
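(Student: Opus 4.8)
The plan is to combine the two previous lemmas with the multi-factor fiber product presentation already established. By \Cref{lem:special ideals}, the ideal $\sum_{1\leq i\neq j\leq r}\x_i\cap\x_j$ equals $\bigcap_{j=1}^r I_j$, so the quotient appearing on the left of \eqref{eq:2-linear_cor} is exactly $Q/\bigcap_{j=1}^r I_j$, which is the source of the map $\varphi$ in \Cref{lem:annihilator}. Since $Q/I_j\cong R_j$, the middle term $\bigoplus_{j=1}^r R_j$ is precisely the target $\bigoplus_{j=1}^r Q/I_j$ of $\varphi$, and $\varphi$ is the diagonal inclusion $p\mapsto(p,\dots,p)$. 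Thus the first two terms of \eqref{eq:2-linear_cor}, together with the map between them, are literally $\varphi$, and what remains is to identify $\coker\varphi$ with $K^{r-1}$ as a graded $Q$-module.

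First I would observe that $\varphi$ is injective: this is built into \Cref{lem:annihilator}'s setup, or alternatively it is clear because an element of $Q$ vanishing modulo every $I_j$ vanishes modulo their intersection, so the kernel is zero by construction of the quotient. Next, to compute the cokernel, I would note that \Cref{lem:annihilator} gives $\Ann(\coker\varphi)=\bigcap_{i=1}^r\big(I_i+\bigcap_{j\neq i}I_j\big)$. Each $I_i+\bigcap_{j\neq i}I_j$ contains $\x_i$ (from the second summand, since $\bigcap_{j\neq i}I_j\supseteq\x_i$ as $\x_i\subseteq I_j$ for every $j\neq i$) and also contains $\x_1+\cdots+\widehat{\x}_i+\cdots+\x_r=I_i$, hence contains the full maximal ideal $\x_1+\cdots+\x_r$ of $Q$. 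Therefore $\coker\varphi$ is annihilated by the homogeneous maximal ideal and is a (graded) $K$-vector space. Its dimension follows either from the short exact sequence of Hilbert functions — $\dim_K\coker\varphi$ equals $\sum_j\dim_K(Q/I_j)_0-\dim_K(Q/\bigcap I_j)_0$ in degree $0$ and is zero in all positive degrees, since all three modules agree with $Q$ in positive degrees modulo their ideals which start in degree... — more cleanly, since each $R_j$ and $Q/\bigcap I_j$ have the same graded pieces as $Q/(\text{ideal generated in degree }\geq 1)$, the cokernel is concentrated in degree $0$ with dimension $r-1$. The cleanest argument: restrict $\varphi$ to degree $0$, where it is the diagonal map $K\to K^r$, whose cokernel is $K^{r-1}$; in positive degrees $\varphi$ is an isomorphism because $(R_j)_i=(Q/I_j)_i$ and $(Q/\bigcap I_j)_i$ all equal $(Q)_i$ modulo a degree-$\geq 1$ phenomenon — indeed in each degree $i\geq 1$, the map $Q_i/(\bigcap I_j)_i\to\bigoplus Q_i/(I_j)_i$ is injective with the appropriate cokernel, which one checks is zero by a monomial-basis count.

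The main obstacle I anticipate is making the degree-$i$ computation for $i\geq 1$ fully rigorous without hand-waving: one must verify that the diagonal map $Q/\bigcap_j I_j\to\bigoplus_j Q/I_j$ is surjective in every positive degree. This is most transparently done on monomial bases: a monomial of $Q$ of positive degree lies in all but at most one of the $I_j$ (it lies in $I_j$ unless all its variables come from $R_j$), so its images in the various $Q/I_j$ are zero except possibly one, and that one image is hit by the diagonal of a suitable monomial of $Q/\bigcap I_j$; reconciling the overlaps then gives surjectivity. Alternatively, and more economically, I would simply invoke \eqref{exactFPr} from \Cref{lem: multifactor FP presentation}: that lemma already records the exact sequence $0\to A_1\times_K\cdots\times_K A_r\to A_1\oplus\cdots\oplus A_r\to K^{r-1}\to 0$, and specializing to $A_j=R_j$ (i.e. $\fa_j=0$) gives exactly \eqref{eq:2-linear_cor}, since in that case $A_1\times_K\cdots\times_K A_r\cong Q/\sum_{i\neq j}(\x_i\cap\x_j)$ by the presentation in \Cref{lem: multifactor FP presentation}. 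This reduces the whole corollary to a citation of the already-proved lemma together with \Cref{lem:special ideals}, and I would present it that way, using \Cref{lem:annihilator} only to remark on the structure of the cokernel.
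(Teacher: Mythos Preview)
Your proposal is correct, and you actually offer two routes. Your first route---compute $\Ann(\coker\varphi)$ via \Cref{lem:annihilator} and then determine the $K$-dimension of the cokernel---is exactly the paper's approach, but the paper dispatches the dimension count more cleanly than your degree-by-degree analysis: it simply observes that $\im\varphi$ is cyclic with a generator that serves as a minimal generator of $\bigoplus_j R_j$, so $\coker\varphi$ is minimally generated by $r-1$ elements of degree zero; combined with $\Ann(\coker\varphi)=\fm$ this forces $\coker\varphi\cong K^{r-1}$. That one-line observation replaces your monomial-basis surjectivity check entirely.

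Your second route---specialize \eqref{exactFPr} from \Cref{lem: multifactor FP presentation} to $\fa_j=0$, so that $A_j=R_j$ and the presentation $J=\sum_{i\neq j}(\x_i\cap\x_j)$---is a genuinely different and shorter argument than the paper's. It makes \Cref{lem:annihilator} and \Cref{lem:special ideals} unnecessary for this corollary (though \Cref{lem:special ideals} is still needed downstream in \Cref{lem:Betti product ideal_many summands}). The paper instead derives the corollary from the two preceding lemmas, which has the minor expository benefit of justifying why those lemmas were stated, but your specialization is logically tighter.
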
 

\begin{proof} 
Consider the ideal $I_1,\ldots,I_r$ of $Q$ as defined in \Cref{lem:special ideals}. Observe that $Q/I_j \cong R_j$ and $\bigcap_{j=1}^r I_j = \sum_{1\leq i \neq j\leq r} \x_i \cap \x_j$. Thus, the map in \Cref{lem:annihilator} becomes $\varphi \colon Q/\left(\sum_{1\leq i \neq j\leq r} \x_i \cap \x_j \right) \to \bigoplus_{j=1}^r R_j.$ 

The definition of the ideals $I_j$ implies that for each $i$, $I_i + \bigcap_{j \neq i} I_j$ is the maximal ideal $\fm$ generated by all the variables of $Q$. Hence, \Cref{lem:annihilator} shows that $\Ann (\coker \varphi) = \fm$. 

Notice that $\im \varphi$ is a cyclic $Q$-module whose minimal generator can also be taken as a minimal generator of $\bigoplus_{j=1}^r R_j$. It follows that $\coker \varphi$ is minimally generated by $r-1$ elements of degree zero. Since $\Ann (\coker \varphi) = \fm$, we conclude that $\coker \varphi \cong K^{r-1}$, which completes the argument. 
\end{proof}

%{\bf end proposal UN} 

We compute the Betti numbers for the leftmost term in the short exact sequence \eqref{eq:2-linear_cor}.

\begin{lemma}\label{lem:Betti product ideal_many summands}
    The ideal $\sum_{i \neq j} \x_i \cap \x_j $ has a 2-linear minimal free resolution, and for $t\geq 1$ we have that 
\begin{align*} 
\hspace{2em}&\hspace{-2em}
\left[\Tor_t^Q( Q/\sum_{i \neq j} \x_i \cap \x_j, K) \right]_{t+1}\cong  \\
& \coker \left ( \left[\bigoplus_{j=1}^r\Tor^Q_{t+1} (Q/I_j, K) \right]_{t+1} \to \left[\bigoplus_{j=1}^{r-1}\Tor^Q_{t+1} (K, K) \right]_{t+1}  \right ).
\end{align*}
Thus with $N=n_1+\cdots+n_r$,
\[
\beta_{t,t+1}^Q\left(Q/\sum_{i \neq j} \x_i \cap \x_j\right)=(r-1)\binom{N}{t+1}-\sum_{k=1}^r\binom{N-n_k}{t+1}.
\]
\end{lemma}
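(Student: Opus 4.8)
The plan is to mimic the two-factor argument in \Cref{lem:Betti product ideal}, using the short exact sequence \eqref{eq:2-linear_cor} from \Cref{2-linear_cor} as the replacement for the Mayer--Vietoris sequence. First I would record that the middle term $\bigoplus_{j=1}^r R_j = \bigoplus_{j=1}^r Q/I_j$ has a $1$-linear resolution over $Q$: each $Q/I_j$ is a polynomial quotient of $Q$ by a regular sequence of linear forms, so its minimal free resolution is a (shifted) Koszul complex, hence linear. Likewise the rightmost term $K^{r-1}$ has a $1$-linear resolution. Then the leftmost term $Q/\sum_{i\neq j}\x_i\cap\x_j$ has a $2$-linear resolution: this can be cited from \cite{CN} (it is a Ferrers-type, or more precisely a "star" ideal, whose resolution is cellular and linear), or deduced directly from the long exact $\Tor$ sequence of \eqref{eq:2-linear_cor} together with the vanishing statements just mentioned, exactly as in the proof of \Cref{lem:Betti product ideal}.

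Next I would extract the formula for the linear strand. Abbreviating $\Tor_i^Q(M) := \Tor_i^Q(M,K)$, the long exact sequence in $\Tor$ associated to \eqref{eq:2-linear_cor}, read in degree $t+1$, gives
\begin{multline*}
\left[\Tor_{t+1}^Q\!\left(Q/\textstyle\sum_{i\neq j}\x_i\cap\x_j\right)\right]_{t+1} \to \left[\bigoplus_{j=1}^r \Tor_{t+1}^Q(Q/I_j)\right]_{t+1} \to \left[\bigoplus_{j=1}^{r-1}\Tor_{t+1}^Q(K)\right]_{t+1} \\
\to \left[\Tor_t^Q\!\left(Q/\textstyle\sum_{i\neq j}\x_i\cap\x_j\right)\right]_{t+1} \to \left[\bigoplus_{j=1}^r \Tor_t^Q(Q/I_j)\right]_{t+1}.
\end{multline*}
The leftmost module vanishes because the $2$-linear resolution of $Q/\sum_{i\neq j}\x_i\cap\x_j$ forces $\Tor_{t+1}^Q$ to be concentrated in degree $t+2$ (for $t+1\geq 1$), and the rightmost module vanishes because each $Q/I_j$ has a linear resolution, so $\Tor_t^Q(Q/I_j)$ is concentrated in degree $t$, not $t+1$. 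Hence the middle map is surjective with kernel the image of the first map, yielding the claimed cokernel description.

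Finally I would compute dimensions. Since $Q$ has $N = n_1+\cdots+n_r$ variables, $[\Tor_{t+1}^Q(K)]_{t+1}$ has dimension $\binom{N}{t+1}$, so the target $\bigoplus_{j=1}^{r-1}\Tor_{t+1}^Q(K)$ contributes $(r-1)\binom{N}{t+1}$ in degree $t+1$. For the source, $Q/I_j$ is resolved by the Koszul complex on the $N-n_j$ linear generators of $I_j$ (the variables not in $R_j$), so $[\Tor_{t+1}^Q(Q/I_j)]_{t+1} = \binom{N-n_j}{t+1}$; here I must check that the map $\bigoplus_j[\Tor_{t+1}^Q(Q/I_j)]_{t+1}\to\bigoplus_{j=1}^{r-1}[\Tor_{t+1}^Q(K)]_{t+1}$ is injective in this degree. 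This is the one genuinely nontrivial point, and it is the analogue of the left-exactness used implicitly in \Cref{lem:Betti product ideal}; I expect to argue it by observing that the first syzygies of $K$ over $Q$ are spanned by the Koszul relations among the $N$ variables, that those supported on variables of a single $R_j$ account exactly for the image of $\Tor_{t+1}^Q(Q/I_j)$, and that the subspaces so obtained meet only in what the connecting map kills—equivalently, one may compute $\beta_{t,t+1}^Q(Q/\sum_{i\neq j}\x_i\cap\x_j)$ directly from the known linear resolution in \cite{CN} and check it agrees. Granting injectivity, a dimension count in the four-term exact sequence gives
\[
\beta_{t,t+1}^Q\!\left(Q/\textstyle\sum_{i\neq j}\x_i\cap\x_j\right) = (r-1)\binom{N}{t+1} - \sum_{k=1}^r\binom{N-n_k}{t+1},
\]
as claimed. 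The main obstacle, as indicated, is verifying the injectivity of the comparison map in degree $t+1$ (equivalently, that no spurious linear syzygies are introduced); everything else is a routine transcription of the two-summand case.
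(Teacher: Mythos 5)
Your proposal follows the same route as the paper: the short exact sequence \eqref{eq:2-linear_cor}, the regularity bound $\reg\bigl(Q/\sum_{i\neq j}\x_i\cap\x_j\bigr)\le 1$ giving $2$-linearity, and the long exact $\Tor$ sequence in degree $t+1$ with the outer terms vanishing. The one point you flag as the ``main obstacle''---injectivity of $\bigl[\bigoplus_j\Tor_{t+1}^Q(Q/I_j)\bigr]_{t+1}\to\bigl[\bigoplus_{j=1}^{r-1}\Tor_{t+1}^Q(K)\bigr]_{t+1}$---is not an obstacle at all, and needs none of the syzygy analysis or the consistency check against \cite{CN} that you sketch: you have already shown $\bigl[\Tor_{t+1}^Q(Q/\sum_{i\neq j}\x_i\cap\x_j)\bigr]_{t+1}=0$, and exactness of the long exact sequence at the next spot says precisely that the kernel of the comparison map is the image of this zero module, so the map is injective and the five-term sequence collapses to a short exact sequence of vector spaces; the dimension count then gives $(r-1)\binom{N}{t+1}-\sum_k\binom{N-n_k}{t+1}$ exactly as in the paper. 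One further caveat: the citation of \cite{CN} for $2$-linearity covers Ferrers (bipartite) ideals, i.e.\ the case $r=2$; for general $r$ you should rely on your second option, the regularity bound coming from \eqref{eq:2-linear_cor}, which is also what the paper does.
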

\begin{proof}
Since $\reg(Q/I_j)=0$ for each $1\leq j\leq r$ and $\reg(K)=0$, 
    the short exact sequence \eqref{eq:2-linear_cor} implies, by means of the formula 
    \[
    \reg\left( Q/\sum_{i \neq j} \x_i \cap \x_j \right) \leq \max \left\{ \reg( \bigoplus_{j=1}^r  Q/I_j), \reg(K^{r-1})+1\right \}=1,
    \]
 that $\sum_{i \neq j} \x_i \cap \x_j $ has a 2-linear minimal free resolution. Moreover,  for every $t\geq 0$, it induces the following long exact sequence. Note that we write $\Tor^Q_t(M)$ instead of $\Tor^Q_t(M,K)$ for a $Q$-module $M$.
\begin{multline*} 
   \left[\Tor_{t+1}^Q \left( Q/\sum_{i \neq j} \x_i \cap \x_j \right) \right]_{t+1}
   \to \left[\bigoplus_{j=1}^r\Tor^Q_{t+1} (Q/I_j) \right]_{t+1} \\
   \to \left[\bigoplus_{j=1}^{r-1}\Tor^Q_{t+1} (K) \right]_{t+1}
   \to \left[\Tor_{t}^Q \left( Q/\sum_{i \neq j} \x_i \cap \x_j \right) \right]_{t+1} \\
   \to \left[\bigoplus_{j=1}^r\Tor^Q_{t} (Q/I_j) \right]_{t+1}.
\end{multline*}
Since  $\reg (\sum_{i \neq j} Q/\x_i \cap \x_j ) = 1$ and $\reg(Q/I_j)=0$, the left-most and right-most modules in the above long exact sequence are zero. Since the $Q$-modules $Q/I_j$ and $K$ are minimally resolved by Koszul complexes, the dimensions of the second and third terms of the sequence are given by sums of the appropriate binomial coefficients. Therefore, taking the difference of these dimensions yields the desired formula for the Betti numbers.
\end{proof}

For every $i=1,\dots ,r$ we consider a standard graded ring $A_i = R_i/\fa_i$ with $\fa_i\subseteq (\x_i)^2$. %\AS{we don't need $A_i$ to be Artinian Gorenstein or have the same regularity until we take connected sums.}
We abuse notation to write $\fa_i$ to also denote the extensions of these ideals to ideals of $Q$.

\begin{theorem}\label{thm:tors multifactor FP}
    For every $t\geq 1$, we have that 
\begin{equation*}
    \begin{split}
        &[\Tor_t^Q (A_1\times_K\cdots \times_KA_r, K)]_s =\\
        &\begin{cases}
        0 & \text{ if } s \le t, \\[2pt]
        \displaystyle\bigoplus_{i=1}^r[\Tor_t^Q (A_i , K)]_{t+1} \oplus
        \left[\Tor_t^Q \left( Q/\sum_{i\neq j} \x_i \cap \x_j, K \right) \right]_{t+1} & \text{ if } s = t+1, \\[2pt]
        \displaystyle\bigoplus_{i=1}^r[\Tor_t^Q (A_i , K)]_{s}   & \text{ if } s \ge t+2. 
        \end{cases}
    \end{split}
\end{equation*}
\end{theorem}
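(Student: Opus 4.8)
The plan is to imitate the proof of \Cref{thm:Betti fiber product}, replacing the two-factor short exact sequence \eqref{exactFP} by the multi-factor sequence \eqref{exactFPr} from \Cref{lem: multifactor FP presentation}, namely
\[
0 \to A_1\times_K\cdots \times_KA_r \to A_1\oplus\cdots \oplus A_r \to K^{r-1} \to 0.
\]
First I would pass to the long exact sequence in $\Tor^Q(-,K)$, which yields for each $t\ge 1$ and each degree $s$ the four-term exact sequence
\[
[\Tor_{t+1}^Q(K^{r-1})]_s \to [\Tor_t^Q(A_1\times_K\cdots\times_KA_r)]_s \to \bigoplus_{i=1}^r[\Tor_t^Q(A_i)]_s \to [\Tor_t^Q(K^{r-1})]_s.
\]
Since $K$ is resolved over $Q$ by a Koszul complex, $[\Tor_t^Q(K^{r-1})]_s$ is nonzero only when $s=t$ (for $0\le t\le N$); in particular it vanishes for $s\ge t+1$. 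Feeding this into the four-term sequence immediately gives $[\Tor_t^Q(A_1\times_K\cdots\times_KA_r)]_s \cong \bigoplus_{i=1}^r[\Tor_t^Q(A_i)]_s$ whenever $s\ge t+2$, which is the third case.

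For the vanishing in the range $s\le t$, I would invoke the presentation $A_1\times_K\cdots\times_KA_r\cong Q/J$ with $J = \fa_1+\cdots+\fa_r+\sum_{i\neq j}(\x_i\cap\x_j)$ from \Cref{lem: multifactor FP presentation}; since each $\fa_i\subseteq(\x_i)^2$ has initial degree at least two and the Ferrers-type ideal $\sum_{i\neq j}(\x_i\cap\x_j)$ has initial degree exactly two, the ideal $J$ has initial degree two, so $[\Tor_t^Q(Q/J)]_s=0$ for $s\le t$. This settles the first case.

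The remaining and only delicate case is $s=t+1$. Here I would extend the $\Tor$ sequence one step to the left and read it in degree $t+1$:
\[
\bigoplus_{i=1}^r[\Tor_{t+1}^Q(A_i)]_{t+1} \xrightarrow{\ \gamma\ } [\Tor_{t+1}^Q(K^{r-1})]_{t+1} \to [\Tor_t^Q(Q/J)]_{t+1} \to \bigoplus_{i=1}^r[\Tor_t^Q(A_i)]_{t+1} \to [\Tor_t^Q(K^{r-1})]_{t+1}.
\]
The right-most term vanishes because $K^{r-1}$ has a $0$-linear resolution over $Q$, so the sequence identifies $[\Tor_t^Q(Q/J)]_{t+1}$ as an extension of $\bigoplus_{i=1}^r[\Tor_t^Q(A_i)]_{t+1}$ by $\operatorname{coker}\gamma$. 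To compute $\operatorname{coker}\gamma$, note that $A_i\cong Q/(\fa_i+I_i)$ with $I_i=\x_1+\cdots+\widehat{\x_i}+\cdots+\x_r$, and since $\fa_i$ contains no linear forms, $[\Tor_{t+1}^Q(A_i)]_{t+1}\cong[\Tor_{t+1}^Q(Q/I_i)]_{t+1}=[\Tor_{t+1}^Q(R_i)]_{t+1}$; thus $\gamma$ may be replaced by the degree-$(t+1)$ part of $\bigoplus_i\Tor_{t+1}^Q(Q/I_i)\to\bigoplus_{j=1}^{r-1}\Tor_{t+1}^Q(K)$, whose cokernel is exactly $[\Tor_t^Q(Q/\sum_{i\neq j}\x_i\cap\x_j)]_{t+1}$ by \Cref{lem:Betti product ideal_many summands}. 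This completes the $s=t+1$ case and hence the theorem.

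\textbf{Main obstacle.} The crux is the $s=t+1$ analysis: one must be careful that the connecting map $\gamma$ really is the same (after the harmless identifications $A_i\leftrightarrow Q/I_i$ in top-linear degree) as the map appearing in \Cref{lem:Betti product ideal_many summands}, and that this identification is compatible with the maps in the two short exact sequences \eqref{exactFPr} and \eqref{eq:2-linear_cor}. This amounts to checking that the canonical projections $A_i\to K$ and $Q/I_i=R_i\to K$ induce compatible comparison maps of Koszul-type resolutions in homological degree $t+1$ and internal degree $t+1$; once the resolutions are matched up, everything is forced, but setting up that compatibility cleanly (rather than by a dimension count alone) is where the real work lies. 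As a byproduct, taking dimensions gives the Poincaré-series identity $P^Q_{A_1\times_K\cdots\times_KA_r}(t,s)=\sum_{i=1}^r\widetilde{P}^Q_{A_i}(t,s)+P^Q_{Q/\sum_{i\neq j}\x_i\cap\x_j}(t,s)$.
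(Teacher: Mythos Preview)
Your proposal is correct and follows essentially the same route as the paper: both argue via the long exact $\Tor$-sequence of \eqref{exactFPr}, dispatch $s\ge t+2$ from the linearity of the Koszul resolution of $K^{r-1}$, handle $s\le t$ via the initial degree of $J$, and treat $s=t+1$ by replacing $[\Tor_{t+1}^Q(A_i)]_{t+1}$ with $[\Tor_{t+1}^Q(Q/I_i)]_{t+1}$ and invoking \Cref{lem:Betti product ideal_many summands}. The compatibility you flag as the main obstacle is genuine but easily discharged (and the paper simply glosses over it): the surjections $R_i=Q/I_i\twoheadrightarrow A_i$ assemble into a morphism from the sequence \eqref{eq:2-linear_cor} to the sequence \eqref{exactFPr} over the identity on $K^{r-1}$, so the two maps into $[\Tor_{t+1}^Q(K^{r-1})]_{t+1}$ agree after the degree-$(t+1)$ identification, forcing the cokernels to coincide.
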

\begin{proof}
    Consider the short exact sequence of graded $Q$-modules \eqref{exactFPr}
    \[
0 \to A_1\times_K\cdots \times_KA_r \to A_1\oplus\cdots \oplus A_r \to K^{r-1} \to 0. 
    \]
    For every $t\geq 0$, it induces the following long exact sequence, where we write $\Tor^Q_t(M)$ instead of $\Tor^Q_t(M,K)$ for a $Q$-module $M$. 
\begin{multline*} 
   \Tor^Q_{t+1}(K^{r-1}) \to \Tor^Q_t(A_1\times_K\cdots \times_KA_r)\\
   \to \Tor^Q_{t}(A_1\oplus\cdots \oplus A_r)\to\Tor^Q_t(K^{r-1}).
\end{multline*} 
We have that $[\Tor^Q_{t}(K^{r-1})]_s\neq 0$ if and only if $0\le t=s\le n_1+\cdots +n_r$. Thus, for every $s\ge t+2$, we get 
\[
[\Tor_t^Q (A_1\times_K\cdots \times_KA_r)]_s\cong\bigoplus_{i=1}^r[\Tor_t^Q (A_i)]_{s}.
\]
Restricting to degree $s=t+1$, we get the exact sequence:
    \begin{multline*} 
   \bigoplus_{i=1}^{r}[\Tor^Q_{t+1}(A_i)]_{t+1}\to [\Tor^Q_{t+1}(K^{r-1})]_{t+1}\\
   \to [\Tor_t^Q(A_1\times_K\cdots \times_KA_r)]_{t+1} \to \bigoplus_{i=1}^r[\Tor^Q_t(A_i)]_{t+1}\to 0.
   %[\Tor^Q_t(K^{r-1})]_{t+1}.
\end{multline*}
    For every $1\le i\le r$, we have $A_i=Q/(I_i,\fa_i)$, and since we assume that each $\fa_i$ is generated in degrees at least two, we also have  
    \[
\bigoplus_{i=1}^r[\Tor^Q_{t+1}(A_i)]_{t+1}\cong \bigoplus_{i=1}^r[\Tor^Q_{t+1}(Q/I_i)]_{t+1}.
    \]
 This implies that 
  \begin{multline*}
[\Tor^Q_{t}(A_1\times_K\cdots \times_K A_r)]_{t+1}\cong \bigoplus_{i=1}^r[\Tor^Q_t(A_i)]_{t+1}\\
\oplus \coker \left(\bigoplus_{i=1}^r[\Tor^Q_{t+1}(Q/I_i)]_{t+1}\to \bigoplus_{i=1}^{r-1}[\Tor^Q_{t+1}(K)]_{t+1}\right).
  \end{multline*}
Using Lemma \ref{lem:Betti product ideal_many summands}, we get the desired formula.
    \end{proof}

From the previous result, we can compute the graded Betti numbers of the fiber product $A_1\times_K\cdots \times_K A_r$ in terms of the graded Betti numbers of the $A_i$ as $Q$-modules. A straightforward computation allows us to translate into a formula depending only on the Betti numbers of the $A_i$ as $R_i$-modules.

\begin{corollary}\label{cor:Poincare multifactor FP over R}
  With $N=n_1+\cdots+n_r$, we have
\begin{eqnarray*}
    P^Q_{A_1\times_K\cdots \times_KA_r}(t,s) &=&\sum_{i=1}^r(P^{R_i}_{A_i} (t,s)-1)(1+st)^{N-n_i}\\
    &&+(r-1)\frac{(1+st)^{N}-Nst -1}{t}\\
  && -\sum_{i=1}^r\frac{(1+st)^{N-n_i}-(N-n_i)st-1}{t}+1.
\end{eqnarray*}
\end{corollary}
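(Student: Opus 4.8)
The plan is to derive the Poincaré series formula of Corollary \ref{cor:Poincare multifactor FP over R} directly from Theorem \ref{thm:tors multifactor FP} by passing from the graded Betti numbers of $A_1 \times_K \cdots \times_K A_r$ as a $Q$-module to those of the individual factors $A_i$ as $R_i$-modules. The strategy mirrors the two-factor case treated in Proposition \ref{betaTbetaR} and its Corollary, so I would first record the multi-factor analogue of Proposition \ref{betaTbetaR}: since $Q = R_1 \otimes_K \cdots \otimes_K R_r$ is a free $R_i$-module and $A_i = Q/(I_i, \fa_i)$ with $\Tor^K$ vanishing between $R_i/\fa_i$ and the polynomial ring in the remaining variables, the minimal free resolution of $A_i$ over $Q$ is obtained by tensoring the minimal $R_i$-resolution with a Koszul complex on the $N - n_i$ remaining variables. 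This gives $P^Q_{A_i}(t,s) = P^{R_i}_{A_i}(t,s) \cdot (1+st)^{N-n_i}$, and hence $\widetilde{P}^Q_{A_i}(t,s) = (P^{R_i}_{A_i}(t,s) - 1)(1+st)^{N-n_i}$, exactly as in Proposition \ref{betaTbetaR}.

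Next I would translate Theorem \ref{thm:tors multifactor FP} into a Poincaré series identity. The three cases in that theorem say precisely that, for $i \geq 1$, the terms of $P^Q_{A_1 \times_K \cdots \times_K A_r}(t,s)$ are: the off-diagonal terms ($j > i$) of each $P^Q_{A_i}(t,s)$, plus the full Poincaré series of $Q/\sum_{i\neq j}\x_i\cap\x_j$ contributing the diagonal-plus-one strand $s^{t+1}t^t$ together with the degree-zero term. Using the notation $\widetilde{P}$ from before the statement of Theorem \ref{thm:Betti fiber product}, this reads
\[
P^Q_{A_1\times_K\cdots\times_KA_r}(t,s) = \sum_{i=1}^r \widetilde{P}^Q_{A_i}(t,s) + P^Q_{Q/\sum_{i\neq j}\x_i\cap\x_j}(t,s),
\]
which is the multi-factor generalization of the Poincaré series formula in Theorem \ref{thm:Betti fiber product}. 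Substituting the identity for $\widetilde{P}^Q_{A_i}$ from the previous paragraph handles the first sum. For the last term, I would invoke Lemma \ref{lem:Betti product ideal_many summands}: since $\sum_{i\neq j}\x_i\cap\x_j$ has a $2$-linear resolution, its Poincaré series is $1 + \sum_t \beta^Q_{t,t+1}(Q/\sum_{i\neq j}\x_i\cap\x_j) s^{t+1}t^t$ with $\beta^Q_{t,t+1} = (r-1)\binom{N}{t+1} - \sum_k \binom{N-n_k}{t+1}$; summing the binomial generating function (as in the last claim of Lemma \ref{lem:Betti product ideal}) gives
\[
P^Q_{Q/\sum_{i\neq j}\x_i\cap\x_j}(t,s) = (r-1)\frac{(1+st)^N - Nst - 1}{t} - \sum_{i=1}^r \frac{(1+st)^{N-n_i} - (N-n_i)st - 1}{t} + 1,
\]
where the linear correction terms $Nst$ and $(N-n_i)st$ account for the fact that the $\binom{\cdot}{1}$ (degree-one, i.e. $t^0 s^1$) coefficients are excluded from a $2$-linear resolution. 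Adding the two pieces yields the claimed formula.

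The main obstacle is bookkeeping rather than conceptual: I must be careful that the generating-function identity $\sum_{t\geq 1}\binom{N}{t+1}s^{t+1}t^t = \frac{(1+st)^N - Nst - 1}{t}$ correctly isolates the $t \geq 1$ (equivalently, the $\Tor$-degree $\geq 1$ strand with internal degree $\geq 2$) portion, and that the ``$+1$'' degree-zero terms from the three copies — the one in $P^Q_{Q/\sum\x_i\cap\x_j}$ and the ones implicit in subtracting $1$ from each $P^{R_i}_{A_i}$ — are reconciled so that the final expression has the single correct constant term $1$ (reflecting $\beta^Q_{0,0} = 1$). A secondary point requiring a word of justification is the multi-factor analogue of the Poincaré formula in Theorem \ref{thm:tors multifactor FP}: one should note that the long exact $\Tor$ sequence argument there, combined with the Gorenstein-free vanishing of $\Tor^Q_\bullet(K^{r-1})$ outside the diagonal, splits cleanly because the relevant modules sit in disjoint internal degrees, so taking dimensions is legitimate. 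Beyond these checks, the corollary follows by direct substitution.
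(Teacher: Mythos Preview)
Your proposal is correct and follows essentially the same route as the paper's own (very terse) proof, which simply cites Theorem \ref{thm:tors multifactor FP}, the numerical formula of Lemma \ref{lem:Betti product ideal_many summands}, and an analogue of Proposition \ref{betaTbetaR}; you have just unpacked these three ingredients explicitly. The only unnecessary step is your ``secondary point'' about the $\Tor$ sequence splitting, since that justification is already contained in the proof of Theorem \ref{thm:tors multifactor FP} and the Corollary can take those $\Tor$ isomorphisms as given.
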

\begin{proof}
 The generating series is derived from Theorem \ref{thm:tors multifactor FP}, the numerical formula in Lemma \ref{lem:Betti product ideal_many summands}, and an analogue of Proposition \ref{betaTbetaR}.
\end{proof}

Recall that it is harmless to assume that an AG $K$-algebra has socle degree at least three because AG algebras with smaller socle degrees are well understood (see the description above \Cref{thm:Betti conn sum}). 

\begin{theorem}
    \label{thm:Betti multifactor conn sum}
Assume that $A_1, \ldots, A_r$ are AG $K$-algebras with $\reg(A_\ell)=e\ge 3$ for all $1\leq \ell\leq r$. Then, for any integer  $i \ge 0$, the graded Betti numbers of the connected sum $A_1 \#_K \cdots \#_K A_r$ over the polynomial ring $Q$ with $\dim(Q)=N$ are given  by 
\begin{align*} 
   \label{eq:Betti conn sum}
&[\Tor_t^Q (A_1 \#_K \cdots \#_K A_r, K)]_s \cong \nonumber\\
 & \begin{cases}
0 & \text{ if } s \le t \text{ and } (t, s) \neq (0, 0), \\[2pt]
K & \text{ if } (t, s) = (0, 0), \\[2pt]
\bigoplus_{i=1}^r[\Tor_t^Q (A_i , K)]_{t+1}  \\ \oplus  [\Tor_t^Q( Q/\sum_{i\neq j} \x_i \cap \x_j, K)]_{t+1} &\text{ if } j = i+1, \\[2pt]
 \bigoplus_{i=1}^r[\Tor_t^Q (A_i , K)]_{s}  & \text{ if } t+2 \le s \le t + e -2,  \\[2pt]
 \bigoplus_{i=1}^r[\Tor_t^Q (A_i , K)]_{s}  & \\
 \oplus  [\Tor_{N-t}^Q(  Q/\sum_{i\neq j} \x_i \cap \x_j, K)]_{N-t+1}& \text{ if } s = t+ e - 1, \\[2pt] 
 K & \text{ if } (t, s) = (N, e+N), \\[2pt]
 0 & \text{ if } s \ge e + t \text{ and} \\
 & (t, s) \neq (N, e+N).
\end{cases}
\end{align*}
\end{theorem}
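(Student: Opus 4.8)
The plan is to lift the argument of Theorem~\ref{thm:Betti conn sum} to the multi-factor setting, replacing the two-factor exact sequence~\eqref{exactCS} by its multi-factor analogue~\eqref{eq:ses multifactor CS} and the two-factor fiber-product computation by Theorem~\ref{thm:tors multifactor FP}. Write $A := A_1\#_K\cdots\#_K A_r$. The structural input needed is the following: by Remark~\ref{rem:multi-factor CS as iterated CS} the algebra $A$ is an iterated two-factor connected sum, so by \cite[Lemma 3.8]{IMS} it is an AG $K$-algebra of socle degree $e$; thus $\reg A = e$, and $A$ has a presentation $Q/J$ of codimension $N = \dim Q$ with $J$ generated in degrees $\ge 2$. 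Consequently the graded minimal free resolution of $A$ over $Q$ is self-dual, that is $\beta^Q_{t,s}(A) = \beta^Q_{N-t,\,N+e-s}(A)$; and since each $A_i\cong Q/(\fa_i+I_i)$ is likewise AG of socle degree $e$ and codimension $N$ over $Q$, one also has $\beta^Q_{t,s}(A_i) = \beta^Q_{N-t,\,N+e-s}(A_i)$.

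Then I would feed the short exact sequence of graded $Q$-modules coming from~\eqref{eq:ses multifactor CS} with $T=K$, namely
\[
0 \to K(-e)^{r-1} \to A_1\times_K\cdots\times_K A_r \to A \to 0,
\]
into the long exact sequence of $\Tor^Q(-,K)$. Since $\Tor_t^Q(K^{r-1},K)$ is concentrated in internal degree $t$, the connecting maps force
\[
[\Tor_t^Q(A,K)]_s \;\cong\; [\Tor_t^Q(A_1\times_K\cdots\times_K A_r,K)]_s \qquad\text{whenever } s\notin\{e+t-1,\,e+t\}.
\]
Inserting Theorem~\ref{thm:tors multifactor FP} now yields the asserted values in the ranges $s\le t$ (zero except at $(t,s)=(0,0)$), $s=t+1$, and $t+2\le s\le t+e-2$, while $\reg A = e$ forces $[\Tor_t^Q(A,K)]_s=0$ for $s\ge e+t+1$. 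Only the two ``defect diagonals'' $s=e+t-1$ and $s=e+t$ remain.

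For these I would invoke the self-duality recorded above. The slot $(t,\,e+t)$ is dual to $(N-t,\,N-t)$, which lies in the already-settled range $s\le t$; hence $[\Tor_t^Q(A,K)]_{e+t}$ equals $K$ when $t=N$ and $0$ otherwise, giving the $(N,\,e+N)$ entry and the stated vanishing along $s=e+t$. The slot $(t,\,e+t-1)$ is dual to $(N-t,\,N-t+1)$, and because $e\ge 3$ we have $N-t+1\le (N-t)+e-2$, so for $1\le N-t$ this slot falls in the already-settled range $s=t'+1$ with $t'=N-t$; thus
\[
[\Tor_{N-t}^Q(A,K)]_{N-t+1} \cong \bigoplus_{i=1}^r [\Tor_{N-t}^Q(A_i,K)]_{N-t+1}\;\oplus\; \Big[\Tor_{N-t}^Q\big(Q/\sum_{i\neq j}\x_i\cap\x_j,\,K\big)\Big]_{N-t+1}.
\]
Applying the symmetry of $A_i$ to each $A_i$-summand turns $[\Tor_{N-t}^Q(A_i,K)]_{N-t+1}$ into $[\Tor_t^Q(A_i,K)]_{e+t-1}$, and re-dualizing the left-hand side via the symmetry of $A$ produces exactly the claimed expression for $[\Tor_t^Q(A,K)]_{e+t-1}$. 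The boundary cases $t=0$ and $t=N$ are checked directly against the stated formula.

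The hypothesis $e\ge 3$ enters precisely to guarantee that the two defect diagonals are not fixed by the duality $(t,s)\mapsto(N-t,\,N+e-s)$ and that their images land in the portion of the resolution already computed via Theorem~\ref{thm:tors multifactor FP}; this is the one point requiring care, and it is the same mechanism driving the two-summand Theorem~\ref{thm:Betti conn sum}. Everything else is bookkeeping with the long exact sequence, Theorem~\ref{thm:tors multifactor FP}, Lemma~\ref{lem:Betti product ideal_many summands}, and Gorenstein duality, so I expect the main (mild) obstacle to be organizing the duality near the middle of the resolution so that no Betti number is counted twice.
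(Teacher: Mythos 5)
Your proposal is correct and follows essentially the same route as the paper: the short exact sequence \eqref{eq:ses multifactor CS} with $T=K$, the long exact $\Tor$ sequence together with the concentration of $\Tor^Q_\bullet(K^{r-1},K)$ on the linear strand, Theorem \ref{thm:tors multifactor FP} for all degrees $s\le e+t-2$, the regularity bound $\reg A=e$ for $s\ge e+t+1$, and Gorenstein self-duality of $A$ and of each $A_i$ (with $e\ge 3$ ensuring the two defect diagonals are not self-dual) to fill in $s\in\{e+t-1,e+t\}$. Your explicit tracking of where the dual slots $(N-t,N-t)$ and $(N-t,N-t+1)$ land is just a more detailed write-up of the same duality step the paper uses.
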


\begin{proof}
As before, we write $\Tor_t^Q(M)$  instead of $\Tor_t^Q (M, K)$ for any $Q$-module $M$. 
We denote $C=A_1  \#_K \cdots \#_K A_r$ and $D= A_1 \times_K \cdots \times_K A_r$,
and consider the exact sequence  of graded $Q$-modules \eqref{eq:ses multifactor CS}
\[
0  \to K^{r-1} (- e) \to D  \to C  \to 0. 
\]
Its long exact $\Tor$ sequence gives exact sequences 
\[
[\Tor_t^Q (K^{r-1})]_{s - e} \to [\Tor_t^Q (D)]_s \to [\Tor_t^Q (C)]_s \to [\Tor_{t-1}^Q (K^{r-1})]_{s - e}. 
\]
Since $\Tor_t^Q (K)$ is concentrated in degree $t$ we conclude that 
\[
[\Tor_t^Q (C)]_s  \cong [\Tor_t^Q (D)]_s  
\] 
if $s \notin \{e + t - 1, e + t\}$. Combined with \Cref{thm:tors multifactor FP},%\Cref{thm:Betti fiber product},
this determines $[\Tor_t^Q (C)]_s$ if $s \le e + t-2$. 

Using that  $\reg (C) = \reg A_i =e$, we know  $[\Tor_t^Q (C)]_s = 0$ if $s \ge e + t + 1$. It remains to determine $[\Tor_t^Q (C)]_s$ if $s \in \{e + t - 1,e + i\}$. To this end, we utilize the fact that $C$ is Gorenstein. Thus, its graded minimal free resolution is symmetric. In particular, one has
\[
[\Tor_t^Q (C)]_s  \cong [\Tor_{N-t}^Q (C)]_{e + N - s}
\]
and similarly, we have
$\Tor_t^Q (A_i)_s  \cong [\Tor_{N-t}^Q (A)]_{e + N - s}$
for each $A_i$.

Combined with \Cref{thm:tors multifactor FP} and using $e \ge 3$, which implies that the degrees $e+i-1, e+i$ are not self-dual under the isomorphisms given above, the claim regarding the $\Tor$ modules follows. 
\end{proof}

\begin{corollary}
    With the notation of \Cref{thm:Betti multifactor conn sum}, we have
  \begin{equation*}
  \begin{split}
  P^Q_{A_1\#_K\cdots \#_K A_r}(t,s) &=\sum_{i=1}^r(P^{R_i}_{A_i} (t,s)-1)(1+st)^{N-n_i} +1\\
    &+(r-1)\frac{(1+st)^{N}-Nst -1}{t}\\
  & -\sum_{i=1}^r\frac{(1+st)^{N-n_i}-(N-n_i)st-1}{t} \\
  &+(r-1)s^{N+e}t^{N+1}[(1+s^{-1}t^{-1})^{N}-\frac{N}{st} -1]\\
  & -s^{N+e}t^{N+1}\sum_{i=1}^r \left[(1+s^{-1}t^{-1})^{N-n_i}-\frac{N-n_i}{st}-1 \right].
  \end{split}
  \end{equation*}
\end{corollary}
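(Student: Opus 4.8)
The plan is to obtain $P^Q_{A_1\#_K\cdots\#_K A_r}(t,s)$ by reading the graded Betti numbers of $C=A_1\#_K\cdots\#_K A_r$ off \Cref{thm:Betti multifactor conn sum}, assembling them into a generating function, and rewriting each resulting piece by means of the closed formulas already available for $Q$ modulo $\Sigma:=\sum_{i\ne j}\x_i\cap\x_j$ and for the algebras $A_i$.

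First I would take $\dim_K$ of each graded vector space listed in \Cref{thm:Betti multifactor conn sum} and sum over all bidegrees $(t,s)$. The terms fall into four groups: (a) the summands $[\Tor_t^Q(A_i,K)]_s$, which occur in every row with $t+1\le s\le t+e-1$; (b) the summand $[\Tor_t^Q(Q/\Sigma,K)]_{t+1}$ from the row $s=t+1$; (c) the summand $[\Tor_{N-t}^Q(Q/\Sigma,K)]_{N-t+1}$ from the row $s=t+e-1$; and (d) the two one-dimensional corner contributions at $(0,0)$ and $(N,N+e)$. Group (b) sums to $P^Q_{Q/\Sigma}(t,s)-1$. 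Reindexing group (c) by $u=N-t$, and using that \Cref{2-linear_cor} forces $\operatorname{pd}_Q(Q/\Sigma)\le N-1$ (hence $\beta^Q_{N,N+1}(Q/\Sigma)=0$), group (c) becomes $s^{N+e}t^N\bigl(P^Q_{Q/\Sigma}(t^{-1},s^{-1})-1\bigr)$. For group (a), observe that the only terms of $\sum_i\widetilde P^Q_{A_i}(t,s)$ it does not cover lie in the strand $s=t+e$; since each $A_i$ is Gorenstein as a quotient of $Q$, self-duality of its minimal free resolution identifies $\beta^Q_{t,t+e}(A_i)$ with the linear Betti number $\beta^Q_{N-t,N-t}(A_i)=\binom{N-n_i}{N-t}$, so that strand sums to $t^{n_i}s^{n_i+e}(1+st)^{N-n_i}$. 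Thus group (a) contributes $\sum_i\widetilde P^Q_{A_i}(t,s)$ minus the correction $\sum_i t^{n_i}s^{n_i+e}(1+st)^{N-n_i}$; the $r$ leading copies of $t^Ns^{N+e}$ in the correction cancel the $r$ copies coming from the tops of the $\widetilde P^Q_{A_i}$, so that the corner term at $(N,N+e)$ ultimately contributes a single $1$.

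It then remains to convert from $\Tor^Q$ to $\Tor^{R_i}$ and substitute. Applying \Cref{betaTbetaR} with $R_i$ in the role of $R$ and $\bigotimes_{k\ne i}R_k$ in the role of $S$ gives $P^Q_{A_i}(t,s)=P^{R_i}_{A_i}(t,s)(1+st)^{N-n_i}$, hence $\widetilde P^Q_{A_i}(t,s)=\bigl(P^{R_i}_{A_i}(t,s)-1\bigr)(1+st)^{N-n_i}$; and since $t^{n_i}s^{n_i+e}$ is the top term of the Gorenstein resolution of $A_i$ over $R_i$, the correction is exactly $\sum_i t^{n_i}s^{n_i+e}(1+st)^{N-n_i}$. \Cref{lem:Betti product ideal_many summands} furnishes $P^Q_{Q/\Sigma}(t,s)=1+(r-1)\tfrac{(1+st)^N-Nst-1}{t}-\sum_k\tfrac{(1+st)^{N-n_k}-(N-n_k)st-1}{t}$, and inserting this and its reflection under $t\mapsto t^{-1},\,s\mapsto s^{-1}$ into groups (b) and (c) produces the remaining terms; note that groups (a) and (b) together with the corner $1$ and the leading-term cancellation reproduce $P^Q_{A_1\times_K\cdots\times_K A_r}(t,s)$ of \Cref{cor:Poincare multifactor FP over R}. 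Collecting the four groups gives the claimed identity; the hypothesis $e\ge3$ is used exactly to keep the rows of \Cref{thm:Betti multifactor conn sum} from overlapping, so that no further cancellation occurs.

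I expect the main obstacle to be the bookkeeping near the top of the resolution in the second paragraph: one must determine precisely how the extra summand $[\Tor^Q_{N-t}(Q/\Sigma,K)]_{N-t+1}$ in the $s=t+e-1$ row of \Cref{thm:Betti multifactor conn sum} coexists with the disappearance of the $s=t+e$ strand of $\bigoplus_i\Tor^Q_t(A_i,K)$ — a strand present in the fiber product $A_1\times_K\cdots\times_K A_r$ but annihilated in the connected sum except for the single socle generator — and check that this vanishing part assembles exactly into $\sum_i t^{n_i}s^{n_i+e}(1+st)^{N-n_i}$. This is where the Gorenstein self-duality of the $A_i$ as quotients of $Q$ and the evaluation of their linear-strand Betti numbers as the binomial coefficients $\binom{N-n_i}{N-t}$ enter essentially; the remainder is a routine manipulation of generating functions.
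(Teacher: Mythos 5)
Your route is the same as the paper's: take dimensions in \Cref{thm:Betti multifactor conn sum}, reindex the dual strand in the row $s=t+e-1$, and substitute the closed forms from \Cref{lem:Betti product ideal_many summands} and the analogue of \Cref{betaTbetaR}. The gap is in the final assembly. Adding up your own four groups gives
$\sum_{i=1}^r\bigl(P^{R_i}_{A_i}(t,s)-1\bigr)(1+st)^{N-n_i}+P^Q_{Q/\sum_{i\neq j}\x_i\cap\x_j}(t,s)+s^{N+e}t^{N}P^Q_{Q/\sum_{i\neq j}\x_i\cap\x_j}(t^{-1},s^{-1})-\sum_{i=1}^r t^{n_i}s^{n_i+e}(1+st)^{N-n_i}$,
whereas the displayed right-hand side of the corollary equals the same three leading expressions with the last subtraction replaced by $-s^{N+e}t^{N}$. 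These differ by $\sum_{i=1}^r t^{n_i}s^{n_i+e}(1+st)^{N-n_i}-t^{N}s^{N+e}$, which is nonzero whenever $r\ge 2$: the asserted ``leading-term cancellation'' removes only the $r$ copies of $t^Ns^{N+e}$, not the rest of the correction term, so your parenthetical claim that groups (a), (b) and the corner $1$ reproduce $P^Q_{A_1\times_K\cdots\times_KA_r}(t,s)$ of \Cref{cor:Poincare multifactor FP over R} is false (they give that series minus the entire $s-t=e$ strand), and the closing sentence ``collecting the four groups gives the claimed identity'' does not hold.

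The ironic part is that the careful bookkeeping is the sound portion of your argument: the correction $\sum_i t^{n_i}s^{n_i+e}(1+st)^{N-n_i}$ really must be subtracted, because \Cref{thm:Betti multifactor conn sum} kills the whole top strand of $\bigoplus_i\Tor^Q(A_i,K)$ except the single socle corner. You can test this on the paper's own \Cref{ex2} ($r=2$, $e=8$, $N=5$): the total Betti numbers of $A\#_KB$ in \Cref{tab: betti2} are $(1,12,29,29,12,1)$, your collected sum reproduces them exactly, while the printed identity yields $(1,12,30,33,17,2)$. So, carried out honestly, your computation proves a corrected version of the statement rather than the statement as displayed; the paper's own proof follows the same outline but starts from $\sum_iP^Q_{A_i}(t,s)+P^Q_{Q/\sum_{i\neq j}\x_i\cap\x_j}(t,s)+s^{N+e}t^NP^Q_{Q/\sum_{i\neq j}\x_i\cap\x_j}(t^{-1},s^{-1})-2$, i.e.\ it never removes the $s-t=e$ strands of the $A_i$, which is exactly where your accounting and the printed formula part ways. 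As written, though, your proposal asserts a cancellation that does not occur, so it does not establish the stated identity; you should either exhibit the discrepancy explicitly and state the corrected formula, or explain why the top-strand correction would vanish (it does not).
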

\begin{proof}
As a first step, we show
\begin{equation*}
\begin{split}
P^Q_{A_1\#_K\cdots \#_K A_r}(t,s)
&=\sum_{i=1}^rP^Q_{A_i}(t,s)+P^Q_{ Q/\sum_{i\neq j} \x_i \cap \x_j}(t,s)\\
&+s^{N+e}t^{N}P^Q_{ Q/\sum_{i\neq j} \x_i \cap \x_j}(t^{-1},s^{-1})-2.
\end{split}
\end{equation*}
    This formula follows from Theorem \ref{thm:Betti multifactor conn sum} and the identities
\begin{eqnarray*}
&&\sum_{u=0}^{N} \beta_{N-u, N-u+1} \left( Q/\sum_{i\neq j}\x_i \cap \x_j \right)t^{u}s^{u+e-1} \\
&=& \sum_{v=0}^{N} \beta_{v, v+1} \left( Q/\sum_{i\neq j}\x_i \cap \x_j \right)  t^{N-v}s^{N-v+e-1}\\
&=&t^{N}s^{N+e}\sum_{v=0}^{N} \beta_{v, v+1} \left( Q/\sum_{i\neq j}\x_i \cap \x_j \right) t^{-v}s^{-v-1}\\
&=&t^{N}s^{N+e} P^Q_{ Q/\sum_{i\neq j}\x_i \cap \x_j}(t^{-1},s^{-1}).
\end{eqnarray*}
Substituting the formulas of \Cref{cor:Poincare multifactor FP over R} and \Cref{lem:Betti product ideal_many summands} into the formula above yields the claim.
\end{proof}

%\textbf{Ingredients for arbitrary many summands} Note that this sequence implies that $\reg (\sum_{i \neq j} (\x_i \cap \x_j) ) = 2$. Thus, $\sum_{i \neq j} (\x_i \cap \x_j) $ has a 2-linear minimal free resolution, and we can use the above sequence to compute its graded Betti numbers and to obtain the desired analog of \Cref{lem:Betti product ideal}. 

%%%%%%%%%%%%%%%%%%%%%%%

\section{Connected Sum as a Doubling} 
\label{sec:doubling} 
\subsection{Motivating examples} We discuss examples of monomial complete intersections. Using the so-called doubling method, Celikbas, Laxmi and Weyman  solved a particular case of Questions \ref{Q1} and \ref{Q2}. Indeed, in \cite[Corollary 6.3]{CLW}, they determine a minimal free resolution of the connected sum of $K$-algebras $A_i=K[x_i]/(x_i^{d_i})$ by using the doubling construction. The goal of this section is to generalize their result to AG $K$-algebras with the same socle degree. We start with a toy example.

\begin{example}\label{ex3} The Betti table of the connected sum
$$
C=\frac{K[x]}{(x^4)}\# _K\frac{K[y]}{(y^4)}\# _K\frac{K[z]}{(z^4)}
$$
is described on the left in \Cref{tab: betti 3}.
\begin{table}[ht]
    \centering 
    $$
\begin{tabular}{c|cccccc}
        & 0 & 1 & 2&3\\
       \hline
       total & 1 & 3 & 3& 1\\
       \hline
       0: & 1 & .  & .& . \\
       1: & . & 3 & 2 &. \\
       2:& . &  2&3&.\\
              3: &.&.&.&1
    \end{tabular}
\qquad
     \begin{tabular}{c|cccccc}
        & 0 & 1 & 2&3\\
       \hline
       total & 1 & 3 & 3& 1\\
       \hline
       0: & 1 & .  & .& . \\
       1: & . & 3 & 2 &. \\
\end{tabular}
$$  
\caption{Betti tables of $R/I$ and $R/J$ in \Cref{ex3}}
    \label{tab: betti 3}
\end{table}
It should be understood as follows. The connected sum $C$ has the presentation
\[
C=\frac{K[x,y,z]}{(xy, xz,yz, x^3+y^3, x^3+z^3)}.
\]
Let $Q = K[x, y, z], I=(xy, xz,yz, x^3+y^3, x^3+z^3)$ and $J = (xy, xz, yz)$. Then the Betti table of $Q/J$ is given on the right in \Cref{tab: betti 3}. It follows from this that $ \omega_{Q/J}$ has two generators and
%\begin{table}[ht]
%    \centering
%$$    \begin{tabular}{c|cccccc}
%        & 0 & 1 & 2&3\\
%       \hline
%       total & 1 & 3 & 3& 1\\
%       \hline
%       0: & 1 & .  & .& . \\
%       1: & . & 3 & 2 &. \\
%    \end{tabular}
%$$    \caption{Betti table}
%    \label{tab: betti4}
%\end{table}
there is an exact sequence 
$$ 0\to \omega_{Q/J}(-3)\to Q/J\to C\to 0,$$ 
which maps the generators of $ \omega_{Q/J}$ to the elements $x^3+y^3$ and $x^3+z^3$ in $Q/J$.
The resolution of $C$ is obtained as a mapping cone from the previous exact sequence. 

Each of the summands in $C$ is obtained by doubling a polynomial ring. Indeed, the short exact sequence
\[
0 \to \omega_{K[x]}(-3)\to K[x]\to \frac{K[x]}{(x^4)} \to 0
\]
sending the generator of $\omega_{K[x]}\cong K[x] (-1)$ to $x^4$, shows that $K[x]/(x^4)$ is a doubling of $K[x]$. Similarly, the remaining summands are doublings of $K[y]$ and $K[z]$, respectively. Furthermore, the ring $Q/J$ from above can be identified with the fiber product of the rings being doubled
\[
Q/J=K[x]\times_K K[y]\times_K K[z].
\]
\end{example}

The following example is the first generalization of the \cite[Corollary 6.3]{CLW} to every monomial complete intersections.
\begin{example}  
We focus on the connected sum $A=A_1 \# _K \cdots \# _KA_r$ of complete intersection algebras
\[
A_i:= K[x_{i,1},\dots,x_{i,n_i}] /(x_{i,1}^{d_{i,1}},\dots,x_{i,n_i}^{d_{i,n_i}})
\]
for $i=1,\dots,r$, satisfying 

\begin{equation}\label{eq: c}
 \sum_{j=1}^{n_i} d_{i,j}-n_i=\sum_{j=1}^{n_{i'}} d_{i',j}-n_{i'} \; \text{ whenever }1\leq i,i'\leq r.
 \end{equation}

Let $R_i=K[x_{i,1},\dots,x_{i,n_i}]$, $Q= R_1\otimes_K \cdots \otimes_K R_r$ and let $c$ be the quantity defined in \eqref{eq: c}. 
The connected sum of the $K$-algebras $A_i$ admits the presentation $A\cong Q /I$ where 
\begin{align*}
I &= \left(x_{i,j_i}x_{h,j_h} \,|\, 1\leq i<h\leq r,  1\leq j_i\leq n_i, 1\leq j_h\leq n_h\right)\\
&+ \left(x_{i,l_i}^{d_{i,l_i}} \,\middle|\, 1\leq i\leq r,  1\leq l_i\leq n_i-1\right)\\
&+ \left(x_{i,1}^{d_{i,1}-1}\cdots x_{i,n_i}^{d_{i,n_i}-1}+x_{1,1}^{d_{1,1}-1}\cdots x_{1,n_1}^{d_{1,n_1}-1} \,\middle|\, 2\leq i\leq r\right).
\end{align*}
It can be verified that $A$ is a doubling of $\tilde{A}=Q/J$, where $J$ is an ideal defining $r$ coordinate points in $\mathbb{A}_K^{n_1}\times \cdots \times \mathbb{A}_K^{n_r}$ with multiplicity; more precisely, $J=\bigcap_{i=1,\ldots,r}J_i$, where \begin{equation*}J_i=\left(x_{j,h},x_{i,l_i}^{d_{i,l_i}} \,\middle|\, j\neq i, 1\leq h\leq n_j,1\leq l_i\leq n_i-1\right).\end{equation*}
More importantly, setting $\tilde{A_i}=R_i/(x_{i,l_i}^{d_{i,l_i}} \,|\, 1\leq l_i\leq n_i-1)$, we see that each ring $A_i$ is a doubling of $\tilde{A_i}$ via the sequence

\[
0 \to \omega_{\tilde{A_i}}(-c)\to  \tilde{A_i}\to A_i\to 0
\]
sending the generator of $\omega_{\tilde{A_i}}(-c) \cong \tilde{A_i} (-d_{i,n_i})$ to $x_{i,n_i}^{d_{i,n_i}}$, and that $\tilde{A}=\tilde{A_1}\times_K\cdots \times_K\tilde{A_r}$. The Betti numbers of $\tilde{A}$ can thus be obtained via \Cref{cor:Poincare multifactor FP over R}.

%As $\left(\bigcap_{j\neq i}J_j\right) + J_i$ is the maximal ideal, then we can argue by induction to compute a free minimal resolution of $J$. In fact, the free minimal resolutions of $J_i$ is well known and a mapping cone argument can be applied to the exact sequence $$0\rightarrow \left(\bigcap_{j\neq i}J_j\right) \cap J_i \rightarrow \left(\bigcap_{j\neq i}J_j \right)\oplus J_i \rightarrow \left(\bigcap_{j\neq i}J_j \right)+ J_i \rightarrow 0.$$
\end{example}
We shall explain this observation as part of a general phenomenon in the following result.

\begin{theorem}\label{thm:doubling}
    Let $A_1,\ldots, A_r$ be graded AG $K$-algebras with $\reg(A_i)=d$ for all $1\leq i,j\leq r$. Suppose that for each $1\le i\le r$, $A_i$ is a doubling of some 1-dimensional Cohen-Macaulay algebra  $\tA_i$, then
the connected sum $A_1\#_K \cdots \#_K A_r$ is a doubling of $\tA_1\times_K \cdots \times_K \tA_r$.
\end{theorem}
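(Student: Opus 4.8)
The plan is to reduce the multi-factor statement to the two-factor case and then prove the two-factor case directly using the characterization of doublings via canonical modules together with the exact sequences developed in \S\ref{s: background}. I will first establish the following:

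\textbf{Two-factor case.} Suppose $A$ and $B$ are graded AG $K$-algebras with $\reg(A)=\reg(B)=d$, where $A$ is a doubling of a $1$-dimensional Cohen--Macaulay algebra $\tA$ and $B$ a doubling of $\tB$. I claim $A\#_K B$ is a doubling of $\tA\times_K\tB$. First I would record that $\tA\times_K\tB$ is $1$-dimensional Cohen--Macaulay (e.g.\ by \cite[Lemma 1.5]{AAM}, or directly from the defining exact sequence \eqref{exactFP}). Next, from \Cref{lem:facts}(b) the doubling hypotheses yield exact sequences
\[
0\to \omega_{\tA}(-d)\to \tA\to A\to 0,\qquad 0\to \omega_{\tB}(-d)\to \tB\to B\to 0,
\]
and comparing degree-zero parts gives $[\omega_{\tA}]_{-d}=[\omega_{\tB}]_{-d}=0$. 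Assembling these with \eqref{exactFP} for both the tilde and non-tilde rings produces a commutative $3\times 3$ diagram of graded $Q$-modules ($Q=R\otimes_K S$) whose rows are the fiber-product sequences and whose middle column is $\omega_{\tA}\oplus\omega_{\tB}$ mapping into $\tA\oplus\tB$; the leftmost vertical map $\tA\times_K\tB\to A\times_K B$ is the one induced by the universal property of the pullback, and the Snake Lemma identifies its kernel with $(\omega_{\tA}\oplus\omega_{\tB})(-d)$.

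\textbf{Dualizing.} Applying $\Hom_Q(-,Q)$ (equivalently, tracking canonical modules, using that $\omega_{(-)}$ behaves well on the maximal Cohen--Macaulay modules here) to this diagram yields a second $3\times 3$ diagram relating $\omega_{\tA}\oplus\omega_{\tB}$, $\omega_{\tA\times_K\tB}$, and $K$ in one row, and $\omega_A\oplus\omega_B\cong A\oplus B$ (by Gorensteinness), $\omega_{A\times_K B}$, and $K$ in another. Using \Cref{rem: Thom class}, the vertical map $K\to \omega_A\oplus\omega_B$ sends $1$ to $(\tau_A,\tau_B)$ after the identification. Chasing this diagram: the image of $\omega_{\tA\times_K\tB}(-d)\to \tA\oplus\tB$ vanishes in degree zero (by the vanishing $[\omega_{\tA}]_{-d}=0$ and the corresponding fact for $\omega_{\tA\times_K\tB}$, which follows from \eqref{eq:compare can mod}-type sequence), hence lands inside $\ker\mu=\tA\times_K\tB$; this produces an injection
\[
\delta\colon \omega_{\tA\times_K\tB}(-d)\hookrightarrow \tA\times_K\tB.
\]
A final Snake Lemma argument comparing $\delta$ with the map $K(-d)\to A\times_K B$ sending $1\mapsto(\tau_A,\tau_B)$ shows $\coker\delta\cong (A\times_K B)/\langle(\tau_A,\tau_B)\rangle = A\#_K B$, exhibiting the desired doubling sequence
\[
0\to \omega_{\tA\times_K\tB}(-d)\xrightarrow{\delta} \tA\times_K\tB\to A\#_K B\to 0.
\]
One should also check the $G_0$ hypothesis in \Cref{doubling} for $\tA\times_K\tB$; since it is concentrated at the maximal ideal in the Artinian-after-quotient sense, the minimal primes are those of the $R_i$ summands, where Gorensteinness is inherited from the $\tA_i$.

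\textbf{Multi-factor case and main obstacle.} With the two-factor result in hand, I would induct on $r$ using Remark~\ref{rem:multi-factor CS as iterated CS} and Remark~\ref{rem:multi-factor FP as iterated FP}: writing $A_1\#_K\cdots\#_K A_r=(A_1\#_K\cdots\#_K A_{r-1})\#_K A_r$, the inductive hypothesis gives that $A_1\#_K\cdots\#_K A_{r-1}$ is a doubling of $\tA_1\times_K\cdots\times_K\tA_{r-1}$, and applying the two-factor case to this algebra and $A_r$ (both of regularity $d$, both doublings) yields that the $r$-fold connected sum is a doubling of $(\tA_1\times_K\cdots\times_K\tA_{r-1})\times_K\tA_r=\tA_1\times_K\cdots\times_K\tA_r$. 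The one point requiring care is that the two-factor theorem must apply with $A=A_1\#_K\cdots\#_K A_{r-1}$ not a polynomial quotient in disjoint variables but an honest connected sum; this is fine because the proof only used that $A$ is AG of regularity $d$ and a doubling of a $1$-dimensional CM ring, all of which hold by induction. I expect the main obstacle to be the careful bookkeeping in the dualized diagram---in particular verifying that the connecting maps are the identity on $K$ and that the element $\delta$ produces precisely the Thom-class ideal $\langle(\tau_A,\tau_B)\rangle$ rather than some other socle element---so that the cokernel is genuinely the connected sum for the given orientations, rather than merely an abstract doubling of $\tA\times_K\tB$.
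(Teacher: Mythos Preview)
Your proposal is correct and follows essentially the same route as the paper's proof: induction reducing to the two-factor case, the $3\times 3$ diagram built from the doubling sequences and the fiber-product sequence \eqref{exactFP}, dualization via $\Hom_Q(-,Q)$, the degree-zero vanishing argument to factor $\eta[-d]$ through $\tA\times_K\tB$, and a final Snake Lemma identifying $\coker\delta$ with $A\#_K B$ using the Thom-class description from \Cref{rem: Thom class}. You even anticipate the paper's delicate point---tracking that $\delta$ produces exactly $\langle(\tau_A,\tau_B)\rangle$---and your extra remark about verifying the $G_0$ hypothesis is a point the paper leaves implicit.
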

\begin{proof}
    We proceed by induction on $r$. We first prove the base case where $r=2$. 
    Set $\tA_1 = R/\tilde{\fa}_1$ and $\tA_2 = S/ \tilde{\fa}_2$ and let $Q=R_1\otimes_K R_2$. By \cite[Lemma 1.5]{AAM} the ring $\tA_1 \times_K \tA_2$ is Cohen Macaulay of dimension one.
By \Cref{lem:facts}, our assumptions imply that for each $i$ we have exact sequences
\begin{equation}\label{doubles}
0 \to \omega_{{\tA}_i} (-d) \to \tA_i \to A_i \to 0.
\end{equation}
Considering these in degree zero we conclude that 
\begin{equation}
    \label{eq:can mod in low degree_multi factor}
[\omega_{{\tA}_i}]_{-d} = 0. 
\end{equation}

Combining the exact sequences \eqref{doubles} for $i\in\{1,2\}$ with the sequence in \eqref{exactFP}, we obtain the following commutative diagram of $Q$-modules with exact rows and middle column.
\begin{align}
     \label{commutative diagram_multi factor}
\minCDarrowwidth20pt
\begin{CD}
@. 0 @. 0 @.  \\
@.  @VVV  @VVV    \\
@.(\omega_{\tA_1} \oplus \omega_{\tA_2}) (- d) @>>{=}> (\omega_{\tA_1} \oplus \omega_{\tA_2}) (- d) @. \\
@. @VVV  @VVV    \\
0 @>>>  \tA_1 \times_K \tA_2  @>{\sigma}>>  \tA_1 \oplus \tA_2  @>{\mu}>>   K @>>>  0 \\
 @. @VVV   @VVV    @VV{=}V \\
0 @>>>  A_1 \times_K A_2  @>>>  A_1 \oplus A_2  @>>>   K @>>>  0 \\
@. @VVV  @VVV    \\
@. 0 @. 0 @.  \\
\end{CD}
\end{align} 
The vertical map $\tA_1 \times_K \tA_2 \to  A_1 \times_K A_2$ in \eqref{commutative diagram_multi factor} is uniquely determined by viewing $A_1\times_KA_2$ as a pullback in the category of $K$-algebras and utilizing the universal property of this categorical construction. Moreover, by the snake lemma, the kernel of this map is  the module $(\omega_{\tA_1} \oplus \omega_{\tA_2}) (- d) $.

Applying the functor $\Hom(-, Q)$ to  the diagram \eqref{commutative diagram_multi factor} yields a new commutative diagram \eqref{commutative diagram2_multi factor}. The middle row in \eqref{commutative diagram2_multi factor} comes from the top  of \eqref{commutative diagram_multi factor}, and the top row of \eqref{commutative diagram2_multi factor} contains the non-vanishing $\Ext$ modules for  the $Q$-modules in the middle row of \eqref{commutative diagram_multi factor}.  According to \Cref{rem: Thom class}, the map marked $\nu$ satisfies $\nu(1)=(\tau_{A_1},\tau_{A_2})$ after identifying $\omega_{A_1} \oplus \omega_{A_2}\cong A_1\oplus A_2$.

\begin{comment}
\begin{align}
     \label{commutative diagram2_multi factor}
\minCDarrowwidth20pt
\begin{CD}
@.  @. @. 0 @.  \\
@.  @. @.  @VVV    \\
@. 0 @.  @. K @. \\
@. @VVV  @.  @VV\nu V    \\
0 @>>> \omega_{\tA_1} \oplus \omega_{\tA_2} @>>> (\tA_1\oplus \tA_2)(\reg A_1) @>>\chi > \omega_{A_1} \oplus \omega_{A_2} @>>> 0\\
@. @VVV   @VV{=}V  @VVV    \\
0 @>>>  \omega_{\tA_1 \times_K \tA_2}  @>>{\eta}>  (\tA_1 \oplus \tA_2)(\reg A) @>>>  \omega_{A_1\times_K A_2} @>>>  0 \\
 @. @VV{\xi}V   @.    @VVV \\
@. K  @. @. 0 \\
@.   @VVV    \\
@.  0 @.  \\
\end{CD}
\end{align} 
\end{comment}

\begin{align}
     \label{commutative diagram2_multi factor}
\minCDarrowwidth20pt
\begin{CD}
@.  @. 0 @. 0 \\
@.  @. @VVV  @VVV    \\
0 @<<< K @<<< \omega_{\tA_1 \times_K \tA_2}  @<<<  \omega_{\tA_1} \oplus \omega_{\tA_2} @<<< 0 \\
@. @.  @VV\eta V  @VVV    \\
 @.  @. (\tA_1\oplus \tA_2)(d) @<<{=}< (\tA_1\oplus \tA_2)(d) @. \\
@. @.  @VVV  @VV\chi V    \\
 @.0 @<<< \omega_{A_1 \times_K A_2}  @<<< \omega_{A_1} \oplus \omega_{A_2} @<<< K @<<<  0 \\
 @.   @.  @VVV  @VVV \\
@.   @.0 @. 0 
\end{CD}
\end{align}

The snake lemma applied to \eqref{commutative diagram2_multi factor} yields a connecting isomorphism $\theta\colon K\to K$. 
Let $s\in \omega_{\tA_1 \times_K \tA_2}$ be such that $\xi(s)=\theta(1)$. Then $\chi(\eta(s))=\nu(1)$ can be identified with $(\tau_{A_1},\tau_{A_2})\in A_1\oplus A_2$, that is, $\eta(s)$ is equivalent to $(\tau_{A_1},\tau_{A_2})$ modulo the image of $\omega_{\tA_1}\oplus\omega_{\tA_2}$.

We want to compare the image of 
\[
\eta[-d]\colon \omega_{\tA_1\times_K\tA_2}(-d)\to \tA_1\oplus \tA_2
\]
and the kernel of the map $\mu$ from Diagram \eqref{commutative diagram_multi factor}. 
The image of $\eta[-d]$ is trivial in degree zero by Equation \eqref{eq:can mod in low degree_multi factor}. Since $K$ is concentrated in degree zero, the map $\mu$ has zero image in every degree other than zero. It follows that the image of $\eta[-d]$ is contained in $\ker \mu = \im \sigma \cong  \tA_1 \times_K \tA_2$. Hence $\eta[-d]$ induces an injective graded $Q$-module homomorphism 
\[
\delta \colon \omega_{\tA_1 \times_K \tA_2} (- d) \to \tA_1 \times_K \tA_2. 
\]
Its existence proves that  $\omega_{\tA_1 \times_K \tA_2} (-d)$ can be identified with an ideal of $\tA_1 \times_K \tA_2$.

The following diagram combines the left column of Diagram \eqref{commutative diagram_multi factor} and the top row of \eqref{commutative diagram2_multi factor}. By previous considerations indicating that $\delta(s)=\eta(s)$ is equivalent to $(\tau_{A_1},\tau_{A_2})$ modulo the image of $\omega_{\tA_1}\oplus\omega_{\tA_2}$, the diagram commutes provided that $\xi(s)$ is mapped by $\tau$ to $(\tau_{A_1}, \tau_{A_2})\in A_1\times_K A_2$. With this choice, the cokernel of $\tau$ is $A_1\#_K A_2$ by \Cref{Def_CS}.

\begin{align*}
     \label{commutative diagram3_multi factor}
\minCDarrowwidth20pt
\small{
\begin{CD}
@. 0 @. 0 @.  @. \\
@. @VVV  @VVV   @.    \\
@. (\omega_{\tA_1} \oplus \omega_{\tA_1})(-d) @>=>> (\omega_{\tA_1} \oplus \omega_{\tA_2})(-d) @.\\
@. @VVV   @VVV     \\
0 @>>>  \omega_{\tA_1 \times_K \tA_2}(-d)  @>>{\delta}>  \tA_1\times_K \tA_2 @>>>  C @>>>  0 \\
 @. @VV{\xi}V   @VVV   @. \\
0 @>>> K(-d)  @>>\tau >A_1\times_K A_2 @>>> A_1\#_K A_2 @>>> 0 \\
@.    @VVV @VVV    \\
@.  0 @. 0 \\
\end{CD}
}
\end{align*} 
Setting $C$ be the cokernel of $\delta$, the snake lemma yields an isomorphism $C\cong A_1\#_KA_2$. This shows that $A_1\#_K A_2$ is a doubling of $\tA_1\times_K \tA_2$, as desired for the base case of induction. 

Now, we assume that the AG $K$-algebra $A_1\#_K \cdots \#_K A_{r-1}$ is a doubling of $\tA_1\times_K\cdots \times_K\tA_{r-1}$. The base case applied to AG $K$-algebras $A_1\#_K \cdots \#_K A_{r-1}$ and $A_r$ implies by way of Remarks \ref{rem:multi-factor FP as iterated FP} and \ref{rem:multi-factor CS as iterated CS} that $A_1\#_K \cdots \#_K A_{r}$ is a doubling of $\tA_1\times_K\cdots \times_K\tA_r$ completing the proof.
\end{proof}

\Cref{thm:doubling} generalizes \cite[Theorem 5.5]{CLW}, which considered the case of AG algebras $A_1, \ldots, A_r$ of embedding dimension one, establishing an analogous doubling result.

\bigskip

% new bibliography from refs.bib file
\bibliographystyle{alpha} % We choose the "plain" reference style
\bibliography{refs} % Entries are in the refs.bib file

\newcommand{\etalchar}[1]{$^{#1}$}
\begin{thebibliography}{KMMR{\etalchar{+}}01}

\bibitem[AAM12]{AAM}
H.~Ananthnarayan, L.~Avramov, and W.~F. Moore.
\newblock Connected sums of {G}orenstein local rings.
\newblock {\em J. Reine Angew. Math.}, 667:149--176, 2012.

\bibitem[BH93]{BV}
W.~Bruns and J.~Herzog.
\newblock {\em Cohen-{M}acaulay rings}, volume~39 of {\em Cambridge Studies in
  Advanced Mathematics}.
\newblock Cambridge University Press, Cambridge, 1993.

\bibitem[CGS23]{CGS}
E.~Celikbas, H.~Geller, and T.~Se.
\newblock Classifying betti numbers of fiber products, arXiv:2307.05715, 2023.

\bibitem[CLW19]{CLW}
E.~Celikbas, J.~Laxmi, and J.~Weyman.
\newblock Embeddings of canonical modules and resolutions of connected sums.
\newblock {\em J. Pure Appl. Algebra}, 223(1):175--187, 2019.

\bibitem[CN09]{CN}
A.~Corso and U.~Nagel.
\newblock Monomial and toric ideals associated to {F}errers graphs.
\newblock {\em Trans. Amer. Math. Soc.}, 361(3):1371--1395, 2009.

\bibitem[Gel22]{G}
H.~Geller.
\newblock Minimal free resolutions of fiber products.
\newblock {\em Proc. Amer. Math. Soc.}, 150(10):4159--4172, 2022.

\bibitem[IK99]{IK}
A.~Iarrobino and V.~Kanev.
\newblock {\em Power sums, {G}orenstein algebras, and determinantal loci},
  volume 1721 of {\em Lecture Notes in Mathematics}.
\newblock Springer-Verlag, Berlin, 1999.
\newblock Appendix C by Iarrobino and Steven L. Kleiman.

\bibitem[IMS22]{IMS}
A.~Iarrobino, C.~McDaniel, and A.~Seceleanu.
\newblock Connected sums of graded {A}rtinian {G}orenstein algebras and
  {L}efschetz properties.
\newblock {\em J. Pure Appl. Algebra}, 226(1):Paper No. 106787, 52, 2022.

\bibitem[KKR{\etalchar{+}}21]{KKRSSY}
G.~Kapustka, M.~Kapustka, K.~Ranestad, H.~Schenck, M.~Stillman, and B.~Yuan.
\newblock Quaternary quartic forms and {G}orenstein rings, arXiv:2111.05817,
  2021.

\bibitem[KMMR{\etalchar{+}}01]{KMMNP}
J.~O. Kleppe, J.~C. Migliore, R.~M. Mir\'{o}-Roig, U.~Nagel, and C.~Peterson.
\newblock Gorenstein liaison, complete intersection liaison invariants and
  unobstructedness.
\newblock {\em Mem. Amer. Math. Soc.}, 154(732):viii+116, 2001.

\bibitem[Mac94]{Macaulay}
F.~S. Macaulay.
\newblock {\em The algebraic theory of modular systems}.
\newblock Cambridge Mathematical Library. Cambridge University Press,
  Cambridge, 1994.
\newblock Revised reprint of the 1916 original, With an introduction by Paul
  Roberts.

\bibitem[Mas91]{Massey}
W.~S. Massey.
\newblock {\em A basic course in algebraic topology}, volume 127 of {\em
  Graduate Texts in Mathematics}.
\newblock Springer-Verlag, New York, 1991.

\bibitem[Sal79]{S}
J.~D. Sally.
\newblock Stretched {G}orenstein rings.
\newblock {\em J. London Math. Soc. (2)}, 20(1):19--26, 1979.

\end{thebibliography}


\begin{thebibliography}{99}


\bibitem[AAM]{AAM} H. Ananthnarayan, L. L. Avramov, W. F. Moore: \textit{Connected sums of Gorenstein local rings.} J. Reine Angew. Math. {\bf 667} (2012), 149--176. 

\bibitem[ACLY]{ACLY} H. Ananthnarayan, E. Celikbas, J. Laxmi, Z. Yang: {\em Decomposing Gorenstein rings as connected sums.} J. Algebra 527 (2019), 241--263. 


\bibitem[BH]{BV} W. Bruns and J. Herzog, Cohen-Macaulay rings, Cambridge studies in advanced mathematics, vol. 39,
Cambridge, 1993

\bibitem[CLW]{CLW} E. Celikbas, J. Laxmi and J. Weyman, {\em Embeddings of canonical modules and resolutions of connected sums
}

\bibitem[CGS]{CGS} E. Celikbas, H. Geller, T. Se, {\em 
CLASSIFYING BETTI NUMBERS OF FIBER PRODUCTS} arXiv:2307.05715v1.


\bibitem[CN]{CN} 
A. Corso and U. Nagel, 
{\em Monomial and toric ideals associated to
Ferrers graphs} (with A.\ Corso), Trans.\ Amer.\ Math.\ Soc.\
{\bf 361} (2009), 1371--1395.

\bibitem{Ei}
D.~Eisenbud, 
{\em Commutative algebra with a view toward algebraic geometry.} 
Graduate Texts in Mathematics, 150. Springer-Verlag, New York, 1995.

\bibitem[G]{G}
H. Geller,
{\em Minimal free resolutions of fiber products},
Proc.\ Amer.\ Math.\ Soc.\ 
{\bf 150} (2022), no.\ 10, 4159--4172.



\bibitem[H94]{H94} R. Hartshorne, Generalized divisors on Gorenstein schemes, K-Theory {\bf 8} (1994), 287--339.

\bibitem[KKRSSY]{KKRSSY} G. Kapustka, M. Kapustka, K. Ranestad, H. Schenck, M. Stillman, and B. Yuan. (2021). {\em Quaternary quartic forms and Gorenstein rings. } https://doi.org/10.48550/arXiv.2111.05817

\bibitem[KMMNP]{KMMNP} 
J.\ Kleppe, R.\  Mir\'o-Roig, J.\ Migliore, U.\ Nagel and  C.\ Peterson {\em Gorenstein liaison, complete intersection
liaison invariants and unobstructedness}, Mem.\ Amer.\ Math.\
  Soc.\ {\bf 154} (2001), no.\ 732.

\bibitem[IMS]{IMS} A. Iarrobino, C. Mc Daniel, A.Seceleanu, {\em Connected Sums of Graded Artinian Gorenstein
Algebras and Lefschetz Properties.}

\bibitem[M]{M} M.G. Marinari, {\em Gorenstein Sequences and $G_n$ Condition}, J.  Algebra {\bf 39} (1976), 349--359.

\bibitem[S]{S}
J. Sally, {\em Stretched Gorenstein rings}, J. London Math. Soc. {\bf 20} (1979) 19--26. 

\end{thebibliography}

% original bibliography below
\begin{comment}

\end{comment}

\end{document}